\documentclass{amsart}
\usepackage[a4paper,margin=2.5cm, marginparwidth=1.8cm,
            marginparsep=0.2cm]{geometry}
 
\usepackage[T1]{fontenc}

\usepackage{amssymb, amsmath, amsthm}
\usepackage{enumitem}
\usepackage{mathtools}

\usepackage[dvipsnames]{xcolor}
\usepackage[colorlinks=true]{hyperref}

\usepackage[textsize=tiny]{todonotes}

\usepackage{url}

\usepackage[backend=bibtex,style=alphabetic,sorting=nyt,isbn=false,url=false,doi=true,maxalphanames=10,minalphanames=4,mincitenames=4,maxcitenames=10,minnames=4,maxnames=10,giveninits=true,maxbibnames=99]{biblatex}

\AtEveryBibitem{\clearlist{language}}
\renewbibmacro{in:}{}

\theoremstyle{plain}
\newtheorem{theorem}{Theorem}[section]
\newtheorem{conjecture}[theorem]{Conjecture}
\newtheorem{proposition}[theorem]{Proposition}
\newtheorem{lemma}[theorem]{Lemma}
\newtheorem{corollary}[theorem]{Corollary}

\theoremstyle{definition}
\newtheorem{definition}[theorem]{Definition}
\newtheorem{example}[theorem]{Example}
\newtheorem{remark}[theorem]{Remark}

\newcommand{\SRT}{\mathrm{SRT}}
\newcommand{\ZZ}{\mathbb{Z}}
\newcommand{\QQ}{\mathbb{Q}}

\def\oM{\overline{\mathcal{M}}}

\def\ch{{\mathrm{ch}}}
\def\calS{\mathcal{S}}

\newcommand{\Coeff}[1]{\mathop{\mathrm{Coeff}}\nolimits_{\left[#1\right]}}

\title[Triple Hodge integrals and constant Poisson brackets]{Triple Hodge integrals and constant Poisson brackets for rank-one Dubrovin-Zhang hierarchies}

\author{Xavier Blot}
\address{X.~B.: Korteweg--de Vriesinstituut voor Wiskunde, Universiteit van Amsterdam, Postbus 94248, 1090GE Amsterdam, Nederland}
\email{x.j.c.v.blot@uva.nl}	

\author{Guido Carlet}
\address
{G.~C.: Université Bourgogne Europe, CNRS, IMB UMR 5584, 21000 Dijon, France}
\email{guido.carlet@ube.fr}

\author{Dimitrios Makris}
\address
{D.~M.: Université Bourgogne Europe, CNRS, IMB UMR 5584, 21000 Dijon, France}
\email{dimitrios.makris@ube.fr}

\author{Sergey Shadrin}
\address{S.~S.: Korteweg--de Vriesinstituut voor Wiskunde, Universiteit van Amsterdam, Postbus 94248, 1090GE Amsterdam, Nederland}
\email{s.shadrin@uva.nl}

\begin{document}

\begin{abstract} We prove a conjecture of Dubrovin, Liu, Yang, and Zhang that states that the Poisson bracket of the Dubrovin-Zhang hierarchy of a rank-one cohomological field theory is constant if and only if it is given by the triple Hodge class with the parameters satisfying the Calabi-Yau condition. 
\end{abstract}

\maketitle

\tableofcontents

\section{Introduction}

The theory of Dubrovin and Zhang~\cite{dubrovin2001normalformshierarchiesintegrable} can be extended in order to associate a canonical hierarchy of Hamiltonian evolutionary PDEs to a (partial) cohomological field theory with a unit, see~\cite{BPS1,DLYZ-Hodge-conjecture,BLS-DRDZ,BSS25,blot2026descendantsintegrableobservablescohomological}. 

The cohomological field theories with a unit of rank one have been fully classified by Teleman~\cite{Teleman} and can be described in terms of the so-called $\lambda$ classes, the Chern classes of the Hodge bundles on the moduli spaces of curves~\cite{Mumford}. The Dubrovin-Zhang theory in this particular case has been developed in~\cite{DLYZ-Hodge-conjecture}, where, in particular, the authors formulated a conjecture on the conditions that one has to impose to guarantee that the Poisson bracket of the Dubrovin-Zhang hierarchy is constant in the dependent variables, see~\cite[Example~4.6]{DLYZ-Hodge-conjecture}. The purpose of this paper is to prove this conjecture.

The condition for the Poisson bracket to be constant conjectured in~\cite[Example~4.6]{DLYZ-Hodge-conjecture} is that the underlying cohomological field theory is given by the triple Hodge classes with the parameters satisfying the Calabi-Yau condition. It is indeed a very special family of examples, distinguished, in particular, by KP integrability~\cite{Alexandrov}. Moreover, it is closely related to the theory of the topological vertex~\cite{LLLZ} and is governed by the so-called Mari\~{n}o-Vafa formula~\cite{MarinoVafa}, see e.~g. \cite{LiuLiuZhou,OP,JianZhou,Kramer} and the references therein.
In this case, the Dubrovin-Zhang hierarchy is shown in~\cite{CubicHodge-Loop,Hodge-FVH} to be equivalent to the so-called fractional Volterra hierarchy, defined in~\cite{FVH-definition} and further linked to KP integrability in~\cite{Takasaki}. The Poisson bracket  can then be described explicitly, and it is indeed constant as it was conjectured in~\cite[Example~4.6]{DLYZ-Hodge-conjecture}. The remaining open question is whether these are the only cases when the Poisson bracket is constant, and its affirmative answer is precisely the main result of this paper. 

The idea of the proof comes from the so-called DR/DZ correspondence. Namely, there is an alternative approach to the construction of a Hamiltonian system of flows associated with a (partial) cohomological field theory, the so-called DR hierarchy, originally invented by Buryak~\cite{Buryak-DR} and further developed in~\cite{Buryak-Rossi-recursion,BDGR1,BDGR20}. The two constructions of integrable hierarchies are known to be related by Miura transformations~\cite{BS22,BLS-DRDZ,BSS25,blot2026descendantsintegrableobservablescohomological} and this allows to transfer various properties of the structures of the hierarchies on the Dubrovin-Zhang side to the DR side and vice versa. 
In particular, the Poisson structure on the DR side is just $\partial_x$, and we use the formulas for the Miura transformations given in~\cite{BS22,blot2026descendantsintegrableobservablescohomological} to analyze the bracket on the Dubrovin-Zhang side. 

\subsection{Organization of the paper}

In Sec.~\ref{Sec:Hodge classes} we recall how rank-one cohomological field theories are expressed in terms of Hodge classes and discuss various ways of parametrizing these CohFTs in terms of the symmetric functions. 
In Sec.~\ref{Sec: Conjecture} we recall the conjecture of~\cite[Example~4.6]{DLYZ-Hodge-conjecture} and discuss the part of this conjecture concerning the explicit form of the  Poisson bracket in the constant case; this has been already proved in~\cite{Hodge-FVH} via a relation of the corresponding Dubrovin-Zhang hierarchies with the fractional Volterra hierarchy defined in~\cite{FVH-definition}. 
In Sec.~\ref{Sec: General Miura} we recall and use the results of~\cite{BS22,blot2026descendantsintegrableobservablescohomological} to obtain an explicit formula for the Poisson bracket of the Dubrovin-Zhang hierarchy of general rank $1$ CohFT. 
Finally, in Sec.~\ref{Sec: obstructions to constancy} we analyze an explicit family of coefficients of non-constant terms in the Poisson bracket, which provide enough equations to derive the conjectured conditions for the constancy of the Poisson bracket.

\subsection{Acknowledgements}

X.~B. and S.~S. were supported by the Dutch Research Council, project OCENW.M.21.233.
G.~C. and D.~M. were supported by the EIPHI Graduate School, contract ``ANR-17-EURE-0002'', grant ``METTHHOD''. 

\section{Hodge classes} \label{Sec:Hodge classes}

Consider the Hodge bundle over $\oM_g$~\cite{Mumford}. Its Chern classes are denoted by $\lambda_i$, $i=0,\dots,g$, $\lambda_0=1$, and the full Chern class is denoted by $\Lambda(a) \coloneqq \sum_{i=0}^g \lambda_i a^i$. Let $\ch_i$, $i\geq 0$, be the corresponding Chern characters. Mumford's relations imply that $\Lambda(a) \Lambda(-a) = 1$, $\ch_{2i} = 0$ for $i\geq 1$, and $\ch_{2h-1} = 0$ for $h>g$. Abusing notation, we use the same symbols for the $\lambda_i$ and $\ch_i$ on any $\oM_g$, as well as for their pull-backs to $\oM_{g,n}$, $n\geq 0$, under the map that forgets the marked points. Genera $0$ and $1$ are exceptional since we have to define the classes $\lambda_i$ and $\ch_i$ on $\oM_{0,3}$ and $\oM_{1,1}$, respectively, and then pull-back them to $\oM_{0,n}$, $n\geq 3$, and $\oM_{1,n}$, $n\geq 1$, respectively. 

It is proved in~\cite{Teleman} that all CohFTs of rank one with a flat unit of norm one take the form 
\begin{align} \label{eq:classes-CohFT}
	\mathrm{Cl}_{g,n} = \exp\Big(\sum_{i=1}^\infty (-1)^{i-1} (i-1)! p_i \ch_i\Big),
\end{align}
 where for each $g$ and $n$, $g\geq 0$, $n\geq 0$, $2g-2+n>0$, the latter class is considered on the respective $\oM_{g,n}$.  It can be conveniently rewritten as $\mathrm{Cl}_{g,n} = \prod_{i=1}^\infty \Lambda(r_i)$, where $p_i$, $i\geq 1$, is the power sum function in the ring of the symmetric functions in the variables $r_k$, $k\geq 1$. The infinite product $\prod_{i=1}^\infty \Lambda(r_i)$ has to be considered in the same ring as an expression in the elementary symmetric functions $e_i$, $i\geq 1$ in the variables $r_k$, $k\geq 1$.

Note that since $\ch_i=0$ for even $i$, we have a freedom to choose $p_i$, $i$ even, without affecting the classes $\mathrm{Cl}_{g,n}$ defined in~\eqref{eq:classes-CohFT}. It is convenient for us to consider $p_{2h-1}$, $h\geq 1$, as independent parameters and choose $p_{2h}$, $h\geq 1$, as unique polynomials in $p_{2h-1}$, $h\geq 1$, such that $e_{2k}=0$ for all $k\geq 1$. Explicitly, the corresponding specialization of Newton's identities reads
\begin{align} \label{eq:Newton}
\sum_{k=1}^\infty {e_{2k-1}} t^{2k-1} & = \tanh \sum_{h=1}^\infty \frac{p_{2h-1}}{2h-1} t^{2h-1};
& \sum_{h=1}^\infty \frac{p_{2h}}{2h} t^{2h} & = \log \cosh \sum_{h=1}^\infty \frac{p_{2h-1}}{2h-1} t^{2h-1}.
\end{align}
Alternatively we can consider $e_{2k-1}$, $k\geq 1$, as independent parameters and use these formulas to express all $p_i$, $i\geq 1$, ensuring that $e_{2k}=0$, $k\geq 1$.

By definition~\eqref{eq:classes-CohFT}, the homogeneous components of $\mathrm{Cl}_{g,n}$ in the Chow degree are homogeneous of the same degree in the natural degree $\deg_e$ associated to the variables $e_i$ as generators of the ring of symmetric functions, namely $\deg_e e_i =i$, hence any monomial in $\mathrm{Cl}_{g,n}$ has polynomial dependence in a finite number of variables $e_{2k-1}$, $k\geq1$.  

A natural constraint that we can impose on $\mathrm{Cl}_{g,n}$ is that $e_i =0$ for $i \geq 2m$ for some fixed $m>0$. In such case the CohFT depends only on the parameters $e_1, e_3, \dots, e_{2m-1}$. When expressing the variables $e_i$ and $p_i$ respectively as symmetric functions and power sums in $r_1, r_2, \dots$, the constraint above is equivalent to $r_i = 0$ for $i \geq 2m$ together with $e_{2i}(r_1, \dots, r_{2m-1}) =0$ for $i=1, \dots, m-1$. We have therefore that $\mathrm{Cl}_{g,n} = \prod_{i=1}^{2m-1} \Lambda(r_i)$. 

The case $m=1$ corresponds to the linear Hodge integrals, while for $m=2$ we have the important case of the triple Hodge integrals $\mathrm{Cl}_{g,n} = \Lambda(r_1)\Lambda(r_2)\Lambda(r_3)$ with the constraint $e_2(r_1, r_2, r_3)=0$ where $e_2=r_1r_2+r_1r_3+r_2r_3$. If $e_3 = r_1r_2r_3$ is non-zero, one can present the latter constraint as the Calabi-Yau condition $r_1^{-1} + r_2^{-1} + r_3^{-1}=0$. If $e_3$ vanishes this case reduces to $m=1$ case. 

\section{Conjecture on constancy of the Poisson bracket} \label{Sec: Conjecture}

The following conjecture is posed in~\cite{DLYZ-Hodge-conjecture}. Consider the Dubrovin-Zhang hierarchy associated to the cohomological field theory~\eqref{eq:classes-CohFT}. Let its dependent variable be denoted by $w$, and let $w_d = \partial_x^d w$, $d\geq 0$, be the corresponding derivatives with respect to the spatial variable $x$. Its Poisson structure is given by the operator
\begin{align}
	\mathrm{P} = \partial_x + \sum_{g=1}^\infty \epsilon^{2g} \sum_{i=0}^{2g+1} P_{g,i}(w_0,w_1,w_2,\dots) \partial_x^{i},
\end{align}
where $P_{g,i}\in (\mathcal{A}_w)_{2g+1-i}$. Here $\mathcal{A}_w\coloneqq \mathbb{C}[[w]][w_1,w_2,w_3\dots]$ is the graded ring of differential polynomials in $w$, equipped with the grading given by the differential degree defined by $\deg w = 0$ and $\deg w_i = i$, $i\geq 1$, and by $(\mathcal{A}_w)_{d}$ we denote its subspace of differential degree $d$. As discussed above, $\mathrm{P}$ depends on the parameters $p_{2h-1}$, $h\geq 1$, or, via the specialization of the Newton identities given in~\eqref{eq:Newton}, on $e_{2k-1}$, $k\geq 1$. The results of~\cite{BPS1,BPS-2} imply that $P_{g,0}=0$ for all $g\geq 1$.

\begin{conjecture} \label{conj:DLYZ} (1) The operator $\mathrm{P}$ is constant in $w_0,w_1,w_2,\dots$ if and only if $e_{2h-1} = 0$ for $h\geq 3$. In other words, the cohomological field theory~\eqref{eq:classes-CohFT} can be represented as $\mathrm{Cl}_{g,n} = \Lambda(r_1)\Lambda(r_2)\Lambda(r_3)$, where $e_1=r_1+r_2+r_3$ and $e_3 = r_1r_2r_3$ are arbitrary and $e_2=r_1r_2+r_1r_3+r_2r_3$ vanishes. If $r_1,r_2,r_3\not=0$, one can present the latter condition as the Calabi-Yau condition $r_1^{-1} + r_2^{-1} + r_3^{-1}=0$. 
	
(2) Moreover, for the case $\mathrm{Cl}_{g,n} = \Lambda(r_1)\Lambda(r_2)\Lambda(r_3)$ with $r_1,r_2,r_3$ satisfying the Calabi-Yau condition, the Poisson bracket is given explicitly by
\begin{align} \label{eq:bracket-constant}
	\mathrm{P} = \frac{\partial_x}{\calS\Big(\sqrt{\frac{r_1r_2}{r_3}}\epsilon \partial_x\Big)\calS\Big(\sqrt{\frac{r_2r_3}{r_1}}\epsilon \partial_x\Big)\calS\Big(\sqrt{\frac{r_3r_1}{r_2}}\epsilon \partial_x\Big)},
\end{align}
where $\calS(z)= z^{-1}(e^{z/2} - e^{-z/2})$.
\end{conjecture}

Our main result is that this conjecture holds, see Thm.~\ref{thm:main} below. Note that the integrable hierarchy associated to the triple Hodge integrals satisfying the Calabi-Yau condition is studied in~\cite{Hodge-FVH}, where it is related to the fractional Volterra hierarchy (FVH) defined in~\cite{FVH-definition}. See also~\cite{CubicHodge-Loop} for an important intermediate step, and~\cite{Hodge-GUE} for a special case. In particular, the bracket~\eqref{eq:bracket-constant} indeed follows from the Poisson structure of FVH. However, FVH controls not the natural flows of the Dubrovin-Zhang hierarchy but rather their infinite linear combinations, and considering that there are further differences in formulation of the hierarchy, some comment is still needed. To this end, we have: 

\begin{proposition} \label{prop:FVH} Consider the cohomological field theory $\{\mathrm{Cl}_{g,n} = \Lambda(r_1)\Lambda(r_2)\Lambda(r_3)\}$, where $e_1=r_1+r_2+r_3$ and $e_3 = r_1r_2r_3$ are arbitrary and $e_2=r_1r_2+r_1r_3+r_2r_3$ vanishes. Then the Poisson bracket of its Dubrovin-Zhang hierarchy is given by Eq.~\eqref{eq:bracket-constant}.
\end{proposition}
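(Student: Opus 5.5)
The plan is to deduce Proposition~\ref{prop:FVH} from the results of~\cite{Hodge-FVH} (building on~\cite{CubicHodge-Loop}) by keeping track of how the Poisson operator changes under the identifications used there. The argument has three steps.

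First, I would recall the precise statement of~\cite{Hodge-FVH}. For the triple Hodge CohFT with $e_2=0$, the Dubrovin--Zhang hierarchy (in the normal coordinate $w$, i.e.\ the second derivative of the genus-expanded free energy) is related to the fractional Volterra hierarchy by an explicit change of variables. This change of variables is a rescaling of $x$ and $\epsilon$ together with a linear, $x$-shift-type transformation $w \mapsto$ (difference operator applied to $w$), involving operators of the form $\calS(a\epsilon\partial_x)$. Moreover, the DZ flows are expressed as, possibly infinite, linear combinations of the FVH flows. The FVH comes with a Hamiltonian structure given by a constant difference operator, of the form $\partial_x$ times a product of ratios of shift operators in $\epsilon\partial_x$.

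Second, I would transport this structure. A Poisson operator transforms under a Miura-type change $w\mapsto \tilde w = L(\epsilon\partial_x) w$, with $L$ a constant-coefficient series, as $\tilde{\mathrm P} = L\,\mathrm P\,L^*$, where $L^*(\epsilon\partial_x)=L(-\epsilon\partial_x)$. Since all operators involved are constant-coefficient series in $\epsilon\partial_x$, they commute, and the resulting operator is again constant. I would then check that the resulting product equals $\partial_x\prod_{\text{cyc}}\calS\big(\sqrt{r_ir_j/r_k}\,\epsilon\partial_x\big)^{-1}$. This uses the evenness of $\calS$ and the relation between the FVH parameters and $r_1,r_2,r_3$ under the Calabi--Yau condition; in particular the three shifts $\sqrt{r_ir_j/r_k}$ have squares $r_ir_j/r_k$ whose pairwise products are the $r_i^2$.

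Third, I would justify that this transported operator really is \emph{the} Poisson operator of the DZ hierarchy, as opposed to merely some Poisson operator for which the flows are Hamiltonian. For this I would use uniqueness. The DZ bracket is characterised by two properties: its genus-zero part is $\partial_x$, and it is obtained from $\partial_x$ by the canonical quasi-Miura transformation, with the DZ Hamiltonians being the canonical ones, which are fixed by the tau-structure and the string equation. So I would show that the natural DZ flows, not only their infinite combinations, are Hamiltonian with respect to the transported bracket, with Hamiltonian densities matching the DZ densities. The tool here is that FVH controls a generating family of flows whose coefficients, expanded in the spectral parameter, recover the DZ flows triangularly, so that the infinite combinations can be inverted. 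Once the flows and Hamiltonians agree, the Poisson operator is determined by them: $\partial_{t}w=\mathrm P\,\delta \bar h/\delta w$ for the flow $\bar h=\int w^2/2+\dots$, together with the dispersionless limit, pins down $\mathrm P$.

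I expect the main obstacle to be this third step: reconciling the formulation differences, namely the infinite linear combinations of flows and the different normalisations of the dependent variable and of $\epsilon$. What I would try first is to argue through the first nontrivial flow ($\partial_{t_0}=\partial_x$ and the KdV-type flow $t_1$), and use the fact that in rank one the DZ Poisson operator is uniquely determined by the condition that $\partial_{t_1}$ is Hamiltonian with its canonical Hamiltonian, since the Hamiltonian operator for a given nondegenerate scalar flow with fixed leading term is unique up to the trivial freedom already fixed by the genus-zero limit.
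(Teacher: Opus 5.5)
Your Steps 1 and 2 match the notation-matching part of the paper's proof. The paper also rescales to $\tilde\epsilon=\sqrt{r_1r_2r_3}\,\epsilon$, identifies the FVH variable with $\calS(r_2^{-1}\tilde\epsilon\partial_x)\calS(r_3^{-1}\tilde\epsilon\partial_x)$ applied to the DZ variable, and conjugates the constant FVH operator.

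The gap is in Step 3, which you rightly flag as the crux. Your plan needs the individual DZ flows and their Hamiltonians inside the FVH picture, and you propose to obtain them by inverting the infinite linear combinations of flows ``triangularly''. Nothing in~\cite{Hodge-FVH,CubicHodge-Loop} provides such an inversion. The paper explicitly treats the passage from DZ times to FVH times as a \emph{non-invertible} change of time variables, so this step has no support. The uniqueness statement you then invoke is also unproven. For a single flow, $\mathrm{P}(\delta\bar h/\delta w)=\partial_t w$ is one linear identity at each order in $\epsilon$ for the unknown coefficients $P_{g,i}$, $i=1,\dots,2g+1$. You give no argument (Jacobi identity, a rigidity theorem, or anything else) that this identity plus the dispersionless limit singles out $\mathrm{P}$. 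The claim of uniqueness ``up to the trivial freedom'' is an assertion, not a proof.

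The missing idea is that you never need to compare flows or Hamiltonians at all. You already wrote down the right characterization: the DZ bracket is the image of $\partial_x$ under the quasi-Miura transformation determined by the genus expansion of the free energy. The paper uses exactly this.
\begin{itemize}
\item Lemma~3.9 of~\cite{Hodge-FVH} obtains the FVH bracket as the push-forward of the dispersionless bracket under the quasi-Miura transformation extracted from the topological tau function of FVH.
\item That tau function satisfies the Virasoro constraints (Sec.~4 of~\cite{Hodge-FVH}), hence the DZ loop equation of~\cite{CubicHodge-Loop}.
\item This loop equation is derived only from infinite linear combinations of the Virasoro constraints, so it is formally weaker, but it still has a unique solution.
\end{itemize}
Hence that quasi-Miura transformation is the DZ one, and the resulting bracket is the DZ bracket, regardless of the infinite recombination of flows.

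You also omit a second point. The correspondence of~\cite{Hodge-FVH} is established only for rational values of the parameters, so your Step 2 does not cover arbitrary $e_1,e_3$. The paper closes this by noting that all ingredients depend polynomially on $e_1,e_3$ (after a rescaling), so it suffices to derive~\eqref{eq:bracket-constant} for rational values.
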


\begin{proof} The proof is essentially a short guide to the logic presented in~\cite{Hodge-FVH} and \cite{CubicHodge-Loop}, the papers  that contain the required statement. Basically, we have to explain two things: (1) why we can use the bracket of the FVH, which is obtained from the Dubrovin-Zhang hierarchy via a non-invertible change of time variables, and (2) how the notation and conventions match the ones we use in the present paper.  

All ingredients of the integrable system depend algebraically (polynomially after some rescaling) on $r_1,r_2,r_3$, or, more precisely, on $e_1,e_3$, thus it is enough to derive the bracket for the rational values of these parameters. This is done in~\cite[Lemma~3.9]{Hodge-FVH}, up to some renormalization and change of notation that we discuss below. 

Note that \cite[Lemma~3.9]{Hodge-FVH} states that the bracket of the FVH is derived via a quasi-Miura transformation of the bracket of its dispersionless limit, which is, strictly speaking, not quite the same as the Riemann hierarchy. The difference is that instead of the standard flows and times of the Dubrovin-Zhang hierarchy in \emph{op.~cit.} and in~\cite{CubicHodge-Loop} the authors use their infinite linear combinations. In particular, the Dubrovin-Zhang loop equation in~\cite{CubicHodge-Loop} is obtained from the infinite linear combinations of the standard Virasoro constraints (cf.~\cite[Eq.~(2.4)]{CubicHodge-Loop}), so formally it is a weaker system of equations. However, as it is pointed out in~\cite{CubicHodge-Loop}, this doesn't affect the Dubrovin-Zhang loop equation~\cite[Thm.~3.9]{CubicHodge-Loop} and still gives its unique solution, cf.~\cite[Prop.~3.9]{CubicHodge-Loop}. Thus, the quasi-Miura transformation used in \cite[Lemma~3.9]{Hodge-FVH} (it is derived from the topological tau function of FVH, which does satisfy the Virasoro constraints according to \cite[Sec.~4]{Hodge-FVH}) is exactly the one that connects the Dubrovin-Zhang hierarchy of $\{\mathrm{Cl}_{g,n} = \Lambda(r_1)\Lambda(r_2)\Lambda(r_3)\}$ for rational $r_1,r_2,r_3$ to its dispersionless limit, and the obtained bracket is indeed the bracket of the Dubrovin-Zhang hierarchy. 

Now we match the formulas explicitly, in our notation. 
First, notice that in~\cite{Hodge-FVH} the parameter $\epsilon$ in the tau function is rescaled and one has to work with $\tilde \epsilon = \sqrt{r_1r_2r_3} \epsilon$. With this change of convention, $\mathrm{P} = S(r_1^{-1}\tilde \epsilon\partial_x)^{-1}S(r_2^{-1}\tilde \epsilon\partial_x)^{-1}S(r_3^{-1}\tilde \epsilon\partial_x)^{-1}\partial_x$. Then, the dependent coordinate used in~\cite{Hodge-FVH} is equal to
$\tilde u = S(r_2^{-1}\tilde \epsilon\partial_x) S(r_3^{-1}\tilde \epsilon\partial_x) u$ (up to an overall constant factor that we eliminate to preserve the dispersionless limit to be the Riemann hierarchy). The Miura transformed bracket $\mathrm{P}$ is then given by $S(r_1^{-1}\tilde \epsilon\partial_x)^{-1}S(r_2^{-1}\tilde \epsilon\partial_x) S(r_3^{-1}\tilde \epsilon\partial_x)\partial_x$, which is exactly the one derived in~\cite[Lemma~3.9]{Hodge-FVH} (again, up to an overall constant factor that we eliminated for the same reason as above).
\end{proof}

\begin{remark} Once we fix $e_1$ and $e_3$ and assume that all other $e_{2k-1}=0$ for $k\geq 3$, the parameters $r_1,r_2,r_3$ are reconstructed as the roots of the equation $x^3-e_1x-e_3=0$. In the limit $e_3\to 0$ two roots, say $r_2$ and $r_3$, tend to zero, and $r_1$ tend to $e_1$. Thus, with fixed $e_1$ and all other $e_{2k-1}=0$ for $k\geq 2$, the classes $\mathrm{Cl}_{g,n}$ are equal to $\Lambda(-r)$, $r=-e_1$. In this limit, Eq.~\eqref{eq:bracket-constant} takes the following form:
\begin{align} \label{eq:bracket-ILW}
	\mathrm{P} = \frac{\partial_x}{\calS(\sqrt{r}\epsilon \partial_x)^2},
\end{align}
which is the bracket of a form of the intermediate long wave hierarchy, proved to be the Dubrovin-Zhang hierarchy for $\{\mathrm{Cl}_{g,n} = \Lambda(-r)\}$ in~\cite{Buryak-ILW}.

The further limit $r\to 0$ reduces the cohomological field theory to the trivial one, and in this case the Poisson bracket degenerates further to $\mathrm{P} = \partial_x$, which is the bracket of the KdV hierarchy, conjectured in~\cite{Witten-KdV} and proved in~\cite{Kontsevich-KdV} (see also~\cite[Sec.~6]{blot2026descendantsintegrableobservablescohomological} for a new proof and a small survey of existing proofs).
\end{remark}

\section{General formula for the Poisson bracket} \label{Sec: General Miura}

In this Section we use the results of~\cite{blot2026descendantsintegrableobservablescohomological}, based on~\cite{BS22,BLRS,blot2024rootedtreeslevelstructures,BLS-DRDZ,BSS25}, to present an explicit formula for $\mathrm{P}$ in the case of arbitrary choice of parameters $e_{2k-1}$, $k\geq 1$.

\subsection{\texorpdfstring{$B$}{B} classes}

The presentation in this Section follows almost verbatim the one in~\cite[Sec.~3.1]{blot2024rootedtreeslevelstructures}.

Let $\SRT_{g,n,m}$ be the set of stable rooted trees of total genus $g$, with $n$ regular legs $\sigma_1,\dots,\sigma_n$ and $m$ extra legs $\sigma_{n+1},\dots,\sigma_{n+m}$, which we refer to as ``frozen'' legs and must always be attached to the root vertex. For a $T\in \SRT_{g,n,m}$ we use the following notation:
\begin{itemize}
	\item $H(T)$ is the set of half-edges of $T$.
	\item $L(T),L_r(T),L_f(T)\subset H(T)$ are the sets of all, regular, and frozen legs of $T$, respectively. $L(T) = L_r(T)\sqcup L_f(T)$.
	\item $H_e(T)\coloneqq H(T)\setminus L(T)$.
	\item $\iota\colon H_e(T)\to H_e(T)$ is the involution that interchanges the half-edges that form an edge.
	\item $E(T)$ is the set of edges of $T$, $E\cong H_e(T)/\iota$.
	\item $H_+(T)\subset H(T)$ is the set of the so-called ``positive'' half-edges that consists of all regular legs of $T$ and of half-edges in $H(T)\setminus L(T)$ directed away from the root at the vertices where they are attached,
	$H_+(T)\cong E(T)\cup L_{r}(T)$; 
	\item $H_-(T)\subset H(T)$ is the set of the so-called ``negative'' half-edges that consists of all frozen legs of $T$ and of half-edges in $H(T)\setminus L(T)$ directed towards the root at the vertices where they are attached, $H_-(T)\cong E(T)\cup L_{f}(T)$;
	\item $V(T),V_{nr}(T)$ are the sets of vertices and non-root vertices of $T$. 
	\item $v_r\in V(T)$ is the root vertex of $T$; $V(T)=\{v_r(T)\}\sqcup V_{nr}(T)$.
	\item For a $v\in V(T)$, $H(v),H_+(v),H_-(v)$ are all, positive, and negative half-edges attached to $v$, respectively. Obviously, $|H_-(v_r)|=m$ and for any $v\in V_{nr}(T)$ we have $|H_-(v)|=1$.
	\item For a $v\in V(T)$ let $g(v)\in \ZZ_{\geq 0}$ be the genus assigned to $v$. The stability condition means that $2g(v)-2+|H(v)|>0$.
	The genus condition reads $\sum_{v\in V(T)} g(v) = g.$
	\item We say that a vertex or a (half-)edge $x$ is a descendant of a vertex or a (half-)edge $y$ if $y$ is on the unique path connecting $x$ to $v_r$. 
	\item For an $h\in H_+(T)$ let $DL(h)$ be the set of all legs that are descendants to $h$, including $h$ itself. Note that $DL(h)\subseteq L_r(T)$ for any $h\in H_+(T)$ and $DL(l)=\{l\}$ for $l\in L_r(T)$. 
	\item For an $e\in E(T)$ let $DL(e)$ be the set of all legs that are descendants to $e$. Note that $DL(e)\subseteq L_r(T)$ for any $e\in E(T)$. 
	\item For a $v\in V(T)$ let $DL(v)$ be the set of all regular legs that are descendants to $v$. In particular, $DL(v_r) = L_r(T)$. 
	\item For a $v\in V(T)$ let $DV(v)\subset V(T)$ be the subset of all vertices that are descendants of $v$, including $v$ itself. For instance, $DV(v_r) = V(T)$. 
\end{itemize}

\subsubsection{Ring of coefficients}
Consider the polynomial ring $Q\coloneqq \QQ[a_1,\dots,a_n]$ and define $a\colon H_+(T) \to Q$, $a\colon E(T)\to Q$, and $a\colon V(T)\to Q$ (abusing notation we use the same symbol $a$ for all of these maps) by 
\begin{align}
	a(\sigma_i)& \coloneqq a_i, &i=1,\dots,n; 
	& & a(h)& \coloneqq \textstyle\sum_{l\in DL(h)} a(l), & h\in H_+(T); \\
	a(e)& \coloneqq \textstyle\sum_{l\in DL(e)} a(l), & e\in E(T); 
	& & a(v)& \coloneqq \textstyle\sum_{l\in DL(v)} a(l), & v\in V(T).
\end{align} 

\subsubsection{Levels}
We enhance the structure of a stable rooted tree to the so-called leveled stable rooted tree (of genus $g$, with $n$ regular and $m$ frozen legs). Let $T\in\SRT_{g,n,m}$. A function $\ell\colon V(T)\to\ZZ_{\geq 0}$ is called a level function if the following conditions are satisfied:
\begin{itemize}
	\item The value of $\ell$ on the root vertex is zero ($\ell(v_r) = 0$).
	\item If $v'\in DV(v)$ and $v'\not=v$, then $\ell(v')>\ell(v)$. 
	\item There are no empty levels, that is, for any $0\leq i \leq \max \ell(V(T))$ the set $\ell^{-1}(i)$ is non-empty. 
\end{itemize}
A leveled stable rooted tree is a pair $(T,\ell)$. The height of a leveled stable rooted tree $(T,\ell)$ is $\ell(T)\coloneqq \max \ell(V(T))$. Let $\mathcal{L}(T)$ denote the set of level functions on $T$. 

Once we fix a level function $\ell\in \mathcal{L}(T)$, we extend the definition of a genus function $g\colon V(T) \to \ZZ_{\geq 0}$ to the level genus function $g^{\mathrm{lvl}}\colon \{0,\dots,\ell(T)\}\to \ZZ_{\geq 0}$ defined as 
\begin{align}
	g^{\mathrm{lvl}} (l) \coloneqq \sum_{v\in V(T), \ell(v) \leq l} g(v). 
\end{align}
Of course, 	$g^{\mathrm{lvl}} (\ell(T))=g$. 

\subsubsection{Degree}
We enhance the structure of a stable rooted tree $T\in SRT_{g,n,m}$ with a so-called degree function $d\colon V(T) \to \ZZ_{\geq 0}$. 
Let $\mathcal{D}(T)$ denote the set of degree functions on $T$. 

In the presence of a level function $\ell\in \mathcal{L}(T)$, we associate with a degree function $d$  a level degree function $d^{\mathrm{lvl}}\colon \{0,\dots,\ell(T)\}\to \ZZ_{\geq 0}$, where 
\begin{align}
	d^{\mathrm{lvl}} (l) \coloneqq \sum_{v\in V(T), \ell(v) \leq l} d(v) + |\{v\in V(T), \ell(v) \leq l\}|-1. 
\end{align}
Define also $d(T)\coloneqq d^{\mathrm{lvl}} (\ell(T))$.

\subsubsection{The leveled rooted tree classes}
Let $T\in\SRT_{g,n,m}$. Assign to each $v\in V(T)$ the moduli space of curves $\overline{\mathcal{M}}_{g(v),|H(v)|}$, where the first $|H_{+}(v)|$ marked points correspond to the positive half-edges attached to $v$ and ordered in an arbitrary but fixed way and the last $|H_{-}(v)|$ marked points correspond to the negative half-edges attached to $v$ (it is just one half edge for a non-root vertex, and for the root vertex we order them following the order of frozen legs $\sigma_{n+1},\dots,\sigma_{n+m}$). Consider the class associated to each vertex:
\begin{equation}
	\label{eq:def-O(v)}
	\Psi(v) \coloneqq \prod_{i=1}^{|H_+(v)|} (1-a(h_i) \psi_i)^{-1}
\end{equation}
For each $d\geq 0$, define
$\Psi(v)_{d}$ to be the  homogeneous component of degree $d$ of the class $\Psi(v)$ with respect to the Chow grading. This coincides with the  homogeneous component of degree $d$, viewed as a polynomial in the variables $a_1,\dots,a_n$.

\begin{definition}
	For each $(g,n,m)$ such that $2g-2+n+m>0$ define the class
	\begin{equation}
		B_{g,n}^{m}\in R^{*}\left(\overline{\mathcal{M}}_{g,n+m}\right)\otimes_{\mathbb{Q}}Q
	\end{equation}
	as
	\begin{align}\label{eq:B-definition}	B_{g,n}^{m}\coloneqq\sum_{\substack{T\in\SRT_{g,n,m},d\in\mathcal{D}(T),\ell\in\mathcal{L}(T)\\
				\forall i<\ell(T)\colon d^{\mathrm{lvl}}(i)\leq2g^{\mathrm{lvl}}(i)-2+m
			}
		}(-1)^{\ell(T)}\biggl(\prod_{e\in E(T)}a(e)\biggr)(\mathrm{gl}_{T})_{*}\bigotimes_{v\in V(T)}\Psi(v)_{d(v)}.
	\end{align}
	Here 
	\begin{align}
		(\mathrm{gl}_{T})_{*} \colon \bigotimes_{v\in V(T)}R^{*}(\overline{\mathcal{M}}_{g(v),|H(v)|})\otimes_{\mathbb{Q}}Q \to R^{*}(\overline{\mathcal{M}}_{g,n+m})\otimes_{\mathbb{Q}}Q
	\end{align} is the boundary pushforward map. 
\end{definition}
The homogeneity structure of $\Psi(v)$ implies that the class $(B_{g,n}^{m})_d$, which is the homogeneous component of the class  $B_{g,n}^{m}$ of degree $d$ with respect to the Chow grading, 
	is a homogeneous polynomial of  degree $d$ in the variables $a_1,\dots,a_n$.

\subsection{Miura transformation} 

The DR/DZ equivalence as presented in~\cite{BS22,blot2026descendantsintegrableobservablescohomological} implies that the following formula gives the Miura transformation mapping the DZ hierarchy associated to the cohomological field theory~\eqref{eq:classes-CohFT} in the dependent coordinate $w$ to the corresponding DR hierarchy in the dependent coordinate $u$.

\begin{proposition}[Proposition~3.3 $(4)$ in \cite{blot2026descendantsintegrableobservablescohomological}] We have
\begin{align} \label{eq:Miura-0}
    u = w - \partial_x^2 \sum_{g,n=1}^\infty \frac{\epsilon^{2g}}{n!} \sum_{\substack{ d_1,\dots,d_n\geq 0 \\ d_1+\cdots+d_n = 2g-2}} \int_{\oM_{g,n}} \mathrm{Cl}_{g,n} \Coeff{\prod_{i=1}^n a_i^{d_i}}B^0_{g,n} \prod_{i=1}^n w_{d_i}.
\end{align}
\end{proposition}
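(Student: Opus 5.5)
The statement is cited as Proposition~3.3(4) of~\cite{blot2026descendantsintegrableobservablescohomological}. My plan is therefore to derive it as a specialization of the general DR/DZ equivalence for an arbitrary CohFT with unit, rather than to rebuild that equivalence.

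\emph{Step 1: recall the general statement.} The DR/DZ equivalence (see~\cite{BS22,BLS-DRDZ,BSS25}) says the following. The normal Miura transformation from the Dubrovin--Zhang coordinates $w^\alpha$ to the DR coordinates $u^\alpha$ has the form $u^\alpha = w^\alpha + \eta^{\alpha\mu}\partial_x\{\mathcal{P}_\mu,\dots\}$. Here the generating series is built from intersection numbers of the CohFT with a universal tautological class. In~\cite{blot2026descendantsintegrableobservablescohomological} this class is written as the leveled-tree class $B^m_{g,n}$ of~\eqref{eq:B-definition}. For $m=0$ it is an explicit combination of boundary pushforwards of $\psi$-polynomials in the formal variables $a_i$.

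\emph{Step 2: specialize to rank one.} With $\eta=1$, a single primary field, and the unit normalized to have norm one, the correlators $\int_{\oM_{g,n}}\mathrm{Cl}_{g,n}\,\cdot$ reduce to scalars. Two facts then collapse the transformation to~\eqref{eq:Miura-0}.
\begin{itemize}
\item The coefficient of $\prod a_i^{d_i}$ in the class pairs with the jet variable $w_{d_i}$, because the formal variable $a_i$ records $\partial_x$ acting on the $i$-th field.
\item The genus-$0$ terms vanish. This is forced by the degree constraint $d_1+\dots+d_n=2g-2$, since the Chow degree of $(B^0_{g,n})$ relevant for the differential degree is $2g-2$ before the prefactor $\partial_x^2$ is applied.
\end{itemize}
The symmetry factor $1/n!$ comes from summing over unordered legs.

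\emph{Step 3: check normalizations.} I expect this to be the main obstacle. It involves three checks: the sign convention (the minus sign with $\partial_x^2$), the placement of the factor $\epsilon^{2g}$, and the fact that the total $\partial_x$-degree $2g$ matches the homogeneity of the Miura transformation. I would verify them with the trivial CohFT in genus one. There $B^0_{1,1}$ in degree $0$ is $1$, and $\int_{\oM_{1,1}}\mathrm{Cl}_{1,1}=\tfrac1{24}$. This gives $u=w-\tfrac{\epsilon^2}{24}w_2$, which is indeed the classical KdV relation between DZ and DR coordinates.
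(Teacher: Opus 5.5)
Your route is the one the paper takes. The paper gives no independent argument for this statement: it imports Proposition~3.3(4) of \cite{blot2026descendantsintegrableobservablescohomological} and specializes it to rank one. One ingredient of that specialization should be made explicit. For rank-one CohFTs the natural and the normal coordinates of the DR hierarchy coincide (see the remark right after the statement). This is what identifies the $u$ produced by the general DR/DZ theorem with the coordinate in which the DR bracket is $\partial_x$, and that is how the formula is used in Proposition~\ref{prop:FormulaForP}.

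\textbf{Your Step~3 check is wrong.} You single out this normalization check as the main obstacle, but both your computed prediction and the ``classical'' relation you compare it with are incorrect.

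\emph{The formula's prediction.} For the trivial CohFT, $\mathrm{Cl}_{1,1}=1$ and $\int_{\oM_{1,1}}1=0$, because $\oM_{1,1}$ is one-dimensional. The value $\tfrac1{24}$ is $\int_{\oM_{1,1}}\lambda_1$ (or $\int_{\oM_{1,1}}\psi_1$). In fact, for $\mathrm{Cl}_{g,n}=1$ every term of \eqref{eq:Miura-0} vanishes, since the integrand has Chow degree $2g-2<3g-3+n$. So the formula predicts $u=w$.

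\emph{The known relation.} $u=w$ is the correct answer. The DR and DZ hierarchies of the trivial CohFT are both KdV, $w_t=ww_x+\tfrac{\epsilon^2}{12}w_{xxx}$, and they are related by the identity. Indeed, $u=w+c\,\epsilon^2w_2$ maps solutions of this equation to solutions of the same equation only for $c=0$; the discrepancy is $2c\,\epsilon^2 w_1w_2$. So $u=w-\tfrac{\epsilon^2}{24}w_2$ is not a DR/DZ relation.

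As written, your check therefore ``confirms'' a false identity. Once corrected, it is degenerate and tests none of the sign, $\epsilon^{2g}$, or degree conventions you wanted to test.

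A non-degenerate genus-one test needs a CohFT with a nonzero $\lambda_1$-component, for example $\mathrm{Cl}=\Lambda(-r)$. There $\int_{\oM_{1,1}}\Lambda(-r)=-\tfrac{r}{24}$, and the formula gives $u=w+\tfrac{r\epsilon^2}{24}w_2+O(\epsilon^4)$. This is the genus-one part of $u=\calS(\sqrt{r}\,\epsilon\partial_x)w$, which appears in the $\Lambda(-r)$ example of Section~\ref{Sec: General Miura}. For it to be a real check, it must be matched against an independent description of the DR/DZ transformation for linear Hodge integrals.
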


\begin{remark}
As observed in~\cite[Remark 3.9]{blot2026descendantsintegrableobservablescohomological} (the statement there compares the Miura transformations to the natural and to the
normal coordinates of the DR hierarchy, which coincide for rank one CohFTs,
cf.~\cite[Eq.~(3.70)]{blot2026descendantsintegrableobservablescohomological}),    Eq.~\eqref{eq:Miura-0} can be equivalently rewritten as
\begin{align} \label{eq:Miura-1}
    u = w - \partial_x \sum_{g,n=1}^\infty \frac{\epsilon^{2g}}{n!} \sum_{\substack{ d_1,\dots,d_n\geq 0 \\ d_1+\cdots+d_n = 2g-1}} \int_{\oM_{g,n+1}} \mathrm{Cl}_{g,n+1} \Coeff{\prod_{i=1}^n a_i^{d_i}}B^1_{g,n} \prod_{i=1}^n w_{d_i}.
\end{align}
\end{remark}

A structural property of this Miura transformation is established through the following property of the $B$ class. 

\begin{lemma} \label{lemma:push-forward} For any $g\geq 2$, $n\geq 1$, $d_1,\dots,d_n\geq 0$ such $d_1+\cdots+d_n=2g-2$, we have
\begin{align}
    \pi_*\Coeff{\prod_{i=1}^n a_i^{d_i}a_{n+1}^0} B^0_{g,n+1} = 0,
\end{align}
where $\pi \colon \oM_{g,n+1}\to \oM_{g,n}$ is the morphism that forgets the last marked point. 
\end{lemma}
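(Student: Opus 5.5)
Setting $a_{n+1}=0$ in the definition~\eqref{eq:B-definition} of $B^0_{g,n+1}$ simplifies it considerably. The coefficient of $a_{n+1}^0$ is obtained by putting $a_{n+1}=0$. Then:
\begin{itemize}
\item in the vertex class $\Psi(v)$ carrying the leg $\sigma_{n+1}$, the factor $(1-a(\sigma_{n+1})\psi)^{-1}$ becomes $1$;
\item every $a(h)$, $a(e)$, $a(v)$ loses its $a_{n+1}$ summand.
\end{itemize}
The plan is to compare each term of $B^0_{g,n+1}|_{a_{n+1}=0}$ with a term of a pulled-back or pushed-forward class on $\oM_{g,n}$, and to show that everything cancels after applying $\pi_*$.

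\emph{Step 1: sort the trees by where $\sigma_{n+1}$ sits.} Let $v_0$ be the vertex carrying $\sigma_{n+1}$.
\begin{itemize}
\item If $v_0$ stays stable after forgetting $\sigma_{n+1}$, then, after setting $a_{n+1}=0$, the vertex class at $v_0$ is $\prod(1-a(h_i)\psi_i)^{-1}$ over the remaining half-edges, with $\psi$ classes on $\oM_{g(v_0),|H(v_0)|}$. We use the comparison $\psi_i=\pi^*\psi_i+\delta_{i,n+1}$. The pushforward of a monomial in psi classes pulled back along $\pi$ vanishes by dimension. Only the corrections supported on divisors $\delta_{i,n+1}$ survive, and their pushforward is computed by the string equation: $\pi_*\bigl(\prod_i \psi_i^{k_i}\bigr)=\sum_j \prod_i \psi_i^{k_i-\delta_{ij}}$.
\item If $v_0$ becomes unstable, i.e.\ $v_0$ has genus $0$ and valence $3$, then $v_0$ carries $\sigma_{n+1}$ together with either two other half-edges (one possibly the edge to its parent) or the parent edge plus one leg. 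Such terms push forward to zero, or contract to trees with one fewer vertex.
\end{itemize}

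\emph{Step 2: organise the result as a telescoping sum.} The string-equation terms produce $\sum_j a(h_j)\prod_i(1-a(h_i)\psi_i)^{-1}$ at the vertex $v_0$ of the contracted tree. The summand $a(h_j)$ has exactly the same shape as the edge factor $a(e)$ carried by the trees in which $\sigma_{n+1}$ sits on a genus-$0$ trivalent vertex attached along the half-edge $h_j$. The degree count also matches: forgetting the point lowers the Chow degree by one, compensated by the extra factor $a(h_j)$, while such a trivalent vertex contributes one vertex and degree $0$ to $d^{\mathrm{lvl}}$.

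The sign $(-1)^{\ell(T)}$ is where cancellation comes from. Inserting the bubble vertex either creates a new level or sits on an existing one. I expect the pairing to be: the string term of $(T,\ell)$ is paired with the tree obtained by adding a bubble on a new level, which carries the opposite sign. The terms in which the bubble sits on an existing level also need a partner. They should be matched against the splitting of the level-degree constraint $d^{\mathrm{lvl}}(i)\le 2g^{\mathrm{lvl}}(i)-2$, which for $m=0$ is exactly the constraint that changes when one vertex and one degree are added.

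\emph{Main obstacle.} The hard part is the bookkeeping of the level inequalities $d^{\mathrm{lvl}}(i)\le 2g^{\mathrm{lvl}}(i)-2$. When the bubble is added on a new level $i$, the inequality at that level must hold automatically, and this seems to need $d_1+\cdots+d_n=2g-2$. That condition forces the top level to saturate, $d(T)=2g-2$, and this should be used to show that boundary cases cancel exactly rather than leave a remainder. I would first verify the mechanism on trees of height $0$ and $1$, where $B^0_{g,n+1}=\Psi_{2g-2}-(\text{one-edge terms})$. There, at height $0$, $\pi_*\Psi_{2g-2}$ should equal the one-edge bubble terms by the string equation. Then I would induct on the height, using the recursive structure of levels. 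An alternative, if this combinatorics becomes unwieldy, is to invoke the characterization of $B^m$ classes from~\cite{blot2024rootedtreeslevelstructures,blot2026descendantsintegrableobservablescohomological} via DR-type relations. The vanishing would then follow from the known pushforward property of the DR cycle, $\pi_*\mathrm{DR}_g(a_1,\dots,a_n,0)=0$ in top degree, for $g\ge1$.
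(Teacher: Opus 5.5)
Your skeleton is the same as the paper's. At a vertex $\tilde v$ that stays stable, the string equation turns the term of $(T,d,\ell)$ into $a(\tilde v)=\sum_{h\in H_+(\tilde v)}a(h)$ times the term of the contracted data $(T',d',\ell')$, with $d'(\tilde v)=d(\tilde v)-1$. Each summand $a(h)$ is then cancelled by the trees in which $\sigma_{n+1}$ sits on a genus-$0$ trivalent vertex $w$ inserted along $h$. Two corrections to Step 1: these bubble terms are not killed by $\pi_*$ (each gives $a(h)$ times the same $(T',d',\ell')$-term, with sign $(-1)^{\ell(T)}$); and the vertex carrying $\sigma_{n+1}$ can become unstable only if it is not the root, since for $m=0$ the level-$0$ condition forces $g(v_r)\geq 1$.

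\textbf{The gap.} The step that actually proves the lemma is the signed count of admissible level placements of $w$, and you leave it as an expectation whose pairing does not work as stated. There is no single ``bubble on a new level''. Along an edge from $\tilde v$ to a child $\tilde v_j$, there are $\ell'(\tilde v_j)-\ell'(\tilde v)-1$ placements of $w$ on an existing level (same height) and $\ell'(\tilde v_j)-\ell'(\tilde v)$ placements on a new level (height $+1$, opposite sign). Along a leg at $\tilde v$ the counts are $\ell'(T')-\ell'(\tilde v)$ and $\ell'(T')-\ell'(\tilde v)+1$. So the existing-level terms cancel against the surplus new-level terms, not against anything coming from the level-degree constraint; for instance, pair $w$ on an existing level $i$ with $w$ alone on a new level directly above $i$. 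Each half-edge $h$ then leaves exactly $-a(h)$ times the $T'$-term, and summing over $H_+(\tilde v)$ cancels the string term. This is a local count, so no induction on the height is needed.

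\textbf{Where the level constraint enters.} It enters only through admissibility, and this should be made precise rather than left to ``boundary cases''. Because $d'(\tilde v)=d(\tilde v)-1$, one has $(d')^{\mathrm{lvl}}(i)<2g^{\mathrm{lvl}}(i)-2$ for every $i\geq\ell'(\tilde v)$. This includes the top level, where $(d')^{\mathrm{lvl}}(\ell'(T'))=2g-3$; that is where the degree hypothesis $\sum d_i=2g-2$ is used, since it makes the leg placement on a new top level admissible. Hence all the placements counted above are allowed.

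Conversely, with the pairing just described, the unmatched placement ($w$ alone on a new level directly above $\ell'(\tilde v)$) is admissible exactly when $(T,d,\ell)$ is. Some bubble trees have a contraction with a saturated level $(d')^{\mathrm{lvl}}(i)=2g^{\mathrm{lvl}}(i)-2$ at some $i\geq\ell'(\tilde v)$; these do not arise from any admissible $T$, and they cancel among themselves.

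Your DR-cycle fallback is not an argument as it stands. $B^0_{g,n+1}$ is not a DR cycle, and you do not say which relation would transfer $\pi_*\mathrm{DR}_g(a_1,\dots,a_n,0)=0$ to it.
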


\begin{remark}
    One possible proof follows the one of~\cite[Prop.~4.3]{BuryakGuereRossi}. The case $d_1+\cdots+d_n=2g-2$ is not considered in~\emph{op.~cit.}, but by essentially the same argument it is possible to prove that $\Coeff{\prod_{i=1}^n a_i^{d_i}a_{n+1}^0} B^0_{g,n+1} = \pi^*\Coeff{\prod_{i=1}^n a_i^{d_i}} B^0_{g,n}$. However, we need a simpler statement, so we present a slightly different proof.
\end{remark}

\begin{proof} Recall that $\pi_* \prod_{i=1}^n (1-b_i \psi_i)^{-1} = \sum_{i=1}^n b_i \prod_{i=1}^n (1-b_i \psi_i)^{-1}$. We are going to use this property and combine leveled stable trees $(T,\ell)$, $T\in\SRT_{g,n+1,0}$, $\ell\in \mathcal{L}(T)$, in groups to manifestly show that the push-forward   $\pi_*\Coeff{\prod_{i=1}^n a_i^{d_i}a_{n+1}^0} B^0_{g,n+1}$ vanishes.

Consider $T\in\SRT_{g,n+1,0}$, $d\in\mathcal{D}(T)$ and $\ell\in\mathcal{L}(T)$ satisfying conditions in the definition of $B^0_{g,n+1}$. Assume that the leg $\sigma_{n+1}$ is attached to a vertex $\tilde v\in V(T)$ such that $2g(\tilde v)-1+|H_+(\tilde v)|>1$, that is, once the push-forward erases the leg $\sigma_{n+1}$, the vertex remains stable. Denote the set of triples $(T,d,\ell)$ like this $S_{n+1}$.

In this case, let $T'$ be the tree with removed $\sigma_{n+1}$, $d'$ be the same as $d$ on all vertices except for $\tilde v$, $d'(\tilde v) = d(\tilde v)-1$, and $\ell'$ be the same as $\ell$. Since $d'(\tilde v)=d(\tilde v)-1$, the degree function $d'$ satisfies the strict inequality $(d')^{\mathrm{lvl}}(i)<2g^{\mathrm{lvl}}(i)-2$ for any level $i\geq \ell'(\tilde v)$. This extra unit will allow us below to insert a new vertex after $\tilde v$ without violating the level conditions. We have
\begin{align} \label{eq:push-forward-1} 
    & \pi_* (-1)^{\ell(T)}\biggl(\prod_{e\in E(T)}a(e)\biggr)(\mathrm{gl}_{T})_{*}\bigotimes_{v\in V(T)}\Psi(v)_{d(v)}\,\Big|_{a_{n+1}=0} =
    \\ \notag & 
    a(v')\cdot (-1)^{\ell'(T')}\biggl(\prod_{e\in E(T')}a(e)\biggr)(\mathrm{gl}_{T'})_{*}\bigotimes_{v\in V(T')}\Psi(v)_{d'(v)}.
\end{align}

Assume that the set $H_+(\tilde v)$ in $T'$ splits as $\{\rho_1,\dots,\rho_k\}\sqcup \{\sigma_{t_1},\dots,\sigma_{t_m}\}$, where $\rho_i\in H_e(T)$ and the edges $(\rho_i, \iota(\rho_i))$ connect $\tilde v$ to the vertex that we denote $\tilde v_i$. 
As we shall see, there are $k+m$ more trees $T_{\rho_1},\dots,T_{\rho_k}$, $T_{\sigma_{t_1}},\dots,T_{\sigma_{t_m}}$, whose associated classes push-forward to the right hand side of Eq.~\eqref{eq:push-forward-1}, with a factor. We will show that adding these contributions yields zero.

The tree $T_{\rho_j}\in \SRT_{g,n+1,0}$ is obtained from $T'$ by replacing the edge $(\rho_j, \iota(\rho_j))$ with two edges connecting $\tilde v$ and $\tilde v_j$ to a new vertex $w$ such that $g(w)=0$ and $H(w)=3$ and $\sigma_{n+1}$ is attached to $w$. We extend $d'$ to $T_{\rho_j}$ by $d'(w)=0$ (it is the unique possible extension). Then, we are interested in the subset $\mathcal{L}_{\rho_j}\subset \mathcal{L}(T_{\rho_j})$ of the set of level functions on $T_{\rho_j}$ that consists of the level functions $\ell$ such that $\ell(v_1)=\ell(v_2)$ for arbitrary two vertices $v_1,v_2\in V(T_{\rho_i})\setminus \{w\}$ if and only if $\ell'(v_1) = \ell'(v_2)$. 
There are precisely $\ell'(\tilde v_j)-1-\ell'(\tilde v)$ level functions $\ell \in \mathcal{L}_{\rho_j}$ such that $\ell(T_{\rho_j})= \ell(T')$ (those where $w$ is inserted in a pre-existing level strictly between the levels of $\tilde v$ and of $\tilde{v}_j$), and $\ell'(\tilde v_j)-\ell'(\tilde v)$ level functions $\ell \in \mathcal{L}_{\rho_j}$ such that $\ell(T_{\rho_j})= \ell(T')+1$, and no further possibilities (those creating a new level occupied by $w$ alone after the level of $\tilde v$ and before the level of $\tilde{v}_j$), thus $|\mathcal{L}_{\rho_j}| = 2\ell'(\tilde v_j)-2\ell'(\tilde v)-1$. Note that for all level functions $\ell \in \mathcal{L}_{\rho_j}$ the degree function $d'$ satisfies the level condition $\forall i<\ell(T_{\rho_j})\colon (d')^{\mathrm{lvl}}(i)\leq 2g^{\mathrm{lvl}}(i)-2$. Then we have:
\begin{align} \label{eq:push-forward-2} 
    & \pi_* \sum_{\ell\in \mathcal{L}_{\rho_j}}(-1)^{\ell(T_{\rho_j})}\biggl(\prod_{e\in E(T_{\rho_j})}a(e)\biggr)(\mathrm{gl}_{T_{\rho_j}})_{*}\bigotimes_{v\in V(T_{\rho_j})}\Psi(v)_{d'(v)} \,\Big|_{a_{n+1}=0} =
    \\ \notag & 
    a(\rho_j) \Big(\big(\ell'(\tilde v_j)-1-\ell'(\tilde v)\big) - \big(\ell'(\tilde v_j)-\ell'(\tilde v)\big)\Big)\cdot (-1)^{\ell'(T')}\biggl(\prod_{e\in E(T')}a(e)\biggr)(\mathrm{gl}_{T'})_{*}\bigotimes_{v\in V(T')}\Psi(v)_{d'(v)}.
\end{align}

The tree $T_{\sigma_{t_j}}\in \SRT_{g,n+1,0}$ is obtained from $T'$ by replacing the leg $\sigma_{t_j}$ with an edge connecting $\tilde v$ to a new vertex $w$ such that $g(w)=0$ and $H(w)=3$ and $\sigma_{n+1}$ and $\sigma_{t_j}$ are attached to $w$. We extend $d'$ to $T_{\sigma_{t_j}}$ by $d'(w)=0$ (it is the unique possible extension). Then, we are interested in the subset $\mathcal{L}_{\sigma_{t_j}}\subset \mathcal{L}(T_{\sigma_{t_j}})$ of the set of level functions on $T_{\sigma_{t_j}}$ that consists of the level functions $\ell$ such that $\ell(v_1)=\ell(v_2)$ for arbitrary two vertices $v_1,v_2\in V(T_{\sigma_{t_j}})\setminus \{w\}$ if and only if $\ell'(v_1) = \ell'(v_2)$. 
There are precisely $\ell'(T')-\ell'(\tilde v)$ level functions $\ell \in \mathcal{L}_{\sigma_{t_j}}$ such that $\ell(T_{\sigma_{t_j}})= \ell(T')$ and $\ell'(T')-\ell'(\tilde v)+1$ level functions $\ell \in \mathcal{L}_{\sigma_{t_j}}$ such that $\ell(T_{\sigma_{t_j}})= \ell(T')+1$, and no further possibilities, thus $|\mathcal{L}_{\sigma_{t_j}}| = 2\ell'(T')-2\ell'(\tilde v)+1$. Note that for all level functions $\ell \in \mathcal{L}_{\sigma_{t_j}}$ the degree function $d'$ satisfies the level condition $\forall i<\ell(T_{\sigma_{t_j}})\colon (d')^{\mathrm{lvl}}(i)\leq 2g^{\mathrm{lvl}}(i)-2$. Then we have:
\begin{align} \label{eq:push-forward-3} 
    & \pi_* \sum_{\ell\in \mathcal{L}_{\sigma_{t_j}}}(-1)^{\ell(T_{\sigma_{t_j}})}\biggl(\prod_{e\in E(T_{\sigma_{t_j}})}a(e)\biggr)(\mathrm{gl}_{T_{\sigma_{t_j}}})_{*}\bigotimes_{v\in V(T_{\sigma_{t_j}})}\Psi(v)_{d'(v)} =
    \\ \notag & 
    a_{t_j} \Big(\big(\ell'(T')-\ell'(\tilde v)\big) - \big(\ell'(T)-\ell'(\tilde v)+1\big)\Big)\cdot (-1)^{\ell'(T')}\biggl(\prod_{e\in E(T')}a(e)\biggr)(\mathrm{gl}_{T'})_{*}\bigotimes_{v\in V(T')}\Psi(v)_{d'(v)}.
\end{align}
Now notice that the sum of the right hand sides of \eqref{eq:push-forward-1}, \eqref{eq:push-forward-2}, and \eqref{eq:push-forward-3} is equal to zero. On the other hand, the graphs that we have on the left hand sides (indexed by $(T,d,\ell) \in S_{n+1}$) exhaust all possibilities of $T\in\SRT_{g,n+1,0}$, $d\in\mathcal{D}(T)$ and $\ell\in\mathcal{L}(T)$ satisfying conditions in the definition of $B^0_{g,n+1}$. Thus, the sum of the left hand sides of \eqref{eq:push-forward-1}, \eqref{eq:push-forward-2}, and \eqref{eq:push-forward-3} over $(T,d,\ell) \in S_{n+1}$ is equal to $\pi_* B^0_{g,n+1} |_{a_{n+1}=0}$. Thus $\pi_* B^0_{g,n+1} |_{a_{n+1}=0} = 0$.
\end{proof} 

Moreover, recall that Chow degree of $\mathrm{Cl}_{g,n}$ is bounded from above by $3g-3+\delta_{g,1}$. Hence,
\begin{lemma} \label{lemma:bound} For any $g\geq 1$, $n\geq 1$, and $d_1,\dots,d_n\geq 0$ such that $d_1+\cdots+d_n = 2g-2$ the intersection number 
$\int_{\oM_{g,n}} \mathrm{Cl}_{g,n} \Coeff{\prod_{i=1}^n a_i^{d_i}}B^0_{g,n}=0$ for $n\geq 2g-1+\delta_{g,1}$.
\end{lemma}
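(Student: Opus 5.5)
The plan is a pure dimension count, using the homogeneity of the $B$ classes noted after Definition~\eqref{eq:B-definition} together with the bound on the Chow degree of $\mathrm{Cl}_{g,n}$ recalled just before the statement.

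\emph{Degree of the $B$ factor.} The homogeneous component $(B^0_{g,n})_d$ of Chow degree $d$ is a homogeneous polynomial of degree $d$ in $a_1,\dots,a_n$. Hence the coefficient $\Coeff{\prod_{i=1}^n a_i^{d_i}}B^0_{g,n}$ with $d_1+\cdots+d_n=2g-2$ is a class of pure Chow degree $2g-2$ in $R^*(\oM_{g,n})$.

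\emph{Degree of the CohFT factor.} The class $\mathrm{Cl}_{g,n}$ is a polynomial in the $\ch_i$, equivalently in the $\lambda_i$. By construction these are pulled back from $\oM_g$ when $g\geq 2$, and from $\oM_{1,1}$ when $g=1$, under the forgetful map. Their Chow degree is therefore at most the dimension of that space, namely $3g-3+\delta_{g,1}$. This is the bound recalled before the statement, and I use it as given: classes of higher degree vanish on $\oM_g$ (resp.\ $\oM_{1,1}$), so their pull-backs vanish as well.

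\emph{Comparison with the dimension.} The integrand $\mathrm{Cl}_{g,n}\cdot\Coeff{\prod a_i^{d_i}}B^0_{g,n}$ has all of its nonzero homogeneous components in Chow degree at most $(3g-3+\delta_{g,1})+(2g-2)$. The integral picks out the component of degree $\dim\oM_{g,n}=3g-3+n$. This component can be nonzero only if
\[
3g-3+n\leq 5g-5+\delta_{g,1},
\qquad\text{that is,}\qquad
n\leq 2g-2+\delta_{g,1}.
\]
Hence for $n\geq 2g-1+\delta_{g,1}$ the relevant component vanishes, and so does the intersection number.

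For $g=1$ this reads: the $B$ factor has degree $0$, $\mathrm{Cl}_{1,n}$ has degree at most $1$, and $\dim\oM_{1,n}=n\geq 2$, so the integral is zero.

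I expect no real obstacle. The only point needing care is the degree bound for $\mathrm{Cl}_{g,n}$ in genus $0$ and $1$, where the Hodge classes are defined via $\oM_{0,3}$ and $\oM_{1,1}$. Genus $0$ is excluded by $g\geq 1$, and genus $1$ is exactly what the $\delta_{g,1}$ term accounts for.
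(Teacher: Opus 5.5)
Your proof is correct and follows essentially the paper's argument: the paper deduces the lemma from the bound $3g-3+\delta_{g,1}$ on the Chow degree of $\mathrm{Cl}_{g,n}$. Combining this with the Chow degree $2g-2$ of the extracted coefficient of $B^0_{g,n}$ and comparing the total with $\dim\oM_{g,n}=3g-3+n$ gives exactly your dimension count, including the separate treatment of genus $1$ via $\oM_{1,1}$.
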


\begin{corollary} \label{cor:u-w} The second summand in \eqref{eq:Miura-0} does not depend explicitly on $w_0$, and $n$ there is bounded by $2g-2$ for any $g\geq 2$. In particular, we can rewrite \eqref{eq:Miura-0} as
\begin{align} \label{eq:Miura-0-v2}
    u & = w - \epsilon^2 \int_{\oM_{1,1}} \mathrm{Cl}_{1,1} \Coeff{a_1^{0}}B^0_{1,1} w_2 
    \\ \notag & \quad 
    -
    \partial_x^2 \sum_{g=2}^\infty \sum_{n=1}^{2g-2} \frac{\epsilon^{2g}}{n!} \sum_{\substack{ d_1,\dots,d_n\geq 1 \\ d_1+\cdots+d_n = 2g-2}} \int_{\oM_{g,n}} \mathrm{Cl}_{g,n} \Coeff{\prod_{i=1}^n a_i^{d_i}}B^0_{g,n} \prod_{i=1}^n w_{d_i}.
\end{align}
\end{corollary}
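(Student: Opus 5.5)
We have to establish two things about the second summand of \eqref{eq:Miura-0}: the terms containing $w_0$ vanish, and the terms with too many legs vanish. After that, we isolate the genus one contribution.

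\emph{No explicit $w_0$.} Fix $g\geq 2$ and a tuple $(d_1,\dots,d_{n+1})$ with sum $2g-2$ in which at least one $d_i$ is zero. Since the integrand is symmetric under permuting legs, we may assume $d_{n+1}=0$. The class $\mathrm{Cl}_{g,n+1}$ is pulled back from $\oM_g$, hence also from $\oM_{g,n}$: $\mathrm{Cl}_{g,n+1}=\pi^*\mathrm{Cl}_{g,n}$, where $\pi$ forgets the last point. The projection formula then gives
\[
\int_{\oM_{g,n+1}} \mathrm{Cl}_{g,n+1}\,\Coeff{\prod_{i=1}^n a_i^{d_i}a_{n+1}^0}B^0_{g,n+1} = \int_{\oM_{g,n}} \mathrm{Cl}_{g,n}\,\pi_*\Coeff{\prod_{i=1}^n a_i^{d_i}a_{n+1}^0}B^0_{g,n+1},
\]
and the right-hand side vanishes by Lemma~\ref{lemma:push-forward}. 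For $n=0$ there is no term in the sum, since $n\geq1$. So for $g\geq 2$ only tuples with all $d_i\geq 1$ survive.

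\emph{Bound on $n$.} With all $d_i\geq1$ and $\sum d_i=2g-2$, we automatically get $n\leq 2g-2$. This bound is consistent with Lemma~\ref{lemma:bound}, which kills every term with $n\geq 2g-1$.

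\emph{Genus one.} Here $\sum d_i=0$, so $d_1=\dots=d_n=0$. By Lemma~\ref{lemma:bound} with $\delta_{1,1}=1$, the integral vanishes for $n\geq 2$, leaving only $n=1$. That term is $\epsilon^2\partial_x^2\int_{\oM_{1,1}}\mathrm{Cl}_{1,1}\Coeff{a_1^0}B^0_{1,1}\,w_0$. Writing $\partial_x^2 w_0=w_2$, it becomes $\epsilon^2\int_{\oM_{1,1}}\mathrm{Cl}_{1,1}\Coeff{a_1^0}B^0_{1,1}\,w_2$, which is the first correction term in \eqref{eq:Miura-0-v2}.

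So there is no explicit dependence on $w_0$ after the $\partial_x^2$ in any genus: for $g\geq2$ all $d_i\geq1$, and for $g=1$ the $w_0$ is differentiated twice to $w_2$.

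The one point needing care is the pullback identity $\mathrm{Cl}_{g,n+1}=\pi^*\mathrm{Cl}_{g,n}$. It holds because the Hodge classes are defined on $\oM_{g,n}$ by pullback along the maps that forget marked points (for $g\geq 2$, from $\oM_g$). Compatibility of these pullbacks under composition then gives the identity.
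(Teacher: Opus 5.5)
Your argument is correct and essentially matches the paper's. For $g\geq 2$, the projection formula with $\mathrm{Cl}_{g,n+1}=\pi^*\mathrm{Cl}_{g,n}$ and Lemma~\ref{lemma:push-forward} kill every term with some $d_i=0$, and in genus one Lemma~\ref{lemma:bound} leaves only the one-leg term, which becomes $w_2$; the only harmless deviation is that you read off $n\leq 2g-2$ directly from $d_i\geq 1$ instead of citing Lemma~\ref{lemma:bound}. One small correction: the one-leg terms with $g\geq 2$ contain no $w_0$ simply because $d_1=2g-2>0$, not for the reason ``$n\geq 1$'' that you give.
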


\begin{proof} The restriction on $n$ is a direct consequence of Lemma~\ref{lemma:bound}. The second summand in \eqref{eq:Miura-0} does not depend explicitly on $w_0$, since for any $g\geq 2$, $n\geq 1$ we have:
\begin{align}
\int_{\oM_{g,n+1}} \mathrm{Cl}_{g,n+1} \Coeff{\prod_{i=1}^n a_i^{d_i}a_{n+1}^0}B^0_{g,n+1} & = \int_{\oM_{g,n+1}} \pi^*\mathrm{Cl}_{g,n} \Coeff{\prod_{i=1}^n a_i^{d_i}a_{n+1}^0}B^0_{g,n+1} \\ \notag & = \int_{\oM_{g,n}} \mathrm{Cl}_{g,n} \pi_*\Coeff{\prod_{i=1}^n a_i^{d_i}a_{n+1}^0}B^0_{g,n+1} = 0 
\end{align}
by Lemma~\ref{lemma:push-forward}, where $\pi\colon \oM_{g,n+1}\to \oM_{g,n}$, while the case $n=0$ yields the term
\begin{equation}
    - \sum_{g\geq 1} \epsilon^{2g}\ \int_{\oM_{g,1}} \mathrm{Cl}_{g,1} \Coeff{a_1^{2g-2}}B^0_{g,1} w_{2g}
\end{equation}
which also does not depend on $w_0$.
\end{proof}

\subsection{Formula for the bracket} One can use the Miura transformation~\eqref{eq:Miura-0-v2} to connect the Poisson bracket $\mathrm{P}$ of the Dubrovin-Zhang hierarchy of $\{\mathrm{Cl}_{g,n}\}$ to the Poisson bracket  of the DR hierarchy, which is simply $\partial_x$. Let
\begin{align}
    L & \coloneqq \sum_{k=0}^\infty \frac{\partial u}{\partial w_k} \partial_x^k = 1-\tilde L,
    \\ \notag 
    \tilde L & \coloneqq \int_{\oM_{1,1}} \mathrm{Cl}_{1,1} \Coeff{a_1^{0}}B^0_{1,1} (\epsilon \partial_x)^2
    \\ \notag & \qquad 
    + \partial_x^2 \circ \sum_{g=2}^\infty \sum_{n=0}^{2g-3} \frac{\epsilon^{2g}}{n!} \sum_{\substack{ d_1,\dots,d_{n+1}\geq 1 \\ d_1+\cdots+d_{n+1} = 2g-2}} \int_{\oM_{g,n+1}} \mathrm{Cl}_{g,n+1} \Coeff{\prod_{i=1}^{n+1} a_i^{d_i}}B^0_{g,n+1} \prod_{i=1}^n w_{d_i} \partial_x^{d_{n+1}},
\end{align}
where we use Eq.~\eqref{eq:Miura-0-v2} and the property that $\sum_{k=0}^\infty \frac{\partial(\partial_x(v))}{\partial w_k} \partial_x^k = \partial_x\circ \sum_{k=0}^\infty \frac{\partial v}{\partial w_k} \partial_x^k$ for any differential polynomial $v$ in $w_{d}$, $d\geq 0$ (see e.g.~\cite[Lem.~1]{BPS-2}). 

The results of~\cite{BS22,BLS-DRDZ,BSS25}, as summarized in~\cite{blot2026descendantsintegrableobservablescohomological}, imply the following proposition.

\begin{proposition} \label{prop:FormulaForP}  We have
\begin{align} \label{eq:FormulaForP}
    L \mathrm{P} L^* = \partial_x; \qquad \mathrm{P} = L^{-1} \partial_x (L^{-1})^*.
\end{align}
\end{proposition}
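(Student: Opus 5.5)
We first establish $L\,\mathrm{P}\,L^* = \partial_x$; the second formula then follows by inverting. The first step is to recall that the Miura transformation~\eqref{eq:Miura-0-v2} is of the form $u = w + O(\epsilon^2)$. It maps the DZ hierarchy in the coordinate $w$ to the DR hierarchy in the coordinate $u$. By the DR/DZ equivalence of~\cite{BS22,BLS-DRDZ,BSS25}, in the form stated in~\cite{blot2026descendantsintegrableobservablescohomological}, the Hamiltonian structures match as well as the flows: the DZ Poisson operator $\mathrm{P}$ is carried to the DR Poisson operator, which is $\partial_x$ in the normal coordinate $u$. For rank one the natural and normal coordinates coincide (cf.~\cite[Eq.~(3.70)]{blot2026descendantsintegrableobservablescohomological}), so we may use $u$ directly.

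Next we apply the standard transformation rule for Hamiltonian operators under a change of dependent variable $u=u(w,w_1,\dots)$. If $\mathrm{P}$ is the operator in the $w$-coordinate, then the operator in the $u$-coordinate is $L\,\mathrm{P}\,L^*$. Here $L=\sum_k \frac{\partial u}{\partial w_k}\partial_x^k$ is the Fréchet derivative of the map and $L^*$ is its formal adjoint. The rule follows from the chain rule for variational derivatives, $\frac{\delta}{\delta w} = L^*\frac{\delta}{\delta u}$, applied to the bracket $\{F,G\} = \int \frac{\delta F}{\delta w}\,\mathrm{P}\,\frac{\delta G}{\delta w}$. Identifying the transformed operator with $\partial_x$ gives $L\mathrm{P}L^*=\partial_x$.

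To write $L$ explicitly, we differentiate~\eqref{eq:Miura-0-v2} and use the commutation $\sum_k \frac{\partial(\partial_x v)}{\partial w_k}\partial_x^k = \partial_x\circ\sum_k\frac{\partial v}{\partial w_k}\partial_x^k$ twice. Differentiating the summand $\frac{1}{n!}\prod_i w_{d_i}$ with respect to each factor gives $n$ identical contributions, by the symmetry of the $B$ class and of $\mathrm{Cl}$ under permutations of the legs. Relabelling produces the factor $\frac1{(n-1)!}$, which after the shift $n\mapsto n+1$ is exactly the expression $\tilde L$ above.

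Finally, we invert. Since $L = 1-\tilde L$ and $\tilde L = O(\epsilon^2)$, the operator $L^{-1}=\sum_{j\ge0}\tilde L^j$ is well defined as a formal series in $\epsilon$ with coefficients that are differential operators. The same holds for $(L^*)^{-1}=(L^{-1})^*$. Multiplying $L\mathrm{P}L^*=\partial_x$ on the left by $L^{-1}$ and on the right by $(L^{-1})^*$ gives $\mathrm{P}=L^{-1}\partial_x(L^{-1})^*$.

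The main obstacle is justifying, precisely, that the cited DR/DZ equivalence transports the Poisson structures and not just the flows. We will handle it by invoking the statement in~\cite{blot2026descendantsintegrableobservablescohomological} that the Miura map~\eqref{eq:Miura-0} sends the DZ Hamiltonian structure to the DR one, $\partial_x$. Everything else is formal bookkeeping.
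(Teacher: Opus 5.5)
Your proposal is correct and takes the same approach as the paper. The paper also obtains the proposition directly from the DR/DZ equivalence of \cite{BS22,BLS-DRDZ,BSS25}, as summarized in \cite{blot2026descendantsintegrableobservablescohomological}: the Miura map \eqref{eq:Miura-0} carries the Dubrovin--Zhang bracket to the DR bracket $\partial_x$, and the transformation law $\mathrm{P}\mapsto L\mathrm{P}L^*$, the explicit form of $L$, and the inversion via the geometric series of Remark~\ref{rem:geometric-series} are the routine steps you describe.
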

Recall that, for a differential operator of the form $A= \sum_i a_i \partial_x^i$, its formal adjoint is defined as $A^* = \sum_i (-\partial_x)^i \circ a_i$. 

\begin{remark} \label{rem:geometric-series}
    Note that we can compute $L^{-1}$ as the geometric series, $L^{-1} = \sum_{i=0}^\infty (\tilde L) ^i$.
\end{remark}

\begin{corollary} \label{cor:no-w0}
$\mathrm{P}$ doesn't depend explicitly on $w_0$. 
\end{corollary}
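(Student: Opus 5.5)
The plan is to read off the claim directly from Proposition~\ref{prop:FormulaForP}, $\mathrm{P} = L^{-1}\partial_x (L^{-1})^*$, by checking that every ingredient on the right hand side is built from coefficients that are differential polynomials not involving $w_0$.

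First, I will look at the coefficients of $\tilde L$ as written above. The first summand, $\int_{\oM_{1,1}} \mathrm{Cl}_{1,1}\Coeff{a_1^0}B^0_{1,1}(\epsilon\partial_x)^2$, has a constant coefficient. In the second summand, all indices satisfy $d_1,\dots,d_n\geq 1$. Hence the products $\prod_{i=1}^n w_{d_i}$ involve only $w_1,w_2,\dots$. Moreover, the intersection numbers in front of them are constants.

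This form of $\tilde L$ is exactly where Corollary~\ref{cor:u-w} enters, and hence Lemma~\ref{lemma:push-forward}. Computing $\partial u/\partial w_k$ from \eqref{eq:Miura-0-v2} could a priori produce $w_0$ only through terms with some $d_i=0$. These terms have been eliminated, since $u-w$ is a differential polynomial in $w_1,w_2,\dots$ only. The identity $\sum_k \frac{\partial(\partial_x v)}{\partial w_k}\partial_x^k = \partial_x\circ\sum_k\frac{\partial v}{\partial w_k}\partial_x^k$ keeps the coefficients inside the subring $\mathcal{A}'\coloneqq\mathbb{C}[w_1,w_2,\dots]$ (tensored with the constants). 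This works because $\partial_x$ preserves $\mathcal{A}'$: $\partial_x$ never produces $w_0$.

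Next, I will observe that the operators of the form $\sum_i c_i\partial_x^i$ with $c_i\in\mathcal{A}'[[\epsilon]]$ form a ring closed under composition. Indeed, composition only applies $\partial_x$ to coefficients and multiplies them. This ring is also closed under formal adjoints, since $(-\partial_x)^i\circ c_i$ expands via the Leibniz rule into derivatives of $c_i$. Then, by Remark~\ref{rem:geometric-series}, $L^{-1}=\sum_{i\ge0}\tilde L^i$ lies in this ring. The series converges $\epsilon$-adically because $\tilde L = O(\epsilon^2)$, so each coefficient of $\epsilon^{2g}$ is a finite sum. It follows that $(L^{-1})^*$ also lies in this ring, and hence so does $L^{-1}\partial_x(L^{-1})^*=\mathrm{P}$.

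The only step that needs any care is the first one: confirming that no $w_0$ sneaks into $\tilde L$, which is settled by Corollary~\ref{cor:u-w}. The remaining steps are routine closure properties.
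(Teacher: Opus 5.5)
Your proposal is correct and follows the same route as the paper, whose proof consists only of the remark that the statement is a direct consequence of Corollary~\ref{cor:u-w}: because $u-w$ does not depend on $w_0$, the coefficients of $L$ lie in $\mathbb{C}[w_1,w_2,\dots]$, and $\mathrm{P}=L^{-1}\partial_x(L^{-1})^*$ inherits this property. You additionally spell out what the paper leaves implicit, namely that operators with such coefficients are closed under composition, formal adjoints, and the $\epsilon$-adically convergent geometric series for $L^{-1}$.
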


\begin{proof} It is a direct corollary of Cor.~\ref{cor:u-w}.
\end{proof}

Notice that any monomial in $\mathrm{P}$ has homogeneous polynomial dependence on the variables $e_{2h+1}$ with natural degree 
$\deg_e = \frac12 (\deg -1) + \deg_w$,
where $\deg$ denotes the standard degree in $x$-derivatives, including those appearing in $\partial_x$, and $\deg_w$ is the degree in the variables $w_i$. 

\subsection{Example: \texorpdfstring{$\Lambda(-r)$}{Lambda(-r)}} 
In this section we use Eq.~\eqref{eq:FormulaForP} to explicitly compute $\mathrm{P}$ for the cohomological field theory $\{\mathrm{Cl}_{g,n} = \Lambda(-r)\}$, hence reproducing and proving Eq.~\eqref{eq:bracket-ILW}. 

It is more convenient here to use Eq.~\eqref{eq:Miura-1}. Notice that, since the Chow degree of $\Lambda(-r)$ on $\oM_{g,n}$ is at most $g$, for the dimensional reason in Eq.~\eqref{eq:Miura-1} we must have $n=1$  and the only contribution from $\mathrm{Cl}_{g,n}$ to the integrals defining the Miura transformation is $(-r)^g \lambda_g$. Thus we have
\begin{align} \label{eq:tilde-L}
    \tilde L = \sum_{g=1}^\infty (-r\epsilon^2 \partial^2_x)^g \int_{\oM_{g,2}} \lambda_g \Coeff{a_1^{2g-1}} B^1_{g,1}.
\end{align}
Recall Eq.~\eqref{eq:B-definition}. Note that for the bamboo trees (the only ones that contribute to $B^1_{g,1}$) there is a unique level function $\ell$ that measures the distance to the root vertex, so we can specialize~\eqref{eq:B-definition} as
\begin{align}	\label{eq:Bg11}
    B_{g,1}^{1}=\sum_{\substack{T\in\SRT_{g,1,1},\, d\in\mathcal{D}(T)\\
				\forall i<\ell(T)\colon d^{\mathrm{lvl}}(i)\leq2g^{\mathrm{lvl}}(i)-1
			}
		}(-1)^{|E(T)|}\biggl(\prod_{e\in E(T)}a(e)\biggr)(\mathrm{gl}_{T})_{*}\bigotimes_{v\in V(T)}\Psi(v)_{d(v)}.
\end{align}
In the same vein, we define
\begin{align}	B_{g,1}^{1,<}\coloneqq\sum_{\substack{T\in\SRT_{g,1,1},\, d\in\mathcal{D}(T)\\
				\forall i<\ell(T)\colon d^{\mathrm{lvl}}(i)<2g^{\mathrm{lvl}}(i)-1
			}
		}(-1)^{|E(T)|}\biggl(\prod_{e\in E(T)}a(e)\biggr)(\mathrm{gl}_{T})_{*}\bigotimes_{v\in V(T)}\Psi(v)_{d(v)}.
\end{align}

\begin{lemma} We have:
\begin{align} \label{eq:sum-tilde-L}
    \sum_{k=1}^\infty \tilde L^k = \sum_{g=1}^\infty (-r\epsilon^2 \partial_x^2)^g \int_{\oM_{g,2}} \lambda_g \Coeff{a_1^{2g-1}} B^{1,<}_{g,1}.
\end{align}
\end{lemma}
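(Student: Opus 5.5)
The plan is to decompose both sides of \eqref{eq:sum-tilde-L} along bamboo trees, and to match terms by cutting a bamboo at the levels where the level inequality is saturated.

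A bamboo $T\in\SRT_{g,1,1}$ is a chain of vertices $v_0=v_r,v_1,\dots,v_k$. The frozen leg sits at $v_0$ and the regular leg $\sigma_1$ sits at $v_k$. Every edge carries $a(e)=a_1$, so the coefficient of $a_1^{2g-1}$ just extracts the total Chow degree $2g-1$. For each term of $B^1_{g,1}$ in \eqref{eq:Bg11}, look at the set of levels $i<\ell(T)$ where equality $d^{\mathrm{lvl}}(i)=2g^{\mathrm{lvl}}(i)-1$ holds. Cutting the bamboo at the edges directly after these levels splits $T$ into consecutive sub-bamboos $T^{(1)},\dots,T^{(k)}$ of genera $g_1+\dots+g_k=g$. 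Each sub-bamboo satisfies the strict inequalities at all its internal levels, so it is a term of $B^{1,<}_{g_j,1}$.

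We must check that the degrees and signs match. Set $D_j=\sum_{v\in T^{(j)}}d(v)+|V(T^{(j)})|-1$. Equality at the cut levels gives $D_j = 2g_j-1$ for every block. Hence $d(T)$ (summed over blocks, with one extra unit for each cut edge) is $2g-1$, which is the total degree on the left. Conversely, any concatenation of strict pieces of total degrees $2g_j-1$ satisfies the non-strict inequality, with equality exactly at the junctions. This gives a bijection between terms of $B^1_{g,1}$ and ordered tuples of terms of the $B^{1,<}_{g_j,1}$. The sign $(-1)^{|E|}$ and the factor $\prod a(e)$ factor accordingly; each cut edge contributes $-a_1$.

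Next we compute the integral of $\lambda_g$ against a glued class. On a bamboo the gluing map is the boundary map of the chain. The restriction of $\lambda_g$ to a compact-type boundary stratum is $\prod_j\lambda_{g_j}$, so the integral factorizes as
\begin{align*}
\int_{\oM_{g,2}}\lambda_g\,(\mathrm{gl}_T)_*\bigotimes\Psi = \prod_j \int \lambda_{g_j}\,(\mathrm{gl}_{T^{(j)}})_*\bigotimes \Psi .
\end{align*}
Each cut edge contributes $-a_1$. Together with the coefficient extraction (each block has degree $2g_j-1$, and the $k-1$ cut factors supply the rest), this gives the identity
\begin{align*}
I_g \coloneqq \int\lambda_g\Coeff{a_1^{2g-1}}B^1_{g,1} = \sum_{k\ge1}(-1)^{k-1}\sum_{g_1+\dots+g_k=g}\prod_{j} I^<_{g_j},
\end{align*}
where $I^<_h$ is the analogous integral for $B^{1,<}_{h,1}$.

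The main obstacle is the sign. As written, the identity reads $\tilde L=\frac{X}{1+X}$ with $X=\sum_h(-r\epsilon^2\partial_x^2)^hI^<_h$. That would give $\sum_{k\ge1}\tilde L^k=\tilde L/(1-\tilde L)=X/(1+X-X)=X$, which is exactly \eqref{eq:sum-tilde-L}. So the inversion works precisely when the cut edges carry the sign $-1$, i.e.\ $\tilde L = X - X^2+\dots$. I would double-check this against the convention $L=1-\tilde L$ and the sign in \eqref{eq:tilde-L}. In formal power series in the commuting operator $\epsilon^2\partial_x^2$, the relation $\tilde L(1+X)=X$ then gives $\sum_k\tilde L^k=X$.

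A second point needing care is that for $k>1$ the boundary splitting must respect the frozen leg and the regular leg ordering, and that genus-$1$ blocks (which have $\oM_{1,2}$ vertices) obey the same degree count. I would treat these by checking the case $g=1,2$ explicitly.
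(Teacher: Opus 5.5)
Your argument is correct and uses the same mechanism as the paper: bamboo terms are concatenated at levels where $d^{\mathrm{lvl}}(i)=2g^{\mathrm{lvl}}(i)-1$, each junction contributes $-a_1$ through $(-1)^{|E(T)|}\prod_e a(e)$, and $\lambda_g$ factorizes on compact-type strata; so the sign you flag is already forced by \eqref{eq:Bg11}, and the explicit $g=1,2$ checks are unnecessary. The only difference is bookkeeping: you cut at all saturated levels to get $\tilde L = X/(1+X)$ and then invert the geometric series, whereas the paper expands $\sum_k \tilde L^k$ directly over arbitrary subsets $J$ of saturated levels and kills every $(T,d)$ with a nonempty saturated set via $\sum_J(-1)^{|J|}=0$.
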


\begin{proof} Notice that the product $\int_{\oM_{g,2}} \lambda_g 
B^1_{g,1} \cdot \int_{\oM_{h,2}} \lambda_h 
B^1_{h,1} \cdot a_1$ can be described, by concatenation of the involved bamboo trees, as 
\begin{align}
    -\int_{\oM_{g+h,2}} \lambda_{g+h} \sum_{\substack{T\in\SRT_{g+h,1,1},\, d\in\mathcal{D}(T)\\
				\forall i<\ell(T)\colon d^{\mathrm{lvl}}(i)\leq2g^{\mathrm{lvl}}(i)-1 \\
                \exists j\colon g^{\mathrm{lvl}}(j) = g, \, d^{\mathrm{lvl}}(j) = 2g-1
			}
		}(-1)^{|E(T)|}\biggl(\prod_{e\in E(T)}a(e)\biggr)(\mathrm{gl}_{T})_{*}\bigotimes_{v\in V(T)}\Psi(v)_{d(v)}.
\end{align}
In other words, there exists a special level (which is the same as a special edge, for the bamboo trees), where $g^{\mathrm{lvl}}(g) = g$, $d^{\mathrm{lvl}}(j) = 2g-1$, and the sign is adjusted by extra $(-1)$. Therefore, 
\begin{align}
& \sum_{k=1}^\infty \tilde L^k =    \sum_{g=1}^\infty (-r\epsilon^2 \partial_x^2)^g \int_{\oM_{g,2}} \lambda_g \times 
\\ \notag &  \Coeff{a_1^{2g-1}} \sum_{\substack{T\in\SRT_{g,1,1},\, d\in\mathcal{D}(T)\\
				\forall i<\ell(T)\colon d^{\mathrm{lvl}}(i)\leq2g^{\mathrm{lvl}}(i)-1
			}
		}(-1)^{|E(T)|}\biggl(\prod_{e\in E(T)}a(e)\biggr)(\mathrm{gl}_{T})_{*}\bigotimes_{v\in V(T)}\Psi(v)_{d(v)} W(T,d),
\end{align}
where $W(T,d)$ is given by a sum of the sign $(-1)^{|J|}$ over all subsets $J$ of $\{j<\ell(T) \,|\, d^{\mathrm{lvl}}(j) = 2g^{\mathrm{lvl}}(j)-1 \}$, that is, over all possible ways to represent $(T,d)$ as a concatenation of smaller trees:
\begin{align}
    W(T,d) \coloneqq \sum_{J \subset \{j<\ell(T) \,|\, d^{\mathrm{lvl}}(j) = 2g^{\mathrm{lvl}}(j)-1 \}} (-1)^{|J|}. 
\end{align}
Notice that $W(T,d) = 0$ if $\{j<\ell(T) \,|\, d^{\mathrm{lvl}}(j) = 2g^{\mathrm{lvl}}(j)-1 \}$ is non-empty. Hence, the only contributing bamboo trees to $\sum_{k=1}^\infty \tilde L^k$ are the ones in $B^{1,<}_{g,1}$.
\end{proof}

\begin{corollary} \label{cor:single-Hodge-formula} We have:
\begin{align}
    \sum_{k=0}^\infty \tilde L^k = 1+ \sum_{g=1}^\infty (-r\epsilon^2 \partial_x^2)^g \int_{\oM_{g,2}} \lambda_g \psi_1^{2g-1} 
    = \frac{1}{\calS(\sqrt{r} \epsilon \partial_x)}.
\end{align}
\end{corollary}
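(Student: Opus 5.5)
The plan is to combine the preceding lemma (Eq.~\eqref{eq:sum-tilde-L}) with an explicit identification of $\Coeff{a_1^{2g-1}} B^{1,<}_{g,1}$ against $\lambda_g$, and then evaluate the resulting $\lambda_g$-integrals with a known generating function. By Eq.~\eqref{eq:sum-tilde-L}, $\sum_{k\geq 0}\tilde L^k = 1+\sum_{g\geq 1}(-r\epsilon^2\partial_x^2)^g\int_{\oM_{g,2}}\lambda_g\Coeff{a_1^{2g-1}}B^{1,<}_{g,1}$. So the first equality in the statement reduces to showing that, after pairing with $\lambda_g$, only the one-vertex tree contributes to $\Coeff{a_1^{2g-1}}B^{1,<}_{g,1}$.

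The one-vertex contribution is $\Psi(v_r)_{2g-1}=a_1^{2g-1}\psi_1^{2g-1}$, which gives exactly $\int\lambda_g\psi_1^{2g-1}$. For a bamboo tree with at least one edge, I will use that $\lambda_g$ vanishes on the boundary divisor of curves with a non-separating node. It restricts on a separating node of genus splitting $g=g_1+g_2$ as $\lambda_{g_1}\otimes\lambda_{g_2}$.

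On a bamboo in $\SRT_{g,1,1}$ all edges are separating, so the integral factorizes over the vertices. Each vertex $v$ carries $\lambda_{g(v)}\Psi(v)_{d(v)}$ on $\oM_{g(v),3}$, or on $\oM_{g(v),2}$ at the top vertex. By dimension, a vertex of genus $g(v)$ with three marked points needs $d(v)=2g(v)$, and the root, which has two points, needs $d=2g(v)-1$.

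Next I check that these degrees violate the strict level condition. At the root level ($i=0$) we get $d^{\mathrm{lvl}}(0)=d(v_r)=2g(v_r)-1$, which is not $<2g^{\mathrm{lvl}}(0)-1$. Hence every tree with $\ell(T)\geq 1$ is excluded from $B^{1,<}_{g,1}$, or at least contributes zero after pairing. The root genus-$0$ case also needs care: a stable root of genus $0$ with $d\geq 0$ violates the condition too. I should double-check the bookkeeping with the $|\{v:\ell(v)\le i\}|-1$ term, but at level $0$ it vanishes, so the root argument is clean. This yields the middle expression.

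For the last equality I will invoke the classical formula $1+\sum_{g\geq1}t^{2g}\int_{\oM_{g,2}}\lambda_g\psi_1^{2g-1}=\frac{t/2}{\sin(t/2)}$. It follows from the $\lambda_g$ formula of Faber--Pandharipande, $\int_{\oM_{g,1}}\lambda_g\psi_1^{2g-2}=b_g$ with $\sum_g b_g t^{2g}=\frac{t/2}{\sin(t/2)}$, together with the string equation: pulling back $\lambda_g$ gives $\int_{\oM_{g,2}}\lambda_g\psi_1^{2g-1}=\int_{\oM_{g,1}}\lambda_g\psi_1^{2g-2}$. Substituting $t^2=-r\epsilon^2\partial_x^2$, i.e.\ $t=i\sqrt r\epsilon\partial_x$, gives $\frac{\sqrt r\epsilon\partial_x/2}{\sinh(\sqrt r\epsilon\partial_x/2)}=1/\calS(\sqrt r\epsilon\partial_x)$.

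The main obstacle is the vanishing step for trees with edges. I need the degree/dimension count at the root vertex to hold precisely, including the genus-zero root and the restriction of $\lambda_g$ (pulled back from $\oM_g$) to separating boundary strata. The splitting $\lambda_g|=\lambda_{g_1}\lambda_{g_2}$ follows from $c(\mathbb E)=c(\mathbb E_1)c(\mathbb E_2)$, and the top degree part is exactly $\lambda_{g_1}\lambda_{g_2}$.
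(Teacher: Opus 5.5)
Your proposal is correct and follows the paper's argument: the dimension count at the root forces $d^{\mathrm{lvl}}(0)=2g^{\mathrm{lvl}}(0)-1$, which excludes every multi-vertex tree from $B^{1,<}_{g,1}$, and the last equality is Faber--Pandharipande's Theorem~2 (your string-equation step makes explicit the passage from $\oM_{g,2}$ to $\oM_{g,1}$ that the paper leaves implicit). One small slip does not affect the argument: every vertex of a bamboo in $\SRT_{g,1,1}$ has exactly two half-edges, so each factor lives on $\oM_{g(v),2}$ and requires $d(v)=2g(v)-1$, not $2g(v)$; only the root count is actually used.
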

\begin{proof} Indeed, for the dimensional reason the integral in~\eqref{eq:sum-tilde-L} can be non-trivial only if $d^{\mathrm{lvl}}(0) = 2g^{\mathrm{lvl}}(0)-1$, which is only possible for the one-vertex graphs. The second equality is proved in~\cite[Thm.~2]{FaberPanda}.
\end{proof}

Combining Prop.~\ref{prop:FormulaForP}, Rem.~\ref{rem:geometric-series}, and Cor.~\ref{cor:single-Hodge-formula}, we immediately reproduce Eq.~\eqref{eq:bracket-ILW}:

\begin{proposition} 
The Poisson bracket for $\{\mathrm{CL}_{g,n} = \Lambda(-r)\}$ is given by $\partial_x / \calS(\sqrt{r} \epsilon \partial_x)^2$. 
\end{proposition}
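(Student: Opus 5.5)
The plan is to combine Prop.~\ref{prop:FormulaForP} with Rem.~\ref{rem:geometric-series} and Cor.~\ref{cor:single-Hodge-formula}. For $\mathrm{Cl}_{g,n}=\Lambda(-r)$, Eq.~\eqref{eq:tilde-L} shows that $\tilde L$ is a constant-coefficient operator. It is a power series in $\epsilon^2\partial_x^2$ with no dependence on $w_0,w_1,\dots$. This uses the dimension count already noted: only $n=1$ contributes to Eq.~\eqref{eq:Miura-1}, and only the top term $(-r)^g\lambda_g$ survives. Hence $L=1-\tilde L$ is an invertible constant-coefficient even operator, and by Rem.~\ref{rem:geometric-series}
\[
L^{-1}=\sum_{k\ge 0}\tilde L^k=\frac{1}{\calS(\sqrt r\,\epsilon\partial_x)}
\]
by Cor.~\ref{cor:single-Hodge-formula}.

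Next we compute the adjoint. For a constant-coefficient operator $\sum_i c_i\partial_x^i$, the adjoint is $\sum_i c_i(-\partial_x)^i$. Since $\calS(z)=z^{-1}(e^{z/2}-e^{-z/2})$ is even in $z$, $1/\calS(\sqrt r\,\epsilon\partial_x)$ involves only even powers of $\partial_x$. Therefore $(L^{-1})^*=L^{-1}$. All constant-coefficient operators commute, so Prop.~\ref{prop:FormulaForP} gives
\[
\mathrm{P}=L^{-1}\partial_x(L^{-1})^*=\frac{\partial_x}{\calS(\sqrt r\,\epsilon\partial_x)^2}.
\]

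The only step with real content is the first one. We must check that $\tilde L$, which was defined from Eq.~\eqref{eq:Miura-0-v2}, agrees with Eq.~\eqref{eq:tilde-L}, which comes from the equivalent form Eq.~\eqref{eq:Miura-1}. Concretely, we need that $\partial u/\partial w_k$ produces exactly the coefficients $\int\lambda_g\,\Coeff{a_1^{2g-1}}B^1_{g,1}$ times $\partial_x\circ\partial_x^{2g-1}$. This follows because in Eq.~\eqref{eq:Miura-1} the sum is linear in $w$: it reduces to the single term $w_{2g-1}$, with an outer $\partial_x$. After that, the result is a direct assembly of the stated lemmas. The nontrivial input, the Faber--Pandharipande evaluation of the $\lambda_g\psi_1^{2g-1}$ integrals, is already contained in Cor.~\ref{cor:single-Hodge-formula}.
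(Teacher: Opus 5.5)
Your proposal is correct and is essentially the paper's own proof, which likewise just combines Prop.~\ref{prop:FormulaForP}, Rem.~\ref{rem:geometric-series} and Cor.~\ref{cor:single-Hodge-formula}. Your additional remarks fill in details the paper leaves implicit. First, Eq.~\eqref{eq:Miura-0-v2} and Eq.~\eqref{eq:Miura-1} define the same $u$, so they give the same $L$. Second, $1/\calS(\sqrt r\,\epsilon\partial_x)$ is even in $\partial_x$, so it is self-adjoint and commutes with $\partial_x$.
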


\section{Obstructions to constancy} \label{Sec: obstructions to constancy}

Let $\mathrm{P}$ be the Poisson bracket of the Dubrovin-Zhang hierarchy associated with the cohomological field theory~\eqref{eq:classes-CohFT} parametrized by some $e_{2h-1}\in\mathbb{C}$, $h\geq 1$. Recall that, by Corollary~\ref{cor:no-w0}, $\mathrm{P}$ does not depend on $w_0$. 

The goal of this section is to present a proof of the following theorem.

\begin{theorem} \label{thm:main} If $\mathrm{P}$ is constant in $w_i$, $i\geq 1$, then $e_{2h-1} = 0$ for $h\geq 3$.
In other words, if $\mathrm{P}$ is constant in $w_i$, $i\geq 1$, then there exist $r_1,r_2,r_3\in\mathbb{C}$ such that $r_1r_2+r_1r_3+r_2r_3=0$ and $\mathrm{Cl}_{g,n} = \Lambda(r_1)\Lambda(r_2)\Lambda(r_3)$.
\end{theorem}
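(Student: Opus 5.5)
Using Proposition~\ref{prop:FormulaForP} together with Remark~\ref{rem:geometric-series}, I expand $\mathrm{P} = L^{-1}\partial_x (L^{-1})^*$ and isolate a well-chosen family of coefficients of non-constant terms. The natural family is the part of $\mathrm{P}$ that is linear in the variables $w_i$, $i\geq 1$, and of lowest possible genus for a given $e$-degree. By the homogeneity remark after Corollary~\ref{cor:no-w0}, a monomial $w_{k}\partial_x^{j}$ at order $\epsilon^{2g}$ has $\deg_e = \frac12(2g+1-1)+1 = g+1$. I will show that the coefficient of $e_{2g+1}$ (the only new generator in this degree) in such a term is a nonzero rational number times $e_{2g+1}$, plus a polynomial in $e_1,\dots,e_{2g-1}$. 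Then I argue by induction on $h\geq 3$. Assuming $e_5=\dots=e_{2h-3}=0$, the remaining polynomial in lower $e$'s must coincide with its value in the triple Hodge Calabi--Yau case, where $\mathrm{P}$ is constant by Proposition~\ref{prop:FVH}. Constancy then forces $e_{2h-1}=0$.

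\textbf{Key steps.} First, I extract the linear-in-$w$ part of $L^{-1}$. Modulo terms quadratic in the $w$'s, $L^{-1}=\sum_k \tilde L^k$, where $\tilde L = \tilde L_0 + \tilde L_1 + O(w^2)$, with $\tilde L_0$ the $w$-independent part (the $n=0$ terms) and $\tilde L_1$ the $n=1$ terms. The linear part of $L^{-1}$ is $(1-\tilde L_0)^{-1}\tilde L_1(1-\tilde L_0)^{-1}$, and that of $\mathrm{P}$ is obtained by combining it with the constant parts, including the adjoint. Second, I track the dependence on $e_{2g+1}$. In $\mathrm{Cl}_{g,n}$, the parameter $e_{2g+1}$ (equivalently $p_{2g+1}$, up to lower terms via \eqref{eq:Newton}) enters only through $\ch_{2g+1}$, and this vanishes on $\oM_{g'}$ for $g'\leq g$. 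So a term of genus $g$ that is linear in $e_{2g+1}$ can involve only the coefficient of $p_{2g+1}\ch_{2g+1}$, and this class vanishes in genus $\le g$. I therefore need to redo the degree count. Each genus-$g'$ integral contributes $e$-degree at most $3g'-3+\delta$, which is matched by Chow degree, so the top generator that can appear in genus $g$ is $e_{2g-1}$, via $\ch_{2g-1}$ (nonzero on $\oM_g$ since $\ch_{2g-1}\lambda_{g-1}\ne0$ relates to $\lambda_{g-1}^3$-type integrals). I thus look for the coefficient of $e_{2g-1}$ in the terms $w_k\partial_x^j$ at $\epsilon^{2g}$ with $\deg_e$ matched by adding one factor $e_1$ or by choosing $w$-degree appropriately. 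Concretely, I will take the coefficient of $w_{2g-2}\partial_x^{3}$ (or the most convenient single monomial) in genus $g$. This coefficient comes from the single genus-$g$ integral $\int_{\oM_{g,2}} \mathrm{Cl}_{g,2}\Coeff{a_1^{d_1}a_2^{d_2}}B^0_{g,2}$, plus products of lower-genus pieces, which involve only $e_1,\dots,e_{2g-3}$.

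Third, I compute the contribution of $\ch_{2g-1}$ to that integral. For dimension reasons ($\dim \oM_{g,2}=3g-1$, $B$-class of degree $2g-2$), the remaining Chow degree is $g+1$. The term linear in $p_{2g-1}$ is then $p_{2g-1}(2g-2)!\int \ch_{2g-1}\cdot(\text{degree-}(2-g)\ldots)$, which makes sense only for small $g$. This shows that the correct pairing is $\ch_{2g-1}$ times a class of complementary degree built from $\lambda$'s of lower parameters, or a bare $\ch_{2g-1}$ against $B$ in the appropriate genus. I expect to settle which monomial works by matching $\dim$ with the Chow degree $2g-1$ of $\ch_{2g-1}$, choosing genus $g'$ and $n$ with $3g'-3+n = 2g-1 + (2g'-2)$. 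The coefficient is then an intersection $\int \ch_{2g-1}\,\Coeff{\cdot}B^0$, which I would reduce, through Mumford's formula for $\ch$ as boundary/$\kappa,\psi$ classes and the vanishing of $\lambda_g\lambda_{g-1}$-type pieces, to a $\lambda_g$-type integral computable by \cite{FaberPanda}-style formulas, and show it is nonzero.

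\textbf{Main obstacle.} The hard step is this last one: picking a specific monomial of $\mathrm{P}$ whose coefficient is provably a nonzero multiple of the top new parameter, and evaluating that intersection number of $\ch_{2g-1}$ against the $B$-class. In the $B$-class, the leveled trees contributing nontrivially must be controlled, perhaps as in the bamboo argument for $\Lambda(-r)$ via the level inequalities, so that only one-vertex graphs survive. Everything else, namely the lower-order polynomial matching against the Calabi--Yau case via Proposition~\ref{prop:FVH} and the induction, is bookkeeping.
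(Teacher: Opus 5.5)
Your overall architecture is the paper's. You take the minimal $h\geq 3$ with $e_{2h-1}\neq 0$ and pick a non-constant monomial of $\mathrm{P}$ of $e$-degree $2h-1$. You write its coefficient as $c\,e_{2h-1}+Q(e_1,e_3)$, kill $Q$ with Proposition~\ref{prop:FVH}, and conclude from $c\neq 0$. But that last step is the whole content of the proof. Your proposal neither identifies a monomial with $c\neq 0$ nor computes $c$, and the family you commit to cannot work. A monomial at $\epsilon^{2g}$ of $w$-degree $n$ has $\deg_e=g+n$ (your own count). So a term linear in $w$ at genus $h$, such as $w_{2h-2}\partial_x^3$, has $e$-degree $h+1<2h-1$ and contains no $e_{2h-1}$ at all; your opening claim about $e_{2g+1}$ at order $\epsilon^{2g}$ fails for the same reason. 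The only linear-in-$w$ terms that see $e_{2h-1}$ sit at genus $2h-2$, where the relevant numbers are $\int_{\oM_{2h-2,2}}\ch_{2h-1}\Coeff{\cdot}B^0_{2h-2,2}$. There $\ch_{2h-1}$ is not the top Chern character, and three things break. Mumford's relation no longer forces the non-root vertices to have genus $0$. The level conditions are no longer automatic. No closed formula like Lemma~\ref{lem:Faber} applies. You do reach the correct constraint $g+n=2h-1$, $g\geq h$, but you leave the choice open.

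The choice that works is the opposite extreme: genus exactly $h$ and $w$-degree $h-1$, where $\ch_{2h-1}$ is top on $\oM_h$. Every term of $\tilde L$ has positive $e$-degree, so the $e_{2h-1}$-linear part of $\mathrm{P}$ is just $\tilde L\partial_x+\partial_x\tilde L^*$, with no $(1-\tilde L_0)^{-1}$ dressing. The computation then proceeds in four steps:
\begin{enumerate}
\item Mumford's relation plus the root level condition force a genus-$h$ root with genus-$0$ branches, fixed degrees, and level sum $(-1)^{|E(T)|}$.
\item The branches sum to $(-a_I)^{|I|-1}$ by Lemma~\ref{L-5-8}, which yields the set-partition formula of Lemma~\ref{lem:B-to-Psi}.
\item The dilaton-type Lemma~\ref{lem:Quasi-Dilaton} reduces the needed coefficients to $n=1,2$.
\item These are evaluated by Faber's socle formula for $\int\ch_{2h-1}\prod\psi_i^{d_i}$ (Lemma~\ref{lem:Faber}), not by $\lambda_g$-type formulas.
\end{enumerate}
You would also hit a parity trap. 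The coefficient of $w_1^{h-1}\partial_x^{h+2}$ carries the factor $1+(-1)^{h-1}$ from the symmetrization $\tilde L\partial_x+\partial_x\tilde L^*$, so it vanishes for even $h$. There one must use $w_1^{h-2}w_2\partial_x^{h+1}$, whose coefficient is $-\mathrm{B}_{2h}h!(h-2)/(3\cdot 2^{h-1}(2h)!)\neq 0$. Without Lemma~\ref{lemma:non-vanishing} or a substitute for it, your argument does not close.
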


We consider $\mathrm{P}$ given by Eq.~\eqref{eq:FormulaForP} as operators with the coefficients that are functions of $e_{2h-1}$, $h\geq 1$. More precisely, Eq.~\eqref{eq:FormulaForP} and the homogeneity properties of the classes $B^{0}_{g,n}$ and $\mathrm{Cl}_{g,n}$ imply that 
\begin{align}
    \mathrm{P} = \partial_x + \sum_{g=1}^\infty \sum_{n=0}^{2g-2}  \frac{\epsilon^{2g}}{n!} \sum_{\substack{d_0,\dots,d_n \geq 1\\ d_0+\cdots+d_n=2g+1}} C_{d_0; d_1,\dots,d_n} \prod_{i=1}^n w_{d_i} \partial_x^{d_0},
\end{align}
where $C_{d_0; d_1,\dots,d_n}$ are some polynomials in $e_{2h-1}$, $h\geq 1$, of homogeneous degree $\frac12(d_0+\cdots+d_n) -\frac12 +n$ with respect to the natural grading $\deg_e$ of the ring of symmetric functions.

The idea of the proof is the following. Since $\rm{P}$ is constant, the coefficients $C_{d_0; d_1,\dots,d_n}$ for $n\geq 1$ must vanish, and each vanishing gives an equation for the symmetric functions involved.  To simplify these equations, we use, independently, that Prop.~\ref{prop:FVH} implies that for $n\geq 1$ we have $C_{d_0; d_1,\dots,d_n}|_{e_{2h-1}=0,\, h\geq 3} = 0$. In this way we obtain a triangular system of equation for $e_{2h-1}$, $h\geq 3$, which implies that $e_{2h-1}$, $h\geq 3$, must vanish. 

\begin{example}
Before proceeding with the general argument, let us show that for $\mathrm{P}$ to be constant the parameter $e_5$ needs to vanish. Let $g=3$ and consider the coefficient of $\epsilon^6 w_1^2 \partial_x^5$ in $\mathrm{P}$, which is equal to $\frac 12 C_{5;1,1}$. One can compute it explicitly as $\int_{\oM_{3,3}} \mathrm{Cl_{3,3}} \Coeff{a_1a_2a_3^2} B^0_{3,3}$. The integral is nontrivial only if we take the homogeneous degree $5$ in $\mathrm{Cl_{3,3}}$, which can be computed using Eqs.~\eqref{eq:classes-CohFT} and~\eqref{eq:Newton} and is equal to 
\begin{align}
120 \ch_5 e_5 + \left( 120 \ch_5 + 3 \ch_1^2 \ch_3 \right) e_1^2 e_3 + \Big( 24 \ch_5 + \ch_1^2 \ch_3 + \frac{1}{120} \ch_1^5 \Big) e_1^5.    
\end{align}
Thus the coefficient of $\epsilon^6 w_1^2 \partial_x^5$ in $\mathrm{P}$ is equal to 
\begin{align}\label{eq:coeff-w12d5}
    \int_{\oM_{3,3}} \left(120 \ch_5 e_5 + \left( 120 \ch_5 + 3 \ch_1^2 \ch_3 \right) e_1^2 e_3 + \Big( 24 \ch_5 + \ch_1^2 \ch_3 + \frac{1}{120} \ch_1^5 \Big) e_1^5  \right) \Coeff{a_1a_2a_3^2} B^0_{3,3}.
\end{align}
However, a part of this expression, namely the integral
\begin{align} \label{eq:term without e_5}
    \int_{\oM_{3,3}} \left( \left( 120 \ch_5 + 3 \ch_1^2 \ch_3 \right) e_1^2 e_3 + \Big( 24 \ch_5 + \ch_1^2 \ch_3 + \frac{1}{120} \ch_1^5 \Big) e_1^5 \right) \Coeff{a_1a_2a_3^2} B^0_{3,3}
\end{align}
is the coefficient of $\epsilon^6 w_1^2 \partial_x^5$ in $\mathrm{P}\vert_{e_{2h-1}=0,\, h\geq 3}$. By Prop.~\ref{prop:FVH}, the Poisson bracket $\mathrm{P}\vert_{e_{2h-1}=0,\, h\geq 3}$ is given by Eq.~\eqref{eq:bracket-constant} and in particular is constant. It follows that the integral \eqref{eq:term without e_5} vanishes. Therefore, the coefficient of $\epsilon^6 w_1^2 \partial_x^5$ in $\rm{P}$ given by Eq.~\eqref{eq:coeff-w12d5} vanishes in the general case if and only if the difference between \eqref{eq:coeff-w12d5} and \eqref{eq:term without e_5} vanishes, that is,
$120 e_5 \int_{\oM_{3,3}} \ch_5 \Coeff{a_1a_2a_3^2} B^0_{3,3} = 0$. Since we can prove that $\int_{\oM_{3,3}} \ch_5 \Coeff{a_1a_2a_3^2} B^0_{3,3} \not=0$, a necessary condition for the bracket $\mathrm{P}$ to be constant is $e_5=0$.
\end{example}

\subsection{Linear part of \texorpdfstring{$\mathrm{P}$}{P}} We consider the part of $\mathrm{P}$ that is linear in the variables 
$e_{2h-1}$.

\begin{lemma} We have:
\begin{align}
    \mathrm{P} & = \partial_x + e_1 \cdot 2 \epsilon^2  \int_{\oM_{1,1}} \ch_1 \Coeff{a_1^0} B^{0}_{1,1} \partial_x^3 
    \\ \notag & \quad + \sum_{h=2}^\infty e_{2h-1} (2h-1)!\sum_{\substack {g\geq h, n\geq 0 \\ g+n = 2h-1}} \frac{\epsilon^{2g}}{n!} \sum_{\substack {d_1,\dots,d_{n+1}\geq 1 \\ d_1+\cdots+d_{n+1} = 2g-2}}\int_{\oM_{g,n+1}} \ch_{2h-1} \Coeff{\prod_{i=1}^{n+1}a_i^{d_i}} B^0_{g,n+1} \times
    \\ \notag & \qquad \qquad \partial_x^2 \circ \bigg( \prod_{i=1}^n w_{d_i} \partial_x^{d_{n+1}-1} + (-1)^{d_{n+1}} \partial_x^{d_{n+1} -1} \circ \prod_{i=1}^n w_{d_i} \bigg) \circ \partial_x^2
    \\ \notag & \quad + O(e_{\bullet} ^2).
\end{align}
\end{lemma}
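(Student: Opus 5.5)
The plan is to expand $\mathrm{P} = L^{-1}\partial_x (L^{-1})^*$ from Prop.~\ref{prop:FormulaForP} to first order in the parameters $e_{2h-1}$.

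First, I would linearize the CohFT. By \eqref{eq:classes-CohFT} and \eqref{eq:Newton}, once we impose $e_{2k}=0$, the power sums satisfy $p_{2h-1} = e_{2h-1} + O(e_\bullet^2)$. Indeed, $\tanh$ is the identity to first order, so $\sum_k e_{2k-1}t^{2k-1} = \sum_h p_{2h-1}t^{2h-1}/(2h-1)+O(p^2)$. This gives $p_{2h-1} = (2h-1)e_{2h-1}+O(e^2)$, and the $p_{2h}$ are quadratic. Hence
\begin{align*}
\mathrm{Cl}_{g,n} = 1 + \sum_{h\geq 1} (2h-2)!\,(2h-1)\, e_{2h-1}\ch_{2h-1} + O(e_\bullet^2) = 1+\sum_{h\geq1}(2h-1)!\,e_{2h-1}\ch_{2h-1}+O(e_\bullet^2).
\end{align*}
The sign $(-1)^{2h-2}=1$ drops out.

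Second, I would substitute this into $\tilde L$. The constant term $1$ contributes nothing. The relevant integrals $\int_{\oM_{g,n+1}}\Coeff{\cdot}B^0_{g,n+1}$ have degree $2g-2$ in the $a_i$, hence Chow degree $2g-2$. This is strictly less than $\dim\oM_{g,n+1}=3g-2+n$ for $g\geq1$, $n\ge0$, so these integrals vanish. Therefore $\tilde L = O(e_\bullet)$, and $L^{-1} = 1+\tilde L + O(e_\bullet^2)$ by Rem.~\ref{rem:geometric-series}. It follows that $\mathrm{P} = \partial_x + \tilde L\partial_x + \partial_x \tilde L^* + O(e_\bullet^2)$, where only the linear part of $\tilde L$ matters. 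In that linear part the class $\ch_{2h-1}$ must fill the missing dimension: $2h-1 = (3g-2+n)-(2g-2) = g+n$. This gives the constraint $g+n=2h-1$, and $g\ge h$ follows from $\ch_{2h-1}=0$ for $2h-1>2g-1$.

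Third, I would handle the genus-one term separately. Here $h=1$ and $n=0$, and the term contributes $e_1\int_{\oM_{1,1}}\ch_1\Coeff{a_1^0}B^0_{1,1}\,\epsilon^2\partial_x^2$ to $\tilde L$. Adding it to its adjoint gives $2\epsilon^2(\cdot)\partial_x^3$, as claimed. For the general term, write $\tilde L_{\rm lin}=\partial_x^2\circ A$ with $A = \sum c\,\prod_{i=1}^n w_{d_i}\partial_x^{d_{n+1}}$. Then
\begin{align*}
\tilde L\partial_x + \partial_x\tilde L^* = \partial_x^2\circ A\circ\partial_x + \partial_x\circ A^*\circ\partial_x^2.
\end{align*}
Using $A^* = (-1)^{d_{n+1}}\partial_x^{d_{n+1}}\circ\prod w_{d_i}$, I would factor $\partial_x^2$ on the left and $\partial_x^2$ on the right. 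Since $d_{n+1}\geq 1$, this is possible: $A\circ\partial_x = \prod w\,\partial_x^{d_{n+1}-1}\circ\partial_x^2$, and $\partial_x\circ A^* = (-1)^{d_{n+1}}\partial_x^2\circ\partial_x^{d_{n+1}-1}\circ\prod w$. This yields exactly the bracketed expression $\partial_x^2\circ(\cdots)\circ\partial_x^2$ in the statement.

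The main obstacle is only bookkeeping. One must check that the $1/n!$ weights and the factor $(2h-1)!$ match the normalization of $\tilde L$, and that the $g=1$ term (with $B^0_{1,1}$ not of the $\partial_x^2\circ$ form) is treated consistently. I would verify the identification $p_{2h-1}=(2h-1)e_{2h-1}$ at linear order carefully, since an error there would change the constant.
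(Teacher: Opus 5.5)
Your proposal is correct and follows essentially the same route as the paper: linearize $\mathrm{Cl}_{g,n}$ using $p_{2h-1}=(2h-1)e_{2h-1}+O(e_\bullet^2)$, then substitute into $\mathrm{P}=L^{-1}\partial_x(L^{-1})^*$. Your opening claim $p_{2h-1}=e_{2h-1}+O(e_\bullet^2)$ is a slip, but you correct it in the next line. The extra steps the paper leaves implicit all check out: the $e$-independent part of $\tilde L$ vanishes by dimension, so $\mathrm{P}=\partial_x+\tilde L\partial_x+\partial_x\tilde L^*+O(e_\bullet^2)$; the constraints $g+n=2h-1$ and $g\geq h$ hold; and the adjoint factorization, using $d_{n+1}\geq 1$, produces the form $\partial_x^2\circ(\cdots)\circ\partial_x^2$.
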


\begin{proof} We start from $\mathrm{P}$ given by Eq.~\eqref{eq:FormulaForP}. By Newton identities~\eqref{eq:Newton} reduced to relations between $e_{2h-1}$ and $p_{2k-1}$, $h,k\geq 1$, we have 
$p_{2k-1} = (2k-1) e_{2k-1} + O(e_{\bullet} ^2)$, $k\geq 1$ where the rest in this expression is a polynomial in $e_{2h-1}$, $h<k$ of homogeneous degree $2k-1$ with respect to the natural grading of the ring of symmetric functions. Thus we can just consider the linear part of $\{\rm{Cl}_{g,n}\}$ in $p_{2h-1}$, which is equal to $\sum_{h=1}^\infty (2h-2)! p_{2h-1} \ch_{2h-1}$, and then formally replace $p_{2h-1}$ by $(2h-1) e_{2h-1}$, $h\geq 1$. The result follows. 
\end{proof}

Let us write the coefficient of $(2h-1)! e_{2h-1}$, $h\geq 2$, in $\mathrm{P}$, namely
\begin{align}
   & \sum_{\substack {g\geq h, n\geq 0 \\ g+n = 2h-1}} \frac{\epsilon^{2g}}{n!} \sum_{\substack {d_1,\dots,d_{n+1}\geq 1 \\ d_1+\cdots+d_{n+1} = 2g-2}}\int_{\oM_{g,n+1}} \ch_{2h-1} \Coeff{\prod_{i=1}^{n+1}a_i^{d_i}} B^0_{g,n+1} \times
    \\ \notag & \qquad \qquad \partial_x^2 \circ \bigg( \prod_{i=1}^n w_{d_i} \partial_x^{d_{n+1}-1} + (-1)^{d_{n+1}} \partial_x^{d_{n+1} -1} \circ \prod_{i=1}^n w_{d_i} \bigg) \circ \partial_x^2
\end{align}
as the finite sum $\sum_{g=h}^{2h-1}\epsilon^{2g} P_{h,2g+1}$  where $P_{h,2g+1}$ is a homogeneous operator of degree $2g+1$. For $g = 2h-1$ we have that $P_{h, 4h-1}$ is the constant homogeneous operator of degree $4h-1$ given by
\begin{equation}
    P_{h,4h-1} = 
     2 \int_{\oM_{2h-1,1}} \ch_{2h-1} \Coeff{a_1^{4h-4}} B^0_{2h-1,1} \partial_x^{4h-1},
\end{equation}
while for $g=h,\dots, 2h-2$, $P_{h,2g+1}$ is the following homogeneous operator of order at most $3g-2h+2$
\begin{align}
     P_{h,2g+1}& =\frac{1}{(2h-g-1)!} \sum_{\substack {d_1,\dots,d_{2h-g}\geq 1 \\ d_1+\cdots+d_{2h-g} = 2g-2}}\int_{\oM_{g,2h-g}} \ch_{2h-1} \Coeff{\prod_{i=1}^{2h-g}a_i^{d_i}} B^0_{g,2h-g} \times
    \\  & \qquad  \partial_x^2 \circ \bigg( \prod_{i=1}^{2h-g-1} w_{d_i} \partial_x^{d_{2h-g}-1} + (-1)^{d_{2h-g}} \partial_x^{d_{2h-g} -1} \circ \prod_{i=1}^{2h-g-1} w_{d_i} \bigg) \circ \partial_x^2 .\notag 
\end{align} 
Using the Leibnitz rule $\partial^k_x f=\sum_{j=0}^k \binom{k}{j}f^{(j)} \partial_x^{k-j}$, we can re-write $P_{h,2g+1}$ in the normal form $P_{h,2g+1} = \sum_{l=1}^{3g-2h+2}P_{h,2g+1; l}~ \partial_x^l $ for certain homogeneous differential polynomials $P_{h,2g+1;l}$ of differential degree $2g+1-l$. Below, we provide explicit formulas for those differential polynomials in a simple case.
\begin{example} \label{ex:key-terms}
    For $g=h\geq2$, the operator
    \begin{align}
         &P_{h,2h+1} = \frac{1}{(h-1)!}\sum_{d_h=1}^{h-1} \sum_{\substack{d_1,\dots,d_{h-1}\geq 1  \\ \sum_{j=1}^{h-1}d_j=2h-2-d_{h}}
   } \int_{\oM_{h,h}} \ch_{2h-1} \Coeff{\prod_{i=1}^{h}a_i^{d_i}} B^0_{h,h} \times
    \\ & \qquad \qquad \partial_x^2 \circ \bigg( \prod_{i=1}^{h-1} w_{d_i} \partial_x^{d_{h}-1} + (-1)^{d_{h}} \partial_x^{d_{h} -1} \circ \prod_{i=1}^{h-1} w_{d_i} \bigg) \circ \partial_x^2 \notag 
    \end{align}
    has degree $2h+1$ and order at most $h+2$. Putting it in the normal form $P_{h,2h+1}=\sum_{l=0}^{h+2}P_{h,2h+1;l} \partial_x^l$ we obtain the coefficients
    \begin{align}
        &P_{h,2h+1;h+2} = \frac{1}{(h-1)!} (1+(-1)^{h-1}) 
        \int_{\oM_{h,h}} \ch_{2h-1} \Coeff{\prod_{i=1}^{h-1}a_i~a_h^{h-1}} B^0_{h,h} 
        w_1^{h-1} 
    \end{align}
    and
\begin{align}
        P_{h,2h+1;h+1} & 
         =  \frac{1}{(h-2)!} \bigg[ (2+(-1)^{h-1}h)\int_{\oM_{h,h}} \ch_{2h-1} \Coeff{\prod_{i=1}^{h-1}a_i~a_h^{h-1}} B^0_{h,h} \\
        &\qquad\qquad + ( 1+(-1)^{h}) \int_{\oM_{h,h}} \ch_{2h-1} \Coeff{\prod_{i=1}^{h-2}a_i~a_{h-1}^2~a_h^{h-2}} B^0_{h,h} \bigg] w_1^{h-2}w_2. \notag
\end{align}
\end{example}

\subsection{Non-trivial coefficients}

Our next goal is to compute the first nontrivial coefficients in the expression above presented in Example~\ref{ex:key-terms}. We have

\begin{lemma} \label{lemma:non-vanishing} For odd $h\geq 3$, the coefficient of $(2h-1)!e_{2h-1} \epsilon^{2h} w_1^{h-1} \partial_x^{h+2}$ in $\mathrm{P}$, which is equal to
\begin{align} \label{eq:Odd-h-nonvanishing}
    \frac{1+(-1)^{h-1}}{(h-1)!} \int_{\oM_{h,h}} \ch_{2h-1} \Coeff{a_1\cdots a_{h-1} a_h^{h-1}} B^0_{h,h} 
    \overset{\text{odd\ }h}{=} \frac{\mathrm{B}_{2h} (h-1)!}{2^{h-1}\cdot\left(2h\right)!}
\end{align}
does not vanish ($\mathrm{B}_{2h}$ is the Bernoulli number). For even $h\geq 3$ this coefficient obviously vanishes, but the next to the top one, which is the coefficient of 
$(2h-1)!e_{2h-1} \epsilon^{2h} w_1^{h-2}w_2 \partial_x^{h+1}$ and is equal to
\begin{align} \label{eq:Even-h-nonvanishing}
    & \frac{2+(-1)^{h-1}h}{(h-2)!} \int_{\oM_{h,h}} \ch_{2h-1} \Coeff{a_1\cdots a_{h-1} a_h^{h-1}} B^0_{h,h}
    \\ \notag & \quad 
    + \frac{1+(-1)^{h}}{(h-2)!} \int_{\oM_{h,h}} \ch_{2h-1} \Coeff{a_1\cdots a_{h-2} a_{h-1}^2 a_h^{h-2}} B^0_{h,h} 
    \overset{\text{even\ }h}{=}   
       \frac{-\mathrm{B}_{2h} h! (h-2)}{3\cdot 2^{h-1}\cdot\left(2h\right)!},   
\end{align}
does not vanish.
\end{lemma}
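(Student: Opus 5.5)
The integrals involved are of the form $\int_{\oM_{h,h}} \ch_{2h-1}\,\Coeff{\prod_i a_i^{d_i}} B^0_{h,h}$ with $\sum d_i = 2h-2$. The class $\ch_{2h-1}$ has Chow degree $2h-1$ and $\dim \oM_{h,h}=4h-3$, so only the degree $2h-2$ part of $B^0_{h,h}$ matters. The plan is to reduce these integrals to explicit Hodge integrals with $\psi$-classes on few vertices. The key input is Mumford's formula for $\ch_{2h-1}$. Its restriction to a boundary stratum $\mathrm{gl}_T$ splits as a sum of $\ch_{2h-1}$ on the vertices; the nodal (irreducible) correction term does not arise for trees. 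Moreover, $\ch_{2h-1}$ vanishes on any vertex of genus $<h$. Hence in the sum~\eqref{eq:B-definition} only trees with a single vertex $v_0$ of genus $h$ carrying $\ch_{2h-1}$ contribute, and all other vertices have genus $0$.

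Step 1: use the level conditions to cut down the trees. A genus-$0$ vertex $v$ carries $\Psi(v)_{d(v)}$ on $\oM_{0,|H(v)|}$. The level inequalities $d^{\mathrm{lvl}}(i)\le 2g^{\mathrm{lvl}}(i)-2$ for $i<\ell(T)$ force the genus-$0$ levels before $v_0$ to have negative budget, so no such levels exist. Therefore $v_0$ is the root. The remaining genus-$0$ vertices sit above it, with their $\psi$-integrals computed by the string/dilaton-type formula
\[
\int_{\oM_{0,k}}\prod(1-b_j\psi_j)^{-1}=\Big(\sum b_j\Big)^{k-3}.
\]
Dimension counting on $v_0$ then leaves, for the monomial $a_1\cdots a_{h-1}a_h^{h-1}$, essentially two families. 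In the first, all legs sit on the root. In the second, pairs (or groups) of degree-one legs are collected on genus-$0$ trees attached to the root, with edge weights $a(e)$ and alternating signs $(-1)^{\ell(T)}$ summed over level functions. I expect these level sums to collapse, as in the proof of Lemma~\ref{lemma:push-forward}, into simple combinatorial factors.

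Step 2: compute the root contributions. After Step 1, each term becomes a Hodge integral of the form
\[
\int_{\oM_{h,k}}\ch_{2h-1}\prod_j\psi_j^{m_j}.
\]
Using $\ch_{2h-1}$ expressed through $\lambda$'s, together with the string and dilaton equations, these reduce to the one-point integral
\[
\int_{\oM_{h,1}}\ch_{2h-1}\psi_1^{h-1}.
\]
Mumford/Faber–Pandharipande give this as $\frac{(-1)^{h-1}\mathrm{B}_{2h}}{(2h)!}$ up to the standard normalisation; it also follows from the $\lambda_g$-type formula used in Cor.~\ref{cor:single-Hodge-formula}. The multi-point versions are multinomial in the $a_i$, since $\ch_{2h-1}\psi$-integrals pulled back along forgetful maps obey string/dilaton. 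Extracting $\Coeff{a_1\cdots a_{h-1}a_h^{h-1}}$ and $\Coeff{a_1\cdots a_{h-2}a_{h-1}^2a_h^{h-2}}$ then produces explicit rational multiples of $\mathrm{B}_{2h}$.

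Step 3: assemble the combinations~\eqref{eq:Odd-h-nonvanishing} and~\eqref{eq:Even-h-nonvanishing} and simplify to the stated closed forms. Nonvanishing then follows because $\mathrm{B}_{2h}\neq0$, and $h-2\neq0$ for even $h\ge4$. The main obstacle is Step 1: controlling the signed sum over leveled trees with genus-$0$ vertices above the root, where the combinatorics of level functions must telescope to a clean factor such as $(h-1)!/2^{h-1}$. I would first try to prove that only trees where each non-root vertex is trivalent with two degree-one legs survive, and compute their weighted count by induction on $h$.
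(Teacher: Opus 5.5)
Your opening reduction is exactly how the paper begins (Lemma~\ref{L-5-6}): only trees with a genus-$h$ root and genus-$0$ vertices elsewhere contribute, and each genus-$0$ vertex gives $a(v)^{|H_+(v)|-2}$. After that, however, the step that does the real work is missing, and the simplification you propose to prove is false.

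The level sums do collapse, $\sum_{\ell\in\mathcal{L}(T)}(-1)^{\ell(T)}=(-1)^{|E(T)|}$, though this still needs the inductive count of insertions of a new vertex. But then genus-$0$ subtrees of every shape contribute, including higher-valence vertices and subtrees containing $\sigma_h$. The answer comes from a global cancellation among all of them.

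The missing idea is Lemma~\ref{L-5-8}: the signed sum over all genus-$0$ branches with leg set $I$ equals $(-a_I)^{|I|-1}$, where $a_I=\sum_{i\in I}a_i$. This gives the ordered-set-partition formula of Lemma~\ref{lem:B-to-Psi}. From it, the quasi-dilaton identity $\Coeff{a_n^1}\int_{\oM_{h,n}}\ch_{2h-1}B^0_{h,n}=(h-n+1)\int_{\oM_{h,n-1}}\ch_{2h-1}B^0_{h,n-1}$ (Lemma~\ref{lem:Quasi-Dilaton}) strips off the degree-one variables. The result is $(h-1)!\,X$, with $X\coloneqq\int_{\oM_{h,1}}\ch_{2h-1}\psi_1^{h-1}$, for the first coefficient, and $(h-2)!\int_{\oM_{h,2}}\ch_{2h-1}\Coeff{a_1^2a_2^{h-2}}B^0_{h,2}$ for the second.

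Your shortcut already fails for $h=3$. The coefficient of $a_1a_2a_3^2$ is $(12-10-15-15+30)X=2X$, the terms coming from the blocks $\{1,2,3\}$, $\{1,2\}|\{3\}$, $\{1,3\}|\{2\}$, $\{2,3\}|\{1\}$, $\{1\}|\{2\}|\{3\}$. By contrast, the all-on-root tree together with the trivalent vertex carrying $\sigma_1,\sigma_2$ gives $30X-10X=20X$, and the remaining trees contribute $-18X\neq 0$.

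Step~2 also does not go through. String and dilaton only remove insertions $\psi^0$ and $\psi^1$. That suffices in the odd case, since every root integral there is $\int\ch_{2h-1}\psi_{j_0}^{h-1}\prod_{j\neq j_0}\psi_j$. For even $h\geq 4$, however, you need $\int_{\oM_{h,2}}\ch_{2h-1}\psi_1^2\psi_2^{h-2}$, which has neither. The required input is Faber's socle formula (Lemma~\ref{lem:Faber}, due to Getzler--Pandharipande), in which the exponents enter through $\prod_i 1/(2d_i-1)!!$.

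These integrals are not multinomial. That is the $\lambda_g$ pattern behind Cor.~\ref{cor:single-Hodge-formula}, which is irrelevant here because $\ch_{2h-1}$ is a multiple of $\lambda_{h-1}\lambda_h$. For instance, dilaton gives $\int_{\oM_{h,2}}\ch_{2h-1}\psi_1^{h-1}\psi_2=(2h-1)X$, whereas a multinomial formula would give $hX$.

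The even-$h$ value hinges on $\int_{\oM_{h,2}}\ch_{2h-1}\psi_1^2\psi_2^{h-2}=\frac{(2h-1)(2h-3)}{3}X$. This produces $\frac{5h^2-13h+6}{6}X$ for the second integral and finally $\frac{2h(h-2)}{3}X$ for the even-$h$ coefficient.

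One caution on constants. Since $\ch_{2g-1}=\frac{(-1)^{g-1}}{(2g-1)!}\lambda_{g-1}\lambda_g$ (e.g.\ $\int_{\oM_{2,1}}\ch_3\psi_1=-\frac{1}{17280}$), the socle formula for $\ch_{2g-1}$ carries an extra factor $\frac{(-1)^{g-1}}{(2g-1)!}$ relative to the $\lambda_g\lambda_{g-1}$ normalization displayed in Lemma~\ref{lem:Faber}. This rescales both closed forms by the same nonzero constant and does not affect the nonvanishing.
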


In order to prove this Lemma, we need to develop some computational tools to work efficiently with the integrals $\int_{\oM_{g,n}} \ch_{2g-1} B^0_{g,n}$, which are homogeneous polynomials of degree $g-2+n$ in $a_1,\dots,a_n$. We collect the necessary results in Sec.~\ref{subsubsec:Preliminary} and then provide the computations that prove the lemma in Sec.~\ref{subsubsec:ProofLemmaNonvanishing}.

\subsubsection{Preliminary results} \label{subsubsec:Preliminary}

Here we collect a number of observations that allow us to efficiently compute the values of the intersection numbers in Lemma~\ref{lemma:non-vanishing}. We first obtain, in Lemma~\ref{lem:B-to-Psi}, an explicit expression for the integrals $\int_{\oM_{h,n}} \ch_{2h-1} B^0_{h,n}$, and then use it to establish in Lemma~\ref{lem:Quasi-Dilaton} a dilaton-like property for these integrals.

\begin{lemma} \label{L-5-6}
For any $h\geq 2$ and $n\leq h$ we have:
    \begin{align} \label{eq:reduction-to-simple-trees}
       \int_{\oM_{h,n}} \ch_{2h-1} B^0_{h,n} = & \sum_{T\in\SRT_{h,n,0} } (-1)^{|E(T)|
		}
        \int_{\oM_{h,|H_+(v_r)|}} \ch_{2h-1}  \Psi(v_r)
        \prod_{v\in V_{nr}(T)} a(v)^{|H_+(v)|-1}.
    \end{align}
\end{lemma}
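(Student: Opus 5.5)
The plan is to insert the definition~\eqref{eq:B-definition} of $B^0_{h,n}$ (so $m=0$) into the integral against $\ch_{2h-1}$ and to use two standard facts about $\ch_{2h-1}$. First, $\ch_{2h-1}$ restricted to a boundary stratum splits additively over the vertices, and it vanishes on any vertex of genus $g(v)<h$ because $\ch_{2k-1}=0$ for $2k-1>2g-1$. Second, $\ch_{2h-1}$ vanishes on the divisor of irreducible curves with a nonseparating node. Since the trees in $\SRT$ contain no loops, for each leveled tree $(T,\ell)$ the pullback $\mathrm{gl}_T^*\ch_{2h-1}$ equals the sum over vertices of $\ch_{2h-1}$ at that vertex. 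Only a vertex of genus $h$ contributes, so exactly one vertex $v_0$ carries all the genus, and every other vertex has genus $0$.

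Next comes dimension counting. On $\oM_{h,n}$ the class $\ch_{2h-1}$ has degree $2h-1$, so the rest of the class must have degree $3h-3+n-(2h-1)=h-2+n$. For a tree with $|E(T)|$ edges, the pushforward contributes codimension $|E(T)|$, and the vertex classes contribute $\sum_v d(v)$. Each genus-$0$ vertex $v$ carries $\Psi(v)_{d(v)}$ on $\oM_{0,|H(v)|}$, and its integral is nonzero only if $d(v)=|H(v)|-3=|H_+(v)|-2$. In that case the string equation and the multinomial formula give $\int_{\oM_{0,|H(v)|}}\prod_{h\in H_+(v)}(1-a(h)\psi)^{-1}=a(v)^{|H_+(v)|-2}$. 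At $v_0$ the degree is then forced to be $d(v_0)=\dim\oM_{h,|H(v_0)|}-(2h-1)$.

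The key step is to show that the level constraint $d^{\mathrm{lvl}}(i)\le 2g^{\mathrm{lvl}}(i)-2$ for $i<\ell(T)$ forces $v_0$ to be the root, and then to fix which level functions survive. For any level below that of $v_0$, the accumulated genus is $0$, so the constraint requires $d^{\mathrm{lvl}}(i)\le -2$, which is impossible since $d^{\mathrm{lvl}}(0)\ge 0$. Therefore $v_0=v_r$ with $\ell(v_r)=0$. Here $d(v_r)=\dim\oM_{h,|H(v_r)|}-(2h-1)=h-2+|H(v_r)|$, and the constraint at level $0$ reads $h-2+|H(v_r)|\le 2h-2$, i.e. $|H(v_r)|\le h$. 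This is where the hypothesis $n\le h$ enters, because $|H_+(v_r)|\le n$.

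For the higher levels $i\ge 1$, with $g^{\mathrm{lvl}}=h$, I need $d^{\mathrm{lvl}}(i)\le 2h-2$. Each genus-$0$ vertex adds $|H_+(v)|-2+1=|H_+(v)|-1$ to $d^{\mathrm{lvl}}$. Summing over all vertices gives a total of $h-2+n$, and I have to check that every intermediate level stays below $2h-2$ when $n\le h$. I expect this monotone bookkeeping to be the main obstacle. It must hold for all level functions, and the boundary case $n=h$ should need care with equality at the final level, which is unconstrained.

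Once all level functions are admissible, the sum over $\ell\in\mathcal{L}(T)$ of $(-1)^{\ell(T)}$ for a fixed tree must collapse to $(-1)^{|E(T)|}$. I would prove this by the same insertion and cancellation mechanism as in the proof of Lemma~\ref{lemma:push-forward}, or by induction on the number of vertices, removing a maximal leaf-vertex. Equivalently, the signed count of surjective order-preserving level assignments of a rooted forest poset is $(-1)^{\#\text{non-root vertices}}$; this follows from the standard identity that alternating sums of ordered set partitions equal $(-1)^{k}$.

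Finally, I combine the factors. The edge factor $\prod_e a(e)=\prod_{v\in V_{nr}}a(v)$, multiplied by $a(v)^{|H_+(v)|-2}$, gives $a(v)^{|H_+(v)|-1}$ for each non-root vertex. This yields~\eqref{eq:reduction-to-simple-trees}, where $\Psi(v_r)$ is automatically cut down to the correct degree by the integral.
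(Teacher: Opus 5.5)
Your proposal is correct and follows the paper's proof essentially step by step. Additivity of $\ch_{2h-1}$ on tree strata forces a single genus-$h$ vertex, and the level-$0$ inequality forces that vertex to be the root. Dimension counting then fixes $d$ uniquely, with each genus-$0$ vertex contributing $a(v)^{|H_+(v)|-2}$. Finally, the signed sum over level functions equals $(-1)^{|E(T)|}$ by the same leaf-attaching induction the paper uses.

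The step you flag as the main obstacle is immediate. Stability gives $|H_+(v)|\ge 2$ at every non-root vertex, so $d^{\mathrm{lvl}}$ increases strictly with the level up to its final value $h-2+n\le 2h-2$. Hence every constraint for $i<\ell(T)$ holds automatically, even strictly, and the case $n=h$ needs no special care.
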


\begin{proof} Note that
\begin{align}
    	 & \int_{\oM_{h,n}} \ch_{2h-1} B^0_{h,n} = \int_{\oM_{h,n}} \ch_{2h-1} \big( B^0_{h,n} \big)_{h+n-2}
         \\
         \notag & = \int_{\oM_{h,n}} \ch_{2h-1} \sum_{\substack{T\in\SRT_{h,n,0},\, d\in\mathcal{D}(T),\, \ell\in\mathcal{L}(T)\\
				\forall i<\ell(T)\colon d^{\mathrm{lvl}}(i)\leq2g^{\mathrm{lvl}}(i)-2 \\ d^{\mathrm{lvl}}(\ell(T)) = h+n-2 }
		}(-1)^{\ell(T)}\biggl(\prod_{e\in E(T)}a(e)\biggr)(\mathrm{gl}_{T})_{*}\bigotimes_{v\in V(T)}\Psi(v)_{d(v)}.
\end{align}
Mumford's relations~\cite{Mumford} imply $(\mathrm{gl}_{T})^{*}\ch_{2h-1}$ is nontrivial if and only if the genus of one of the vertices of $T$ is $h$ and the genera of all other vertices is $0$. The level condition on the root vertex $d^{\mathrm{lvl}}(0)\leq 2g^{\mathrm{lvl}}(0)-2$ implies that the genus $h$ vertex is the root. For the integral to be non-trivial, for dimensional reasons, we must have $d(v_r) = h+|H_+(v_r)|-2$, and $d(v) = |H_+(v)|-2$ for all other vertices $v\not= v_r$. This fixes the unique possible $d\in\mathcal{D}(T)$. Then since $g^{\mathrm{lvl}}(i)=h$ for all $i=0,\dots,\ell(T)$, the inequalities $d^{\mathrm{lvl}}(i)\leq2g^{\mathrm{lvl}}(i)-2$ are satisfied automatically for all $i\leq \ell(T)$ for every $\ell\in\mathcal{L}(T)$. Thus, we can rewrite this expression as
\begin{align}
        	 \int_{\oM_{h,n}} \ch_{2h-1} B^0_{h,n} 
         & = \sum_{\substack{T\in\SRT_{h,n,0}}
		}\biggl(\prod_{e\in E(T)}a(e)\biggr)
        \int_{\oM_{h,|H_+(v_r)|}} \ch_{2h-1}  \Psi(v_r)
        \times
        \\ \notag 
        & \qquad \qquad \prod_{v\in V_{nr}(T)} 
        \int_{\oM_{0,|H_+(v)|+1}} \Psi(v) \sum_{\ell\in\mathcal{L}(T)}(-1)^{\ell(T)}.
\end{align}
Note that $\int_{\oM_{0,|H_+(v)|+1}} \Psi(v) = a(v)^{|H_+(v)|-2}$ and $\prod_{e\in E(T)}a(e) = \prod_{v\in V_{nr}(T)} a(v)$. Notice also that for any $T$ we have $\sum_{\ell\in\mathcal{L}(T)}(-1)^{\ell(T)} = (-1)^{|E(T)|}$, as shown in the following lemma. Combining these three observations, we conclude.
\end{proof}

\begin{lemma}
    For any rooted tree $T$, we have that $\sum_{\ell\in\mathcal{L}(T)}(-1)^{\ell(T)} = (-1)^{|E(T)|}$.
\end{lemma}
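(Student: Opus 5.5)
Throughout, $\mathcal{L}(T)$ is the set of level functions on $T$: maps $\ell\colon V(T)\to\ZZ_{\geq 0}$ with $\ell(v_r)=0$, strictly increasing along paths away from the root, and with no empty levels. The plan is to prove $\sum_{\ell\in\mathcal{L}(T)}(-1)^{\ell(T)}=(-1)^{|E(T)|}$ by induction on $|V(T)|$, using a sign-reversing involution that removes a leaf.

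The base case is a single vertex. There $\mathcal{L}(T)$ consists of the zero function, so the sum is $1=(-1)^0$.

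For the induction step, suppose $|V(T)|\geq 2$. Choose a leaf $u\neq v_r$, that is, a non-root vertex with no descendants other than itself, and let $p$ be its parent. Let $T'$ be the rooted tree obtained by deleting $u$ together with its edge, so that $|E(T')|=|E(T)|-1$. Restricting a level function $\ell\in\mathcal{L}(T)$ to $V(T')$ and then removing the level $\ell(u)$ if $u$ was its only occupant gives a level function $\ell'\in\mathcal{L}(T')$. This map is well defined because the monotonicity conditions on $T'$ are inherited, and collapsing an emptied level preserves strict inequalities along paths. It is also surjective.

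Now fix $\ell'\in\mathcal{L}(T')$ with height $N=\ell'(T')$, and set $k=\ell'(p)$. The preimages of $\ell'$ are obtained in exactly two ways:
\begin{itemize}
\item $u$ is placed into an existing level $j$ with $k<j\leq N$. There are $N-k$ such choices, each of height $N$.
\item $u$ forms a new singleton level inserted at one of the positions $k+1,\dots,N+1$, with all higher levels shifted up. There are $N-k+1$ such choices, each of height $N+1$.
\end{itemize}
The fibre over $\ell'$ therefore contributes
\[
(N-k)(-1)^N+(N-k+1)(-1)^{N+1}=-(-1)^{N}=-(-1)^{\ell'(T')}.
\]
This is the same insertion count that appears in the proof of Lemma~\ref{lemma:push-forward} for the leg case $T_{\sigma_{t_j}}$.

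Summing over all $\ell'\in\mathcal{L}(T')$ and applying the induction hypothesis gives
\[
\sum_{\ell\in\mathcal{L}(T)}(-1)^{\ell(T)}=-\sum_{\ell'\in\mathcal{L}(T')}(-1)^{\ell'(T')}=-(-1)^{|E(T')|}=(-1)^{|E(T)|},
\]
which completes the induction.

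The main point needing care is checking that this two-case description of the fibre is exhaustive and has no overlaps. Exhaustiveness holds because $u$ has no children, so its level is constrained only by $\ell(u)>\ell(p)$. Disjointness holds because whether $u$ shares its level with another vertex is determined by $\ell$, so each $\ell$ falls into exactly one case. Finally, the deletion and insertion maps are mutually inverse on fibres, since the levels of all other vertices relative to one another are preserved.
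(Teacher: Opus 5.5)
Your proposal is correct and follows essentially the same argument as the paper: remove (equivalently, attach) a leaf, split the level functions over a fixed level function of the smaller tree into the $N-k$ that put the leaf in an existing level and the $N-k+1$ that give it a new singleton level, so each fibre contributes $-(-1)^{N}$ and the sign flips with each added edge. The only cosmetic difference is that you start the induction from the one-vertex tree rather than from a single-branch tree.
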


\begin{proof}
For $T$ a tree with a single branch, there is only one $\ell \in \mathcal{L}(T)$, with $\ell(T) = |E(T)|$, therefore the formula holds. For an arbitrary tree $T$, denote by $T'$ the tree obtained by attaching a vertex $v'$ to a vertex $v$ of $T$. 
For $\ell'\in\mathcal{L}(T')$, either $\ell'(v')$ is the same as the level of some vertex of $T$, or $v'$ is the only vertex with level $\ell'(v')$. In the first case, 
$\ell'(T') = \ell(T)$ where $\ell\in\mathcal{L}(T)$ is obtained by restriction of $\ell'$ to $T$; there are exactly $\ell(T)-\ell(v)$ ways of extending $\ell$ to $\ell'$, corresponding to the choices $\ell'(v') = \ell(v)+1, \dots, \ell(T)$. In the second case, the restriction of $\ell'$ to $T$ does not produce a level function, since we need to remove the empty level by subtracting one from the level of each vertex having level larger than the level of $v'$. Denote by $\ell$ the resulting level function on $T$; in this case $\ell(T')=\ell(T)+1$ and there are exactly $\ell(T)-\ell(v)+1$
ways of extending $\ell$ to $\ell'$, corresponding to the possible positions of the new level. It follows that 
\begin{align}
    \sum_{\ell\in\mathcal{L}(T')}(-1)^{\ell(T')} = 
    \sum_{\ell\in\mathcal{L}(T)} (\ell(T)-\ell(v)) (-1)^{\ell(T)} +
    \sum_{\ell\in\mathcal{L}(T)} (\ell(T)-\ell(v)+1) (-1)^{\ell(T)+1}
    = - \sum_{\ell\in\mathcal{L}(T)}(-1)^{\ell(T)}.
\end{align}
Hence attaching a new vertex changes the sign of $\sum_{\ell\in\mathcal{L}(T)}(-1)^{\ell(T)}$. By induction over the vertices (or edges) we conclude. 
\end{proof}

To proceed, we need the following purely combinatorial statement:
\begin{lemma} \label{L-5-8}
For any $n\geq 2$ we have:
\begin{align} \label{eq:comb-identity}
    -\sum_{T\in \SRT_{0,n,1}} (-1)^{|E(T)|} \prod_{v\in V(T)} a(v)^{|H_+(v)|-1} = 
    \Big(-\sum_{i=1}^n a_i\Big)^{n-1}.
\end{align}
\end{lemma}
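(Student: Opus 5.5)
We argue by strong induction on the number of regular legs, after generalizing the statement to an arbitrary finite set $S$ of leg labels. For $S\subseteq\{1,\dots,n\}$ put $A_S\coloneqq\sum_{i\in S}a_i$, and let $F(S)$ be the sum on the left-hand side of \eqref{eq:comb-identity} without the leading minus sign, taken over trees in $\SRT_{0,|S|,1}$ whose regular legs are labelled by $S$. The claim to prove is
\begin{align*}
F(S)=-\big(-A_S\big)^{|S|-1}=(-1)^{|S|}A_S^{|S|-1}\qquad\text{for } |S|\geq 2 .
\end{align*}
The case $|S|=2$ is a single trivalent root vertex, which contributes $A_S$, so it holds.

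\textbf{Recursion at the root.} Genus $0$ stability forces $|H_+(v)|\geq 2$ at every vertex. The root therefore has $k\geq 2$ positive half-edges, and they induce a set partition $\pi$ of $S$ into $k$ blocks. A block $B$ is either a single leg, or the leg set of the subtree hanging from an edge at the root; such a subtree is itself an element of $\SRT_{0,|B|,1}$ whose frozen leg is the edge. The root contributes $A_S^{k-1}$, and each edge contributes an extra sign $-1$. Hence
\begin{align*}
F(S)=\sum_{\pi,\ |\pi|\geq 2} A_S^{|\pi|-1}\prod_{B\in\pi}G(B),
\end{align*}
where $G(\{i\})=1$ and $G(B)=-F(B)$ for $|B|\geq 2$. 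By the induction hypothesis, $G(B)=(-1)^{|B|-1}A_B^{|B|-1}$ for every block, and this formula also covers singletons. The $k=1$ term of the same sum equals $G(S)$, which is exactly $-F(S)$ under the claimed formula. The claim is therefore equivalent to the vanishing, for $|S|=n\geq 2$, of
\begin{align*}
\sum_{\pi}(-1)^{n-|\pi|}A_S^{|\pi|-1}\prod_{B\in\pi}A_B^{|B|-1}.
\end{align*}

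\textbf{Abel-type identity.} I will interpret $A_B^{|B|-1}$ through the weighted Cayley formula: the sum over rooted trees on vertex set $B$ of $\prod_v a_v^{\#\mathrm{children}(v)}$ equals $A_B^{|B|-1}$. A partition with $k$ blocks, together with a rooted tree on each block, is the same as a rooted forest on $S$ with $k$ components. The weighted forest formula then gives
\begin{align*}
\sum_{|\pi|=k}\prod_{B\in\pi}A_B^{|B|-1}=\binom{n-1}{k-1}A_S^{n-k}.
\end{align*}
This is the weighted version of the count $\binom{n-1}{k-1}n^{n-k}$ of rooted forests with $k$ trees. It can be proved by attaching all roots to an extra vertex $0$ with weight $a_0$, applying Cayley's formula on $S\cup\{0\}$ rooted at $0$, and extracting the coefficient of $a_0^{k}$.

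Substituting into the sum above gives
\begin{align*}
A_S^{n-1}\sum_{k}(-1)^{n-k}\binom{n-1}{k-1}=\pm A_S^{n-1}(1-1)^{n-1}=0\qquad\text{for } n\geq 2,
\end{align*}
which closes the induction.

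\textbf{Main obstacle.} I expect the main care to go into stating and justifying the weighted forest identity. The coefficient extraction from Cayley's formula on $S\cup\{0\}$ should be written out explicitly, or replaced by a generating-function argument via Lagrange inversion for $T(x)=x\,e^{T}$ in the weighted multivariate form. The sign bookkeeping in the root recursion also needs attention: the edge sign is $-1$, the frozen leg of a subtree becomes an edge of the big tree, and each vertex contributes the exponent $|H_+(v)|-1$.
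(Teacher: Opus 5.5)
Your proof is correct. The reduction step is the same as in the paper: splitting at the root and applying the induction hypothesis to the branches turns the claim into the vanishing of $\sum_{\pi}(-1)^{n-|\pi|}A_S^{|\pi|-1}\prod_{B\in\pi}A_B^{|B|-1}$. This is exactly \eqref{eq:comb-identity2}, written with unordered instead of ordered set partitions.

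You genuinely differ from the paper in how you prove this identity.

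\textbf{The paper's route.} It notes that the expression is a homogeneous polynomial of degree $n-1$. It checks by a direct regrouping that the expression vanishes at $a_n=0$: blocks containing $n$ together with other labels produce a total factor $-\sum_{i<n}a_i$, and the singleton block $\{n\}$ produces $+\sum_{i<n}a_i$. Symmetry extends this to every hyperplane $a_i=0$. A polynomial vanishing on all coordinate hyperplanes is divisible by $a_1\cdots a_n$, which has degree $n$, so the expression is zero.

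\textbf{Your route.} You compute each slice with a fixed number $k$ of blocks in closed form, $\sum_{|\pi|=k}\prod_{B}A_B^{|B|-1}=\binom{n-1}{k-1}A_S^{n-k}$, using the weighted rooted-forest form of Cayley's formula. Everything then collapses to $\pm A_S^{n-1}(1-1)^{n-1}=0$.

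\textbf{Comparison.} The paper's argument is shorter and entirely self-contained. Yours imports a classical enumerative result but is more informative. It shows that each graded piece is already a multiple of $A_S^{n-1}$, and that the cancellation is nothing more than the alternating binomial sum.

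\textbf{One step to make explicit.} You stated the weighted Cayley formula summed over all roots, but for the auxiliary vertex $0$ you need the fixed-root version. It follows in one line. For a tree on $N$ vertices rooted at $r$, one has $\prod_v x_v^{\#\mathrm{children}(v)}=x_r\prod_v x_v^{\deg_T v-1}$. Hence the sum over trees rooted at a fixed $r$ equals $x_r U$, with $U$ independent of $r$. Summing over $r$ and using the all-roots formula gives $U=(\sum_v x_v)^{N-2}$. With $N=n+1$ and $r=0$ this is $a_0(a_0+A_S)^{n-1}$, whose coefficient of $a_0^k$ is the binomial expression you use.
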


\begin{proof} 
We proceed by induction over the number of regular legs $n$. For $n=2$ the statement is obviously true, since only the graph with a single vertex contributes to the sum. Observe that, by distinguishing the root vertex and summing over the branches attached to it, the sum over graphs can be splitted as 
\begin{equation}
\sum_{T\in\SRT_{0,n,1}} = \sum_{k=2}^n \frac1{k!} \sum_{\substack{I_1\sqcup \cdots \sqcup I_k = \{1,\dots,n\} \\ I_i\not=\emptyset,\ i=1,\dots,k}} \prod_{i=1}^k \sum_{T\in\SRT_{0,|I_i|,1}}
\end{equation}
where the $k!$ compensates for the labeling of the branches. 
By inductive hypothesis, assume the statement is true for any number of regular legs strictly smaller than $n$ and apply it to every branch in the sum. The the proof of~\eqref{eq:comb-identity} then reduces to proving
\begin{align} \label{eq:comb-identity1}
    -\sum_{k=2}^n \frac{1}{k!} \sum_{\substack{I_1\sqcup \cdots \sqcup I_k = \{1,\dots,n\} \\ I_i\not=\emptyset,\ i=1,\dots,k}}
    \Big(\sum_{i=1}^n a_i\Big)^{k-1} \prod_{i=1}^k (-a_{I_i})^{|I_i|-1} = 
    \Big(-\sum_{i=1}^n a_i\Big)^{n-1},
\end{align}
where $a_I \coloneqq \sum_{i\in I} a_i$, which can be further rewritten as 
\begin{align} \label{eq:comb-identity2}
    \sum_{k=1}^n \frac{1}{k!} \sum_{\substack{I_1\sqcup \cdots \sqcup I_k = \{1,\dots,n\} \\ I_i\not=\emptyset,\ i=1,\dots,k}}
    \Big(\sum_{i=1}^n a_i\Big)^{k-1} \prod_{i=1}^k (-a_{I_i})^{|I_i|-1} = 0.
\end{align}
We can prove this identity by analyzing its restrictions to the divisors $a_i=0$, $i=1,\dots,n$, cf.~\cite[Proof of Eq.~(4.31)]{dekinga2026resonancetransformations22p1minimal}. One can check directly that~\eqref{eq:comb-identity2} holds at $a_n=0$. Indeed, if $n\geq 2$, then
\begin{align} 
    &
    \left(\sum_{k=1}^n \frac{1}{k!} \sum_{\substack{I_1\sqcup \cdots \sqcup I_k = \{1,\dots,n\} \\ I_i\not=\emptyset,\ i=1,\dots,k}}
    \Big(\sum_{i=1}^n a_i\Big)^{k-1} \prod_{i=1}^k (-a_{I_i})^{|I_i|-1}\right)\bigg|_{a_n=0}
    \\ \notag & = \sum_{k=1}^{n-1} \frac{1}{k!} \sum_{\substack{I_1\sqcup \cdots \sqcup I_k = \{1,\dots,n-1\} \\ I_i\not=\emptyset,\ i=1,\dots,k}}
    \Big(\sum_{i=1}^{n-1} a_i\Big)^{k-1} \prod_{i=1}^k (-a_{I_i})^{|I_i|-1} \sum_{j=1}^k (-a_{I_j})
    \\ \notag & \quad + \sum_{k=1}^{n-1} \frac{1}{k!} \sum_{\substack{I_1\sqcup \cdots \sqcup I_k = \{1,\dots,n-1\} \\ I_i\not=\emptyset,\ i=1,\dots,k}}
    \Big(\sum_{i=1}^{n-1} a_i\Big)^{k} \prod_{i=1}^k (-a_{I_i})^{|I_i|-1}
    \\ \notag & = 0,
\end{align}
where the first summand corresponds to the partitions $I_1\sqcup \cdots \sqcup I_k$ of $\{1,\dots,n\}$ such that $n\in I_j$ and $I_j\setminus \{n\}$ is non-empty, and the second summand corresponds to the partitions where one of $I_j$'s is equal to $\{n\}$.
By symmetry in the arguments $a_1,\dots,a_{n}$ we conclude that~\eqref{eq:comb-identity2} also holds at $a_i=0$ for all $i=1,\dots,n-1$. Since the only polynomial of degree $n-1$ in the variables $a_1,\dots,a_n$ that vanishes at $a_i=0$ for all $i=1,\dots,n$ is constant zero, we conclude that~\eqref{eq:comb-identity2} holds, for any $n\geq 2$. 
\end{proof}

Combining the results of Lemmata~\ref{L-5-6} and~\ref{L-5-8}, we arrive at the following expression:
\begin{lemma} \label{lem:B-to-Psi} For any $h\geq 2$ and $n\leq h$ we have:
\begin{align}
    \int_{\oM_{h,n}} \ch_{2h-1} B^0_{h,n} & = \sum_{k=1}^n \frac{1}{k!} \sum_{\substack{I_1\sqcup \cdots \sqcup I_k = \{1,\dots,n\} \\ I_i\not=\emptyset,\ i=1,\dots,k}} \sum_{\substack{d_1,\dots,d_k\geq 0 \\ d_1+\cdots+d_k = h+k-2}} \int_{\oM_{h,k}} \ch_{2h-1} \prod_{i=1}^k \psi_i^{d_i} \prod_{i=1}^k (-1)^{|I_i|-1} a_{I_i}^{|I_i|-1+d_i}
    \label{eq: interesting integrals}
\end{align}
where $a_I\coloneqq \sum_{i\in I} a_i$ for any $I\subseteq \{1,\dots,n\}$, and $I_1\sqcup \cdots \sqcup I_k = \{1,\dots,n\}$ denotes an ordered set partition of $\{1,\dots,n\}$.
\end{lemma}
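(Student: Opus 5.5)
The plan is to start from Lemma~\ref{L-5-6}, which expresses $\int_{\oM_{h,n}} \ch_{2h-1} B^0_{h,n}$ as a sum over $T\in\SRT_{h,n,0}$ of $(-1)^{|E(T)|}\int_{\oM_{h,|H_+(v_r)|}}\ch_{2h-1}\Psi(v_r)\prod_{v\in V_{nr}(T)}a(v)^{|H_+(v)|-1}$. I will then reorganize this sum by the structure at the root vertex. Each tree $T\in\SRT_{h,n,0}$ is determined by three pieces of data: the genus-$h$ root $v_r$, the set of its positive half-edges, and, for each positive half-edge $h_i$ at the root, the subtree hanging from it. A subtree is either a single regular leg or a genus-$0$ rooted tree. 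Grouping these branches gives an unordered set partition of $\{1,\dots,n\}$ into blocks $I_1,\dots,I_k$ with $k=|H_+(v_r)|$. Passing to ordered set partitions introduces the factor $\frac{1}{k!}$, which compensates for the labeling of the root's marked points, exactly as in the proof of Lemma~\ref{L-5-8}. Since $\Psi(v_r)$ is symmetric under relabeling its points together with their weights $a(h_i)=a_{I_i}$, this step is harmless.

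Next, for a block $I_i$ I identify the branch sum with $\SRT_{0,|I_i|,1}$. A branch attached to the root via a half-edge $h_i$ with $|I_i|\ge 2$ is a genus-$0$ rooted tree whose own root $v$ is joined to $v_r$ by an edge. Cutting that edge and regarding it as the frozen leg, the branch becomes an element of $\SRT_{0,|I_i|,1}$. The edge count changes by exactly one, since the connecting edge contributes an extra $-1$, and the weight $\prod_{v}a(v)^{|H_+(v)|-1}$ over the branch vertices matches the left side of \eqref{eq:comb-identity} term by term. Lemma~\ref{L-5-8} therefore gives branch sum $=(-1)\cdot\bigl(-\sum(\ldots)\bigr)=(-a_{I_i})^{|I_i|-1}$. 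If $|I_i|=1$, the branch is a bare leg, contributing $1=(-a_{I_i})^0$, consistent with the formula.

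Finally, I expand the root factor. We have $\Psi(v_r)=\prod_{i=1}^k(1-a_{I_i}\psi_i)^{-1}$, and by dimension counting only the part of total $\psi$-degree $\dim\oM_{h,k}-(2h-1)=h+k-2$ survives against $\ch_{2h-1}$. This yields $\sum_{d_1+\cdots+d_k=h+k-2}\int\ch_{2h-1}\prod\psi_i^{d_i}\prod a_{I_i}^{d_i}$. Multiplying by the branch factors $\prod(-1)^{|I_i|-1}a_{I_i}^{|I_i|-1}$ gives exactly \eqref{eq: interesting integrals}.

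The main obstacle is the sign and automorphism bookkeeping in the second step. I must check carefully that the $k!$ normalization for ordered partitions matches the identification of trees with labeled legs, so that no residual symmetry factors arise. I must also confirm that the extra edge to the root accounts precisely for the leading minus sign in \eqref{eq:comb-identity}.
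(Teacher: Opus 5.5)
Your proposal is correct and follows the paper's own proof: you split each tree at the genus-$h$ root into branches in $\SRT_{0,|I_i|,1}$, pass to ordered set partitions with the factor $\frac{1}{k!}$, absorb the sign of each connecting edge into the leading minus of \eqref{eq:comb-identity}, and extract the degree-$(h+k-2)$ part of $\Psi(v_r)=\prod_i(1-a_{I_i}\psi_i)^{-1}$. One cosmetic point: the branch sum is $(-1)\cdot\sum_{T_i}(-1)^{|E(T_i)|}\prod_{v}a(v)^{|H_+(v)|-1}$, which is exactly the left-hand side of \eqref{eq:comb-identity}, so your intermediate expression $(-1)\cdot\bigl(-\sum(\ldots)\bigr)$ has one minus sign too many, although the value $(-a_{I_i})^{|I_i|-1}$ you arrive at is correct.
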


\begin{proof} Consider Eq.~\eqref{eq:reduction-to-simple-trees}. If we remove the root vertex, we obtain a forest of trees $T_1,\dots,T_k$, $k\geq 1$, that implies a partition of the labels of legs $\{1,\dots,n\}$ into $k$ non-empty subsets $I_1,\dots,I_k$. We have $T_i\in\SRT_{0,|I_i|,1}$, $i=1,\dots,k$. Thus, we can rewrite Eq.~\eqref{eq:reduction-to-simple-trees} as 
    \begin{align} \label{eq:reduction-to-simple-trees-2}
       \int_{\oM_{h,n}} \ch_{2h-1} B^0_{h,n} = & \sum_{k=1}^n \frac{1}{k!} \sum_{\substack{I_1\sqcup \cdots \sqcup I_k = \{1,\dots,n\} \\ I_i\not=\emptyset,\ i=1,\dots,k}} \sum_{\substack{d_1,\dots,d_k\geq 0 \\ d_1+\cdots+d_k = h+k-2}} \int_{\oM_{h,k}} \ch_{2h-1} \prod_{i=1}^k \psi_i^{d_i} a_{I_i}^{d_i} \times
    \\ \notag & \quad 
    \prod_{\substack{i=1\\ |I_i|\geq 2}}^k \bigg(-\sum_{T\in \SRT_{0,|I_i|,1}} (-1)^{|E(T)|} \prod_{v\in V(T)} \tilde a(v)^{|H_+(v)|-1}\bigg),
    \end{align}
    where by $\tilde a(v)$ we mean that for a tree in $\SRT_{0,|I_i|,1}$ in the $i$-th factor the parameters associated to legs are taken in $\{a_i\}_{i\in I_i}$. Note that for convenience we used an ordered set partition $(I_1,\dots,I_k)$ of $\{1,\dots,n\}$ and added a prefactor $\tfrac{1}{k!}$. Applying Eq.~\eqref{eq:comb-identity} to the factors in the second line, we obtain the statement of the lemma.
\end{proof}

The intersection numbers $\int_{\overline{\mathcal{M}}_{h,n}}{\rm ch}{}_{2h-1}B^{0}_{h,n}$ satisfy the following dilaton-type property.
\begin{lemma} \label{lem:Quasi-Dilaton} 
For $2\leq n\leq h$ we have
\begin{equation}
{\rm Coeff}_{\left(a_{n}\right)^{1}}\int_{\overline{\mathcal{M}}_{h,n}}{\rm ch}{}_{2h-1}B^{0}_{h,n}=(h-n+1)\int_{\overline{\mathcal{M}}_{h,n-1}}{\rm ch}{}_{2h-1}B^{0}_{h,n-1},
\end{equation}
where ${\rm Coeff}_{\left(a_{n}\right)^{1}}$ denotes the operator extracting the coefficient of $\left(a_{n}\right)^{1}$ while treating the remaining variables $a_{1},\dots,a_{n-1}$ as formal parameters.
\end{lemma}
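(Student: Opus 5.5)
The plan is to work entirely from the explicit formula of Lemma~\ref{lem:B-to-Psi}, which expresses $\int_{\oM_{h,n}}\ch_{2h-1}B^0_{h,n}$ as a sum over ordered set partitions $(I_1,\dots,I_k)$ of $\{1,\dots,n\}$ and exponents $d_1+\cdots+d_k=h+k-2$. In each summand the variable $a_n$ enters only through the factor $a_{I_j}^{|I_j|-1+d_j}$ of the unique block $I_j\ni n$. So the coefficient of $a_n^1$ can be read off block by block. I will then regroup the result as a sum over ordered partitions $(I'_1,\dots,I'_{k'})$ of $\{1,\dots,n-1\}$, and compare it termwise with the same formula for $\int_{\oM_{h,n-1}}\ch_{2h-1}B^0_{h,n-1}$.

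There are three cases for the block containing $n$.

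\emph{(a)} $I_j=\{n\}$ and $d_j=0$. Here $a_n$ appears to the power $0$, so these terms do not contribute.

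\emph{(b)} $I_j=\{n\}$ and $d_j=1$. The factor is exactly $a_n$. The remaining blocks form an ordered $(k-1)$-partition of $\{1,\dots,n-1\}$ with exponents summing to $h+(k-1)-2$. Choosing the position $j$ of the singleton gives a factor $k$, which turns the prefactor $\tfrac1{k!}$ into $\tfrac1{(k-1)!}$. The integral $\int_{\oM_{h,k}}\ch_{2h-1}\psi_j\prod_{i\neq j}\psi_i^{d_i}$ is then reduced to $\oM_{h,k-1}$ by the dilaton equation. This is legitimate because $\ch_{2h-1}$ is pulled back from $\oM_h$. It produces the factor $2h-2+(k-1)$.

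\emph{(c)} $I_j=I'\sqcup\{n\}$ with $I'\neq\emptyset$. Expanding $(-1)^{|I'|}(a_{I'}+a_n)^{|I'|+d_j}$ to first order in $a_n$ gives $-(|I'|+d_j)$ times the factor $(-1)^{|I'|-1}a_{I'}^{|I'|-1+d_j}$ that the block $I'$ contributes in the $(n-1)$-point formula. The integral and the exponents are unchanged.

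Fix an ordered partition $(I'_1,\dots,I'_{k'})$ of $\{1,\dots,n-1\}$ and a choice of exponents. It receives contributions from all $k'$ ways of inserting $n$ into an existing block, via case (c), and from inserting $\{n\}$ as a new block, via case (b). The total multiplier is therefore
\[
(2h-3+k'+1)-\sum_{i}\big(|I'_i|+d_i\big)=(2h-2+k')-(n-1)-(h+k'-2),
\]
where $k=k'+1$ in case (b). The crucial point is that the dependence on $k'$ cancels, and the multiplier is a constant depending only on $h$ and $n$. This cancellation is what makes a dilaton-type statement possible at all. Summing over all partitions then gives the claimed identity with that constant.

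The main obstacle is bookkeeping. I must track precisely the ordered-partition prefactors $\tfrac1{k!}$ versus $\tfrac1{(k-1)!}$, the number of marked points entering the dilaton factor, and the exponent shifts in case (c), so that the constant comes out exactly as $h-n+1$. A naive count is sensitive to off-by-one errors in exactly these places. I would double-check the constant on the smallest case $n=2$, computed directly from Lemma~\ref{lem:B-to-Psi}, with $\int_{\oM_{h,1}}\ch_{2h-1}\psi_1^{h-1}$ on the right-hand side. I would also check that the hypothesis $n\le h$ keeps every term used within the range where Lemma~\ref{lem:B-to-Psi} applies.
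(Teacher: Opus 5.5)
Your proposal is correct and is essentially the paper's own proof: extract the coefficient of $a_n^1$ from the formula of Lemma~\ref{lem:B-to-Psi}, and regroup over ordered partitions of $\{1,\dots,n-1\}$. When $n$ sits in a larger block this gives the factor $-(|I'_j|+d_j)$, summing to $-(n-1)-(h+k'-2)$; when $n$ is a singleton with $d_j=1$ it gives the dilaton factor $2h-2+k'$, with $k/k!=1/k'!$, and your displayed multiplier indeed simplifies to $h-n+1$. The only point you left implicit is that singletons $\{n\}$ with $d_j\geq 2$ also drop out, which is trivial.
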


\begin{proof}
First notice that, since $n\leq h$, the integrals on both sides of the equality can be represented using Lemma~\ref{lem:B-to-Psi}. We start from the left hand side and extract the coefficient of $\left(a_{n}\right)^{1}$:
\begin{align}
\frac{\partial}{\partial a_{n}}\int_{\overline{\mathcal{M}}_{h,n}}{\rm ch}{}_{2h-1}B^{0}_{h,n}\Big|_{a_{n}=0}
\end{align}
using (\ref{eq: interesting integrals}). The derivative $\frac{\partial}{\partial a_{n}}a^{\left|I_{j}\right|-1+d_{j}}_{I_{j}}\big|_{a_{n}=0}$ is nonzero only if $n\in I_{j},$  we get
\begin{align}
 \frac{\partial}{\partial a_{n}}\int_{\overline{\mathcal{M}}_{g,n}}B^{0}_{g,n}{\rm ch}{}_{2g-1}\Big|_{a_{n}=0}  =\sum^{n}_{k=1} \frac{1}{k!}\sum^{k}_{j=1}\sum_{\substack{I_{1}\sqcup\cdots\sqcup I_{k}=\{1,\dots,n\}\\
I_{i}\not=\emptyset,\ i=1,\dots,k\\
n\in I_{j}
}
}\sum_{\substack{d_{1},\dots,d_{k}\geq0\\
d_{1}+\cdots+d_{k}=h+k-2\\
|I_{j}|+d_{j}\geq2
}
}-\left(\left|I_{j}\right|-1+d_{j}\right)\label{eq: derivative integral} \\
 \qquad \times \int_{\overline{\mathcal{M}}_{h,k}}{\rm ch}_{2h-1}\prod^{k}_{i=1}\psi^{d_{i}}_{i}(-1)^{|I_{j}|-2} a^{\left|I_{j}\right|-2+d_{j}}_{I_{j}}\big|_{a_{n}=0}\prod^{k}_{\substack{i=1\\
i\neq j
}
}(-1)^{|I_{i}|-1}a^{|I_{i}|-1+d_{i}}_{I_{i}}.\nonumber 
\end{align}
There are two kind of terms. 
\begin{itemize}
\item If $\left|I_{j}\right|>1$, we denote $\tilde{I}_{j}\coloneqq I_{j}\backslash\left\{ n\right\} $ and make the change of summation given by the bijection of ordered set partition of $\left\{ 1,\dots,n\right\} $ into $k$ blocks and ordered set partition of $\left\{ 1,\dots,n-1\right\} $ into $k$ blocks: 
\begin{align}
\left(I_{1},\dots,I_{k}\right)\rightarrow\left(I_{1},\dots,\tilde{I}_{j},\dots,I_{k}\right).
\end{align}
Since $a_{I_{j}}|_{a_{n}=0}=a_{\tilde{I}_{j}}$, $\left|I_{j}\right|-2+d_{j}=|\tilde{I}_{j}|-1+d_{j}$ and $(-1)^{|I_{j}|-2}=(-1)^{|\tilde{I}_{j}|-1}$, the factor of $-\left(\left|\tilde{I}_{j}\right|+d_{j}\right)$ in (\ref{eq: derivative integral}) is independent of $j$, so the sum over $j$ yields 
\begin{align}
\sum^{k}_{j=1}-\bigl(|\tilde{I}_{j}|+d_{j}\bigr)=-\bigl(n-3+h+k\bigr).
\end{align}
\item If $\left|I_{j}\right|=1$, then the only non-vanishing contribution is for $d_{j}=1$, and thus $\left|I_{j}\right|-1+d_{j}=1$. In that situation, we make the change of summation given by the bijection of ordered set partition of $\left\{ 1,\dots,n\right\} $ into $k$ blocks and ordered set partition $\left\{ 1,\dots,n-1\right\} $ into $\left(k-1\right)$ blocks: 
\begin{align}
\left(I_{1},\dots,I_{k}\right)\rightarrow\left(I_{1},\dots,\widehat{I_{j}},\dots,I_{k}\right),
\end{align}
where $\widehat{I_{j}}$ indicates that $I_{j}$ is removed from the tuple. In addition, we apply the dilaton equation on the root 
\begin{align}
\int_{\overline{\mathcal{M}}_{h,k}}{\rm ch}_{2h-1}\psi^{d_{1}}_{1}\cdots\psi_{j}\cdots\psi^{d_{k}}_{k}=\left(2h-3+k\right)\int_{\overline{\mathcal{M}}_{h,k-1}}{\rm ch}_{2h-1}\psi^{d_{1}}_{1}\cdots\widehat{\psi_{j}}\cdots\psi^{d_{k}}_{k}.
\end{align}
The resulting summand in (\ref{eq: derivative integral}) is again independent of $j$, summing over $j=1,\dots,k$ therefore produces a multiplicity $k$, which turns the prefactor $\frac{1}{k!}$ into $\frac{1}{(k-1)!}$. Setting $k'\coloneqq k-1$, the constraint becomes $d_{1}+\cdots+d_{k'}=h+k'-2$ and the dilaton factor reads $2h-2+k'$.
\end{itemize}
Gathering both types of terms in (\ref{eq: derivative integral}), noticing that terms of the first kind force $k\leq n-1$ and terms of the second kind force $k\geq2$, we get 

\begin{align}
& \frac{\partial}{\partial a_{n}}\int_{\overline{\mathcal{M}}_{g,n}}B^{0}_{g,n}{\rm ch}{}_{2g-1}\Big|_{a_{n}=0}=\sum^{n-1}_{k=1} \frac{1}{k!}\left[-\left(n-3+h+k\right)+\left(2h-2+k\right)\right] \\
& \qquad \qquad  \times  \sum_{\substack{I_{1}\sqcup\cdots\sqcup I_{k}=\{1,\dots,n-1\}\\
I_{i}\not=\emptyset,\ i=1,\dots,k
}
} \sum_{\substack{d_{1},\dots,d_{k}\geq0\\
d_{1}+\cdots+d_{k}=h+k-2
}
}  \int_{\overline{\mathcal{M}}_{h,k}} {\rm ch}_{2h-1}\prod^{k}_{i=1}\psi^{d_{i}}_{i}\prod^{k}_{\substack{i=1}
}(-1)^{|I_{i}|-1}a^{|I_{i}|-1+d_{i}}_{I_{i}},\nonumber 
\end{align}
which is exactly $(h-n+1)\int_{\overline{\mathcal{M}}_{h,n-1}}{\rm ch}{}_{2h-1}B^{0}_{h,n-1}$ by Lemma~\ref{lem:B-to-Psi}.
\end{proof}

\subsubsection{Proof of Lemma~\ref{lemma:non-vanishing}} \label{subsubsec:ProofLemmaNonvanishing}

Our main tools are Lemmata~\ref{lem:Quasi-Dilaton},~\ref{lem:B-to-Psi}, and the following formula, called Faber's socle intersection number formula: 

\begin{lemma}\label{lem:Faber} Let $g\geq 2$, $k\geq 1$, and $d_1,\dots,d_k\geq 0$ such that $d_1+\cdots+d_k=g-2+k$. 
    If at most one of the $d_{1},\dots,d_{k}$ is vanishing, the intersection number $\int_{\overline{\mathcal{M}}_{g,k}}{\rm ch}_{2g-1}\psi^{d_{1}}_{1}\cdots\psi^{d_{k}}_{k}$ can be evaluated as
\begin{align}
\int_{\overline{\mathcal{M}}_{g,k}}{\ch}_{2g-1}\psi^{d_{1}}_{1}\cdots\psi^{d_{k}}_{k}=\frac{\left(-1\right)^{g-1}\mathrm{B}_{2g}\left(2g-3+k\right)!}{2^{2g-1}\cdot\left(2g\right)!}\prod^{k}_{i=1}\frac{1}{\left(2d_{i}-1\right)!!},
\end{align}
where $\mathrm{B}_{2g}$ are the Bernoulli numbers.
\end{lemma}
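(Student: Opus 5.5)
The plan is to reduce the formula to the known $\lambda_g\lambda_{g-1}$ intersection numbers of Getzler and Pandharipande, which were proved by Faber's algorithm and by Givental's results on the degree-zero Virasoro constraints for $\mathbb{P}^2$. Concretely, I would proceed in three steps: convert $\ch_{2g-1}$ into a multiple of $\lambda_g\lambda_{g-1}$, reduce to the case where all $d_i\geq1$ by the string equation, and then quote
\begin{align*}
\int_{\oM_{g,k}}\lambda_g\lambda_{g-1}\,\psi_1^{d_1}\cdots\psi_k^{d_k}=\frac{(2g-3+k)!\,|\mathrm{B}_{2g}|}{2^{2g-1}(2g)!}\prod_{i=1}^k\frac{1}{(2d_i-1)!!},\qquad d_i\geq 1 .
\end{align*}
Since $|\mathrm{B}_{2g}|=(-1)^{g-1}\mathrm{B}_{2g}$, this already has the shape of the claimed formula. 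What remains is to fix the proportionality constant between $\ch_{2g-1}$ and $\lambda_g\lambda_{g-1}$, and to handle the single allowed zero among the $d_i$.

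\textbf{Step 1 (from $\ch_{2g-1}$ to $\lambda_g\lambda_{g-1}$).} On $\oM_g$, Newton's identities express $\ch_{2g-1}$ as a polynomial in $\lambda_1,\dots,\lambda_{2g-1}$. The terms involving $\lambda_i$ with $i>g$ vanish. Mumford's relation $\Lambda(a)\Lambda(-a)=1$ then lets me rewrite every remaining monomial of degree $2g-1$ in terms of $\lambda_{g-1}\lambda_g$, because in top $\lambda$-degree the relations force all such monomials to be proportional to $\lambda_{g-1}\lambda_g$. I would do this computation by taking the logarithmic derivative of $\Lambda(a)$. The resulting identity has the form $\ch_{2g-1}=c_g\,\lambda_{g-1}\lambda_g$ with an explicit $c_g$ involving $(2g-2)!$ and a sign $(-1)^{g-1}$. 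Pulling back along the forgetful map to $\oM_{g,k}$ preserves the identity. For the boundary I use that both sides restrict to zero on the boundary strata that carry a positive-genus vertex other than the main one, which was already invoked in the proof of Lemma~\ref{L-5-6}. As an independent check on $c_g$, I would compare with the case $k=1$, $d_1=g-1$, where $\int_{\oM_{g,1}}\lambda_g\lambda_{g-1}\psi_1^{g-1}$ is given by Faber's formula.

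\textbf{Step 2 (one vanishing $d_i$).} If some $d_j=0$, I apply the string equation. The class $\ch_{2g-1}$ is pulled back from $\oM_g$, so the string equation gives
\begin{align*}
\int_{\oM_{g,k}}\ch_{2g-1}\prod_{i}\psi_i^{d_i}=\sum_{i\neq j}\int_{\oM_{g,k-1}}\ch_{2g-1}\,\psi_i^{d_i-1}\prod_{l\neq i,j}\psi_l^{d_l}.
\end{align*}
Every exponent on the right is at least $1$, except possibly a new zero, and a new zero can only arise where the right-hand side is still covered by the formula. Inserting the all-positive formula turns the factor $(2d_i-1)!!$ into $(2d_i-3)!!$, so the sum becomes
\begin{align*}
\frac{(2g-4+k)!}{\prod(2d_l-1)!!}\sum_{i\neq j}(2d_i-1).
\end{align*}
Using $\sum_i d_i=g-2+k$ and $(2\cdot0-1)!!=1$, the sum equals $2(g-2+k)-(k-1)=2g-3+k$. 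This reproduces $(2g-3+k)!$, so the formula extends to the case where exactly one $d_i$ vanishes.

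\textbf{Main obstacle.} The delicate point is the bookkeeping of the constant in Step 1. The claimed formula carries no extra factorial, so the normalization of $\ch_{2g-1}$ against $\lambda_{g-1}\lambda_g$, together with the sign, must be exactly right. I would pin it down twice: once via the relation $\lambda_{g-1}\lambda_g=(-1)^{g-1}(2g-1)!\,\ch_{2g-1}$ up to the correct combinatorial factor obtained from the logarithmic derivative of $\Lambda(a)\Lambda(-a)=1$, and once by the $k=1$ check. If these disagree, I would instead integrate Mumford's Grothendieck--Riemann--Roch expression for $\ch_{2g-1}$ directly against $\psi$-monomials. In that formula the $\delta_0$-type correction terms contribute lower-genus integrals, and I would have to control them.
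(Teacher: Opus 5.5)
Your route is the natural unpacking of the citation the paper gives in place of a proof: rewrite $\ch_{2g-1}$ as a multiple of $\lambda_{g-1}\lambda_g$, quote the Getzler--Pandharipande/Givental $\lambda_g\lambda_{g-1}$ formula for $d_i\geq 1$, and extend to one vanishing $d_i$ by the string equation. Your Step~2 is correct; it needs an induction on $k$, since a new zero may appear on the right. The gap is in Step~1, and it cannot be closed, because the constant you need is not $1$. Your own logarithmic-derivative computation settles it. Mumford's relation gives $\frac{d}{dt}\log\Lambda(t)=\Lambda'(t)\Lambda(-t)$. Comparing coefficients of $t^{2g-2}$, where only $(i,j)=(g-1,g),(g,g-1)$ contribute, yields
\begin{align*}
(2g-1)!\,\ch_{2g-1}=p_{2g-1}=(-1)^{g}(g-1)\lambda_{g-1}\lambda_g+(-1)^{g-1}g\,\lambda_{g}\lambda_{g-1}=(-1)^{g-1}\lambda_{g-1}\lambda_g .
\end{align*}
So $c_g=(-1)^{g-1}/(2g-1)!$, and your argument actually produces
\begin{align*}
\int_{\oM_{g,k}}\ch_{2g-1}\psi_1^{d_1}\cdots\psi_k^{d_k}=\frac{\mathrm{B}_{2g}\,(2g-3+k)!}{2^{2g-1}\,(2g)!\,(2g-1)!}\prod_{i=1}^k\frac{1}{(2d_i-1)!!}.
\end{align*}
This is the displayed right-hand side times $(-1)^{g-1}/(2g-1)!\neq 1$ for $g\geq 2$. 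Your ``independent check'' at $k=1$ is circular: it reads off $c_g$ from the formula being proved and would force $c_g=1$. The GRR fallback cannot reconcile the two either, because the discrepancy is genuine. For $g=2$, $k=1$, $d_1=1$ one has $\ch_3=-\frac16\lambda_1\lambda_2$ and $\int_{\oM_{2,1}}\lambda_1\lambda_2\psi_1=2\int_{\oM_2}\lambda_1\lambda_2=\frac1{2880}$. Hence $\int_{\oM_{2,1}}\ch_3\psi_1=-\frac1{17280}$, and Mumford's GRR expression for $\ch_3$ gives the same value. The printed formula instead gives $+\frac1{2880}$.

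So the statement as printed is Faber's formula for $\lambda_g\lambda_{g-1}$, i.e.\ for $(-1)^{g-1}(2g-1)!\,\ch_{2g-1}$, not for $\ch_{2g-1}$. The paper's proof consists only of citations and does not address this normalization. Your argument, completed honestly, establishes the corrected identity above rather than the printed one. Nothing breaks downstream: the quantities in Lemma~\ref{lemma:non-vanishing} are linear in such integrals, so they only acquire the nonzero factor $(-1)^{h-1}/(2h-1)!$, and only their non-vanishing is used.

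Two smaller corrections to Step~1. First, the claim that Mumford's relations make every degree-$(2g-1)$ monomial proportional to $\lambda_{g-1}\lambda_g$ is false in general. For $g=4$, $\lambda_3\lambda_4$ and $\lambda_1\lambda_2\lambda_4$ are linearly independent modulo those relations; only the identity for $p_{2g-1}$ above is needed. Second, no boundary analysis is required, since Mumford's relation holds on all of $\oM_g$ and the identity is simply pulled back to $\oM_{g,k}$.
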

It was originally conjectured in~\cite{Faber}, first proved in~\cite{GetPan} by reduction to Virasoro constraints subsequently proved in~\cite{Givental}, and by now there are many different proofs, see~\cite{BlotShaSingh} for the most recent one and a short overview of other existing proofs. 

Now, we have:
\begin{align}
    & \int_{\oM_{h,h}} \ch_{2h-1} \Coeff{a_1\cdots a_{h-1} a_h^{h-1}} B^0_{h,h}  \overset{\text{Lem.~\ref{lem:Quasi-Dilaton}}}{=} (h-1)! \int_{\oM_{h,1}} \ch_{2h-1} \Coeff{a_1^{h-1}} B^0_{h,1} 
    \\ \notag & \overset{\text{Lem.~\ref{lem:B-to-Psi}}}{=} (h-1)! \int_{\oM_{h,1}} \ch_{2h-1} \psi_1^{h-1} \overset{\text{Lem.~\ref{lem:Faber}}}{=} (h-1)!\frac{\left(-1\right)^{h-1}\mathrm{B}_{2h}\left(2h-2\right)!}{2^{2h-1}\cdot\left(2h\right)!(2h-3)!!}.
\end{align}
This implies \eqref{eq:Odd-h-nonvanishing} (after further simplification for odd $h$). 

Furthermore,
\begin{align}
    & \int_{\oM_{h,h}} \ch_{2h-1} \Coeff{a_1\cdots a_{h-2} a_{h-1}^2 a_h^{h-2}} B^0_{h,h} \overset{\text{Lem.~\ref{lem:Quasi-Dilaton}}}{=} (h-2)! \int_{\oM_{h,2}} \ch_{2h-1} \Coeff{a_{1}^2 a_2^{h-2}} B^0_{h,2} 
    \\ \notag &
    \overset{\text{Lem.~\ref{lem:B-to-Psi}}}{=}  
    (h-2)! \int_{\oM_{h,2}} \ch_{2h-1} \psi_1^2\psi_2^{h-2} -
    (h-2)!\Coeff{a_{1}^2 a_2^{h-2}} \int_{\oM_{h,1}} \ch_{2h-1} \psi_1^{h-1} (a_1+a_2)^{h}
    \\ \notag 
    & \overset{\text{Lem.~\ref{lem:Faber}}}{=} (h-2)! \frac{\left(-1\right)^{h-1}\mathrm{B}_{2h}}{2^{2h-1}\cdot\left(2h\right)!} \bigg( \frac{(2h-1)!}{3!! (2h-5)!!} - \frac{h!}{2!(h-2)!} \frac{(2h-2)!}{(2h-3)!!}\bigg) 
    \\ \notag & = 
    (h-2)! \frac{\left(-1\right)^{h-1}\mathrm{B}_{2h}}{2^{2h-1}\cdot\left(2h\right)!}\cdot \frac{(2h-2)!(5h^2-13h+6)}{6(2h-3)!!}.
\end{align}
Substituting the results of these computations in~\eqref{eq:Even-h-nonvanishing} for even $h$, we have:
\begin{align}
        & \frac{2-h}{(h-2)!}  (h-1)!\frac{-\mathrm{B}_{2h}\left(2h-2\right)!}{2^{2h-1}\cdot\left(2h\right)!(2h-3)!!}
    + \frac{2}{(h-2)!} (h-2)! \frac{-\mathrm{B}_{2h}}{2^{2h-1}\cdot\left(2h\right)!}\cdot \frac{(2h-2)!(5h^2-13h+6)}{6(2h-3)!!} 
    \\ \notag & = \frac{-\mathrm{B}_{2h}\left(2h-2\right)!}{2^{2h-1}\cdot\left(2h\right)!(2h-3)!!}\cdot  \bigg((2-h)(h-1) + \frac 13 (5h^2-13h+6)\bigg) = \frac{-\mathrm{B}_{2h}\left(2h-2\right)!}{2^{2h-1}\cdot\left(2h\right)!(2h-3)!!}\cdot \frac{2h(h-2)}{3}, 
\end{align}
which implies~\eqref{eq:Even-h-nonvanishing}.

\subsection{Proof of Theorem~\ref{thm:main}} 

We fix arbitrary parameters $e_1$ and $e_3$ and prove that if $\mathrm{P}$ is constant then all further $e_{2h-1}$ are equal to zero. 
Assume it is not the case, so $\mathrm{P}$ is constant but not all higher $e_{2h-1}$ are equal to zero. Let $h\geq 3$ be the minimal possible number such that $e_{2h-1}\not=0$, besides $e_1$ and $e_3$.

If $h$ is odd, we consider the coefficient of $\epsilon^{2h} w_1^{h-1} \partial_x^{h+2}$ in $\mathrm{P}$. It is a homogeneous polynomial in $e_{2i-1}$, $i \geq 1$, of homogeneous degree $2h-1$ with respect to the natural grading of the ring of symmetric functions. Since $e_{2h-1}$ is the minimal non-vanishing elementary symmetric function besides $e_1$ and $e_3$, the coefficient of $\epsilon^{2h} w_1^{h-1} \partial_x^{h+2}$ is given by 
\begin{align}
    (2h-1)!e_{2h-1}\cdot \frac{1+(-1)^{h-1}}{(h-1)!} \int_{\oM_{h,h}} \ch_{2h-1} \Coeff{a_1\cdots a_{h-1} a_h^{h-1}} B^0_{h,h} + Q(e_1,e_3),
\end{align}
where $Q(e_1,e_3)$ is an unknown polynomial. By Prop.~\ref{prop:FVH}, $\mathrm{P}|_{e_{2k-1}=0,\, k\geq 3}$ is constant in $w_i$, $i\geq 0$. Hence, $Q(e_1,e_3)=0$.
Hence, if $\mathrm{P}$ is constant in $w$, the coefficient of $\epsilon^{2h} w_1^{h-1} \partial_x^{h+2}$, which is reduced to
\begin{align}
    (2h-1)!e_{2h-1}\cdot \frac{1+(-1)^{h-1}}{(h-1)!} \int_{\oM_{h,h}} \ch_{2h-1} \Coeff{a_1\cdots a_{h-1} a_h^{h-1}} B^0_{h,h},
\end{align}
must be equal to zero. Using the non-vanishing property established in Lemma~\ref{lemma:non-vanishing}, we conclude that the constancy of $\mathrm{P}$ implies that $e_{2h-1}$ vanishes. 

If $h$ is even, we repeat the argument above \emph{mutatis mutandis}, considering the coefficient of $\epsilon^{2h} w_1^{h-2}w_2 \partial_x^{h+1}$ in $\mathrm{P}$.

\printbibliography

@misc{BLS-DRDZ,
      title={On the strong DR/DZ equivalence conjecture}, 
      author={Xavier Blot and Danilo Lewa{\'n}ski and Sergey Shadrin},
      year={2024},
      eprint={2405.12334},
      archivePrefix={arXiv},
      primaryClass={math.AG},
      url={https://arxiv.org/abs/2405.12334}, 
}

@misc{blot2026descendantsintegrableobservablescohomological,
      title={Beyond descendants: integrable observables for cohomological field theories}, 
      author={Xavier Blot and Danilo Lewański and Sergey Shadrin},
      year={2026},
      eprint={2605.22236},
      archivePrefix={arXiv},
      primaryClass={math.AG},
      url={https://arxiv.org/abs/2605.22236}, 
}

@misc{blot2024rootedtreeslevelstructures,
      title={Rooted trees with level structures, $\Omega$-classes and double ramification cycles}, 
      author={Xavier Blot and Danilo Lewański and Sergey Shadrin},
      year={2024},
      eprint={2406.06205},
      archivePrefix={arXiv},
      primaryClass={math.AG},
      url={https://arxiv.org/abs/2406.06205}, 
}

@article {BLRS,
    AUTHOR = {Blot, Xavier and Lewa\'{n}ski, Danilo and Rossi, Paolo and
              Shadrin, Sergei},
     TITLE = {Stable tree expressions with omega-classes and double
              ramification cycles},
   JOURNAL = {J. Geom. Phys.},
  FJOURNAL = {Journal of Geometry and Physics},
    VOLUME = {209},
      YEAR = {2025},
     PAGES = {Paper No. 105391, 17},
      ISSN = {0393-0440,1879-1662},
   MRCLASS = {14H10},
  MRNUMBER = {4839088},
MRREVIEWER = {Zhengning\ Hu},
       DOI = {10.1016/j.geomphys.2024.105391},
       URL = {https://doi.org/10.1016/j.geomphys.2024.105391},
}

@article {BSS25,
    AUTHOR = {Blot, Xavier and Sauvaget, Adrien and Shadrin, Sergey},
	TITLE = {The master relation for polynomiality and equivalences of
	integrable systems},
	JOURNAL = {Bull. Lond. Math. Soc.},
	FJOURNAL = {Bulletin of the London Mathematical Society},
	VOLUME = {57},
	YEAR = {2025},
	NUMBER = {2},
	PAGES = {599--604},
	ISSN = {0024-6093,1469-2120},
	MRCLASS = {14H10 (37K10 53D45)},
	MRNUMBER = {4861898},
	MRREVIEWER = {Alessandro\ Giacchetto},
	DOI = {10.1112/blms.13215},
	URL = {https://doi.org/10.1112/blms.13215},
}

@article {Buryak-DR,
    AUTHOR = {Buryak, A.},
     TITLE = {Double ramification cycles and integrable hierarchies},
   JOURNAL = {Comm. Math. Phys.},
  FJOURNAL = {Communications in Mathematical Physics},
    VOLUME = {336},
      YEAR = {2015},
    NUMBER = {3},
     PAGES = {1085--1107},
      ISSN = {0010-3616,1432-0916},
   MRCLASS = {37K20 (14H10 35Q51 37K25 53D45)},
  MRNUMBER = {3324138},
MRREVIEWER = {Pierre\ A.\ Lochak},
       DOI = {10.1007/s00220-014-2235-2},
       URL = {https://doi.org/10.1007/s00220-014-2235-2},
}

@article{BDGR1,
 author = {Buryak, A. and Dubrovin, B. and Gu{\'e}r{\'e}, J. and Rossi, P.},
 title = {Tau-structure for the double ramification hierarchies},
 fjournal = {Communications in Mathematical Physics},
 journal = {Comm. Math. Phys.},
 issn = {0010-3616},
 volume = {363},
 number = {1},
 pages = {191--260},
 year = {2018},
 language = {English},
 doi = {10.1007/s00220-018-3235-4},
 zbMATH = {6967377},
 Zbl = {1431.53095}
}

@article{BDGR20,
 author = {Buryak, A. and Dubrovin, B. and Gu{\'e}r{\'e}, J. and Rossi, P.},
 title = {Integrable systems of double ramification type},
 fjournal = {IMRN. International Mathematics Research Notices},
 journal = {Int. Math. Res. Not.},
 issn = {1073-7928},
 volume = {2020},
 number = {24},
 pages = {10381--10446},
 year = {2020},
 language = {English},
 doi = {10.1093/imrn/rnz029},
 zbMATH = {7323449},
 Zbl = {1464.37071}
}

@article{BPS1,
 author = {Buryak, A. and Posthuma, H. and Shadrin, S.},
 title = {A polynomial bracket for the {Dubrovin}-{Zhang} hierarchies},
 fjournal = {Journal of Differential Geometry},
 journal = {J. Differ. Geom.},
 issn = {0022-040X},
 volume = {92},
 number = {1},
 pages = {153--185},
 year = {2012},
 language = {English},
 doi = {10.4310/jdg/1352211225},
 zbMATH = {6130615},
 Zbl = {1259.53079}
}

@article {BPS-2,
    AUTHOR = {Buryak, A. and Posthuma, H. and Shadrin, S.},
     TITLE = {On deformations of quasi-{M}iura transformations and the
              {D}ubrovin-{Z}hang bracket},
   JOURNAL = {J. Geom. Phys.},
  FJOURNAL = {Journal of Geometry and Physics},
    VOLUME = {62},
      YEAR = {2012},
    NUMBER = {7},
     PAGES = {1639--1651},
      ISSN = {0393-0440,1879-1662},
   MRCLASS = {37K10 (37K05 37K35)},
  MRNUMBER = {2922026},
MRREVIEWER = {Hao\ Xu},
       DOI = {10.1016/j.geomphys.2012.03.006},
       URL = {https://doi.org/10.1016/j.geomphys.2012.03.006},
}

@article {Buryak-Rossi-recursion,
    AUTHOR = {Buryak, Alexandr and Rossi, Paolo},
     TITLE = {Recursion relations for double ramification hierarchies},
   JOURNAL = {Comm. Math. Phys.},
  FJOURNAL = {Communications in Mathematical Physics},
    VOLUME = {342},
      YEAR = {2016},
    NUMBER = {2},
     PAGES = {533--568},
      ISSN = {0010-3616,1432-0916},
   MRCLASS = {37K10 (37K65 81R12)},
  MRNUMBER = {3459159},
MRREVIEWER = {Oktay\ K.\ Pashaev},
       DOI = {10.1007/s00220-015-2535-1},
       URL = {https://doi.org/10.1007/s00220-015-2535-1},
}

@article{BS22,
 author = {Buryak, A. and Shadrin, S.},
 title = {Tautological relations and integrable systems},
 fjournal = {{\'E}pijournal de G{\'e}om{\'e}trie Alg{\'e}brique. EPIGA},
 journal = {{\'E}pijournal de G{\'e}om. Alg{\'e}br., EPIGA},
 issn = {2491-6765},
 volume = {8},
 pages = {44},
 note = {Id/No 12},
 year = {2024},
 language = {English},
 doi = {10.46298/epiga.2024.10382},
 zbMATH = {7939129},
 Zbl = {1552.14017}
}

@misc{dubrovin2001normalformshierarchiesintegrable,
      title={Normal forms of hierarchies of integrable {PDE}s, {F}robenius manifolds and {G}romov - {W}itten invariants}, 
      author={Dubrovin, B. and Zhang, Y.},
      year={2001},
      eprint={math/0108160},
      archivePrefix={arXiv},
      primaryClass={math.DG},
      url={https://arxiv.org/abs/math/0108160}, 
      note={See also the 2005 updated version.},
}

@article {Teleman,
	AUTHOR = {Teleman, Constantin},
	TITLE = {The structure of 2{D} semi-simple field theories},
	JOURNAL = {Invent. Math.},
	FJOURNAL = {Inventiones Mathematicae},
	VOLUME = {188},
	YEAR = {2012},
	NUMBER = {3},
	PAGES = {525--588},
	ISSN = {0020-9910,1432-1297},
	MRCLASS = {57R56 (18D10 53D45)},
	MRNUMBER = {2917177},
	MRREVIEWER = {Julia\ Bergner},
	DOI = {10.1007/s00222-011-0352-5},
	URL = {https://doi.org/10.1007/s00222-011-0352-5},
}

@article {Hodge-FVH,
	AUTHOR = {Liu, Si-Qi and Yang, Di and Zhang, Youjin and Zhou, Chunhui},
	TITLE = {The {H}odge-{FVH} correspondence},
	JOURNAL = {J. Reine Angew. Math.},
	FJOURNAL = {Journal f\"{u}r die Reine und Angewandte Mathematik. [Crelle's
	Journal]},
	VOLUME = {775},
	YEAR = {2021},
	PAGES = {259--300},
	ISSN = {0075-4102,1435-5345},
	MRCLASS = {14H70 (37K10)},
	MRNUMBER = {4265184},
	MRREVIEWER = {Lihua\ Wu},
	DOI = {10.1515/crelle-2020-0051},
	URL = {https://doi.org/10.1515/crelle-2020-0051},
}

@article {Hodge-GUE,
	AUTHOR = {Dubrovin, Boris and Liu, Si-Qi and Yang, Di and Zhang, Youjin},
	TITLE = {Hodge-{GUE} correspondence and the discrete {K}d{V} equation},
	JOURNAL = {Comm. Math. Phys.},
	FJOURNAL = {Communications in Mathematical Physics},
	VOLUME = {379},
	YEAR = {2020},
	NUMBER = {2},
	PAGES = {461--490},
	ISSN = {0010-3616,1432-0916},
	MRCLASS = {14H10 (14H81 33C60 37K10 81T45)},
	MRNUMBER = {4156215},
	MRREVIEWER = {Xiaoxue\ Xu},
	DOI = {10.1007/s00220-020-03846-6},
	URL = {https://doi.org/10.1007/s00220-020-03846-6},
}

@article {FVH-definition,
	AUTHOR = {Liu, Si-Qi and Zhang, Youjin and Zhou, Chunhui},
	TITLE = {Fractional {V}olterra hierarchy},
	JOURNAL = {Lett. Math. Phys.},
	FJOURNAL = {Letters in Mathematical Physics},
	VOLUME = {108},
	YEAR = {2018},
	NUMBER = {2},
	PAGES = {261--283},
	ISSN = {0377-9017,1573-0530},
	MRCLASS = {37K10 (53D45)},
	MRNUMBER = {3748364},
	MRREVIEWER = {Guoliang\ He},
	DOI = {10.1007/s11005-017-1006-3},
	URL = {https://doi.org/10.1007/s11005-017-1006-3},
}

@article {CubicHodge-Loop,
	AUTHOR = {Liu, Si-Qi and Yang, Di and Zhang, Youjin and Zhou, Chunhui},
	TITLE = {The loop equation for special cubic {H}odge integrals},
	JOURNAL = {J. Differential Geom.},
	FJOURNAL = {Journal of Differential Geometry},
	VOLUME = {121},
	YEAR = {2022},
	NUMBER = {2},
	PAGES = {341--368},
	ISSN = {0022-040X,1945-743X},
	MRCLASS = {14N35 (53D45)},
	MRNUMBER = {4466672},
	MRREVIEWER = {Hsian-Hua\ Tseng},
	DOI = {10.4310/jdg/1659987894},
	URL = {https://doi.org/10.4310/jdg/1659987894},
}

@article {DLYZ-Hodge-conjecture,
	AUTHOR = {Dubrovin, Boris and Liu, Si-Qi and Yang, Di and Zhang, Youjin},
	TITLE = {Hodge integrals and tau-symmetric integrable hierarchies of
	{H}amiltonian evolutionary {PDE}s},
	JOURNAL = {Adv. Math.},
	FJOURNAL = {Advances in Mathematics},
	VOLUME = {293},
	YEAR = {2016},
	PAGES = {382--435},
	ISSN = {0001-8708,1090-2082},
	MRCLASS = {53D45 (37K10)},
	MRNUMBER = {3474326},
	MRREVIEWER = {Zhengyu\ Zong},
	DOI = {10.1016/j.aim.2016.01.018},
	URL = {https://doi.org/10.1016/j.aim.2016.01.018},
}

@article {Buryak-ILW,
	AUTHOR = {Buryak, A.},
	TITLE = {Dubrovin-{Z}hang hierarchy for the {H}odge integrals},
	JOURNAL = {Commun. Number Theory Phys.},
	FJOURNAL = {Communications in Number Theory and Physics},
	VOLUME = {9},
	YEAR = {2015},
	NUMBER = {2},
	PAGES = {239--272},
	ISSN = {1931-4523,1931-4531},
	MRCLASS = {37K05 (14H10)},
	MRNUMBER = {3361294},
	MRREVIEWER = {Ahmed\ Lesfari},
	DOI = {10.4310/CNTP.2015.v9.n2.a1},
	URL = {https://doi.org/10.4310/CNTP.2015.v9.n2.a1},
}

@incollection {Witten-KdV,
	AUTHOR = {Witten, Edward},
	TITLE = {Two-dimensional gravity and intersection theory on moduli
	space},
	BOOKTITLE = {Surveys in differential geometry ({C}ambridge, {MA}, 1990)},
	PAGES = {243--310},
	PUBLISHER = {Lehigh Univ., Bethlehem, PA},
	YEAR = {1991},
	ISBN = {0-8218-0168-6},
	MRCLASS = {32G15 (14C17 14H15 32G81 58F07 81T40)},
	MRNUMBER = {1144529},
	MRREVIEWER = {Steven\ Rosenberg},
    DOI = {10.4310/SDG.1990.v1.n1.a5},
}

@article {Kontsevich-KdV,
	AUTHOR = {Kontsevich, Maxim},
	TITLE = {Intersection theory on the moduli space of curves and the
	matrix {A}iry function},
	JOURNAL = {Comm. Math. Phys.},
	FJOURNAL = {Communications in Mathematical Physics},
	VOLUME = {147},
	YEAR = {1992},
	NUMBER = {1},
	PAGES = {1--23},
	ISSN = {0010-3616,1432-0916},
	MRCLASS = {32G15 (14H15 58F07 81T40)},
	MRNUMBER = {1171758},
	MRREVIEWER = {Claude\ Itzykson},
	URL = {http://projecteuclid.org/euclid.cmp/1104250524},
    DOI = {10.1007/BF02099526},
}

@incollection {Mumford,
    AUTHOR = {Mumford, David},
     TITLE = {Towards an enumerative geometry of the moduli space of curves},
 BOOKTITLE = {Arithmetic and geometry, {V}ol. {II}},
    SERIES = {Progr. Math.},
    VOLUME = {36},
     PAGES = {271--328},
 PUBLISHER = {Birkh\"{a}user Boston, Boston, MA},
      YEAR = {1983},
      ISBN = {3-7643-3133-X},
   MRCLASS = {14H10 (14C15)},
  MRNUMBER = {717614},
MRREVIEWER = {Werner\ Kleinert},
doi = {10.1007/978-1-4757-9286-7_12},
}

@misc{dekinga2026resonancetransformations22p1minimal,
      title={Resonance transformations for the $(2,2p+1)$ minimal string via $x-y$ swap: a proof of Artemev's conjecture}, 
      author={Kornelis Dekinga and Sergey Shadrin and Erik Verlinde},
      year={2026},
      eprint={2606.04854},
      archivePrefix={arXiv},
      primaryClass={hep-th},
      url={https://arxiv.org/abs/2606.04854}, 
}

@article {BuryakGuereRossi,
    AUTHOR = {Buryak, Alexandr and Gu\'{e}r\'{e}, J\'{e}r\'{e}my and Rossi,
              Paolo},
     TITLE = {D{R}/{DZ} equivalence conjecture and tautological relations},
   JOURNAL = {Geom. Topol.},
  FJOURNAL = {Geometry \& Topology},
    VOLUME = {23},
      YEAR = {2019},
    NUMBER = {7},
     PAGES = {3537--3600},
      ISSN = {1465-3060,1364-0380},
   MRCLASS = {14H10 (37K10)},
  MRNUMBER = {4059088},
MRREVIEWER = {\'{A}lvaro\ Ant\'{o}n Sancho},
       DOI = {10.2140/gt.2019.23.3537},
       URL = {https://doi.org/10.2140/gt.2019.23.3537},
}

@article {FaberPanda,
    AUTHOR = {Faber, C. and Pandharipande, R.},
     TITLE = {Hodge integrals and {G}romov-{W}itten theory},
   JOURNAL = {Invent. Math.},
  FJOURNAL = {Inventiones Mathematicae},
    VOLUME = {139},
      YEAR = {2000},
    NUMBER = {1},
     PAGES = {173--199},
      ISSN = {0020-9910,1432-1297},
   MRCLASS = {14N35},
  MRNUMBER = {1728879},
MRREVIEWER = {Jim\ A.\ Bryan},
       DOI = {10.1007/s002229900028},
       URL = {https://doi.org/10.1007/s002229900028},
}

@incollection {Faber,
    AUTHOR = {Faber, Carel},
     TITLE = {A conjectural description of the tautological ring of the
              moduli space of curves},
 BOOKTITLE = {Moduli of curves and abelian varieties},
    SERIES = {Aspects Math., E33},
     PAGES = {109--129},
 PUBLISHER = {Friedr. Vieweg, Braunschweig},
      YEAR = {1999},
      ISBN = {3-528-03125-5},
   MRCLASS = {14H10 (14C15 14C17 14N35)},
  MRNUMBER = {1722541},
MRREVIEWER = {Elham\ Izadi},
doi = {10.1007/978-3-322-90172-9_6},
}

@article {GetPan,
    AUTHOR = {Getzler, E. and Pandharipande, R.},
     TITLE = {Virasoro constraints and the {C}hern classes of the {H}odge
              bundle},
   JOURNAL = {Nuclear Phys. B},
  FJOURNAL = {Nuclear Physics. B. Theoretical, Phenomenological, and
              Experimental High Energy Physics. Quantum Field Theory and
              Statistical Systems},
    VOLUME = {530},
      YEAR = {1998},
    NUMBER = {3},
     PAGES = {701--714},
      ISSN = {0550-3213,1873-1562},
   MRCLASS = {14N35},
  MRNUMBER = {1653492},
MRREVIEWER = {Alexandre\ I.\ Kabanov},
       DOI = {10.1016/S0550-3213(98)00517-3},
       URL = {https://doi.org/10.1016/S0550-3213(98)00517-3},
}

@incollection {Givental,
    AUTHOR = {Givental, Alexander B.},
     TITLE = {Gromov-{W}itten invariants and quantization of quadratic
              {H}amiltonians},
      NOTE = {Dedicated to the memory of I. G. Petrovskii on the occasion of
              his 100th anniversary},
   JOURNAL = {Mosc. Math. J.},
  FJOURNAL = {Moscow Mathematical Journal},
    VOLUME = {1},
      YEAR = {2001},
    NUMBER = {4},
     PAGES = {551--568, 645},
      ISSN = {1609-3321,1609-4514},
   MRCLASS = {53D45 (14N35)},
  MRNUMBER = {1901075},
MRREVIEWER = {Domenico\ Fiorenza},
       DOI = {10.17323/1609-4514-2001-1-4-551-568},
       URL = {https://doi.org/10.17323/1609-4514-2001-1-4-551-568},
}

@article {BlotShaSingh,
    AUTHOR = {Blot, Xavier and Shadrin, Sergey and Singh, Ishan Jaztar},
     TITLE = {Faber's socle intersection numbers via {G}romov-{W}itten
              theory of elliptic curve},
   JOURNAL = {Bull. Lond. Math. Soc.},
  FJOURNAL = {Bulletin of the London Mathematical Society},
    VOLUME = {57},
      YEAR = {2025},
    NUMBER = {9},
     PAGES = {2698--2707},
      ISSN = {0024-6093,1469-2120},
   MRCLASS = {14H10 (14H52 14N35)},
  MRNUMBER = {4956739},
MRREVIEWER = {Tatsunari\ Watanabe},
       DOI = {10.1112/blms.70117},
       URL = {https://doi.org/10.1112/blms.70117},
}

@article {Alexandrov,
    AUTHOR = {Alexandrov, Alexander},
     TITLE = {K{P} integrability of triple {H}odge integrals. {I}. {F}rom
              {G}ivental group to hierarchy symmetries},
   JOURNAL = {Commun. Number Theory Phys.},
  FJOURNAL = {Communications in Number Theory and Physics},
    VOLUME = {15},
      YEAR = {2021},
    NUMBER = {3},
     PAGES = {615--650},
      ISSN = {1931-4523,1931-4531},
   MRCLASS = {37K10 (14N10 14N35 81R10)},
  MRNUMBER = {4290562},
MRREVIEWER = {Hsian-Hua\ Tseng},
       DOI = {10.4310/CNTP.2021.v15.n3.a6},
       URL = {https://doi.org/10.4310/CNTP.2021.v15.n3.a6},
}

@incollection {Takasaki,
    AUTHOR = {Takasaki, Kanehisa},
     TITLE = {Cubic {H}odge integrals and integrable hierarchies of
              {V}olterra type},
 BOOKTITLE = {Integrability, quantization, and geometry. {I}. {I}ntegrable
              systems},
    SERIES = {Proc. Sympos. Pure Math.},
    VOLUME = {103},
     PAGES = {481--502},
 PUBLISHER = {Amer. Math. Soc., Providence, RI},
      YEAR = {[2021] \copyright 2021},
      ISBN = {978-1-4704-5591-0},
   MRCLASS = {37K10 (14N35 53D45)},
  MRNUMBER = {4285689},
MRREVIEWER = {Guoliang\ He},
       DOI = {10.1090/pspum/103.1/01844},
       URL = {https://doi.org/10.1090/pspum/103.1/01844},
}

@article {LiuLiuZhou,
    AUTHOR = {Liu, Chiu-Chu Melissa and Liu, Kefeng and Zhou, Jian},
     TITLE = {A proof of a conjecture of {M}ari\~{n}o-{V}afa on {H}odge
              integrals},
   JOURNAL = {J. Differential Geom.},
  FJOURNAL = {Journal of Differential Geometry},
    VOLUME = {65},
      YEAR = {2003},
    NUMBER = {2},
     PAGES = {289--340},
      ISSN = {0022-040X,1945-743X},
   MRCLASS = {14N35 (14H10)},
  MRNUMBER = {2058264},
MRREVIEWER = {Jim\ A.\ Bryan},
       URL = {http://projecteuclid.org/euclid.jdg/1090511689},
       doi = {10.4310/jdg/1090511689},
}

@article {OP,
    AUTHOR = {Okounkov, A. and Pandharipande, R.},
     TITLE = {Hodge integrals and invariants of the unknot},
   JOURNAL = {Geom. Topol.},
  FJOURNAL = {Geometry and Topology},
    VOLUME = {8},
      YEAR = {2004},
     PAGES = {675--699},
      ISSN = {1465-3060,1364-0380},
   MRCLASS = {14H10 (14N10 57M27)},
  MRNUMBER = {2057777},
       DOI = {10.2140/gt.2004.8.675},
       URL = {https://doi.org/10.2140/gt.2004.8.675},
}

@incollection {MarinoVafa,
    AUTHOR = {Mari\~{n}o, Marcos and Vafa, Cumrun},
     TITLE = {Framed knots at large {$N$}},
 BOOKTITLE = {Orbifolds in mathematics and physics ({M}adison, {WI}, 2001)},
    SERIES = {Contemp. Math.},
    VOLUME = {310},
     PAGES = {185--204},
 PUBLISHER = {Amer. Math. Soc., Providence, RI},
      YEAR = {2002},
      ISBN = {0-8218-2990-4},
   MRCLASS = {57M27 (58J28)},
  MRNUMBER = {1950947},
       DOI = {10.1090/conm/310/05404},
       URL = {https://doi.org/10.1090/conm/310/05404},
}

@article {Kramer,
    AUTHOR = {Kramer, Reinier},
     TITLE = {K{P} hierarchy for {H}urwitz-type cohomological field
              theories},
   JOURNAL = {Commun. Number Theory Phys.},
  FJOURNAL = {Communications in Number Theory and Physics},
    VOLUME = {17},
      YEAR = {2023},
    NUMBER = {2},
     PAGES = {249--291},
      ISSN = {1931-4523,1931-4531},
   MRCLASS = {14H70 (14H10 14H81 14N35 37K20)},
  MRNUMBER = {4585764},
MRREVIEWER = {Lihua\ Wu},
       DOI = {10.4310/cntp.2023.v17.n2.a1},
       URL = {https://doi.org/10.4310/cntp.2023.v17.n2.a1},
}

@article {LLLZ,
    AUTHOR = {Li, Jun and Liu, Chiu-Chu Melissa and Liu, Kefeng and Zhou,
              Jian},
     TITLE = {A mathematical theory of the topological vertex},
   JOURNAL = {Geom. Topol.},
  FJOURNAL = {Geometry \& Topology},
    VOLUME = {13},
      YEAR = {2009},
    NUMBER = {1},
     PAGES = {527--621},
      ISSN = {1465-3060,1364-0380},
   MRCLASS = {14N35 (53D45)},
  MRNUMBER = {2469524},
MRREVIEWER = {Hsian-Hua\ Tseng},
       DOI = {10.2140/gt.2009.13.527},
       URL = {https://doi.org/10.2140/gt.2009.13.527},
}

@article {JianZhou,
    AUTHOR = {Zhou, Jian},
     TITLE = {Hodge integrals and integrable hierarchies},
   JOURNAL = {Lett. Math. Phys.},
  FJOURNAL = {Letters in Mathematical Physics},
    VOLUME = {93},
      YEAR = {2010},
    NUMBER = {1},
     PAGES = {55--71},
      ISSN = {0377-9017,1573-0530},
   MRCLASS = {14N35 (37K10 37K20)},
  MRNUMBER = {2661523},
MRREVIEWER = {Yunfeng\ Jiang},
       DOI = {10.1007/s11005-010-0397-1},
       URL = {https://doi.org/10.1007/s11005-010-0397-1},
}

\end{document}